\newcommand{\rrvert}{\vert}
\newcommand{\llvert}{\vert}
\def\aligned{}
\def\endaligned{}
\def\cal{\mathcal}
\newcommand{\eqref}[1]{(\ref{#1})}
\newtheorem{thmm}{Theorem}[section]
\newtheorem{cor}[thmm]{Corollary}
\newtheorem{lemma}[thmm]{Lemma}
\newtheorem{lemmas}{Lemma}[section]
\newtheorem{prop}[thmm]{Proposition}
\newtheorem{claim}[thmm]{Claim}
\newcommand{\var}{\operatorname{Var}}
\newcommand{\zz}[1]{\mathbb{#1}}
\newcommand{\la}{\langle}
\newcommand{\ra}{\rangle}
\def\sA{{\cal A}}
\def\sC{{\cal C}}
\def\sF{{\cal F}}
\def\sG{{\cal G}}
\def\sM{{\cal M}}
\def\bR{\mathbb{R}}
\def\to{\rightarrow}
\def\Lto{\Longrightarrow}
\def\wt{\widetilde}
\def\vep{\varepsilon}
\def\phi{\varphi}
\def\lam{{\lambda}}
\def\ol{\overline}
\def\wkc{ {\buildrel w \over\to} }
\def\indic{{\mathbf{1}}}
\def\ol{\overline}
\def\ul{\underline}
\begin{document}
\begin{frontmatter}

\title{A phase transition for measure-valued SIR epidemic processes}
\runtitle{A phase transition for measure-valued SIR}

\begin{aug}
\author[a]{\fnms{Steven P.} \snm{Lalley}\ead[label=e1]{lalley@galton.uchicago.edu}\thanksref{t1}},
\author[b]{\fnms{Edwin A.} \snm{Perkins}\ead[label=e2]{perkins@math.ubc.ca}\thanksref{t2}}
\and
\author[c]{\fnms{Xinghua} \snm{Zheng}\corref{}\ead[label=e3]{xhzheng@ust.hk}\thanksref{t3}}
\thankstext{t1}{Supported in part by NSF Grant DMS-11-06669.}
\thankstext{t2}{Supported in part by an NSERC Discovery grant.}
\thankstext{t3}{Supported in part by NSERC (Canada) and GRF
606010 of the HKSAR.}
\runauthor{S.~P. Lalley, E.~A. Perkins and X.~Zheng}
\affiliation{University of Chicago, University of British Columbia
and\break
Hong Kong University of Science and Technology}
\address[a]{S.~P. Lalley\\
Department of Statistics\\
University of Chicago\\
Chicago, Illinois 60637\\
USA\\
\printead{e1}}

\address[b]{E.~A. Perkins\\
Department of Mathematics\\
University of British Columbia\\
Vancouver\\
BC V6T 1Z2 Canada\\
\printead{e2}}

\address[c]{X.~Zheng\\
Department of Information Systems,\\
\quad Business Statistics and Operations Management\\
Hong Kong University of Science and Technology\\
Clear Water Bay, Kowloon\\
Hong Kong\\
\printead{e3}}
\end{aug}

\received{\smonth{11} \syear{2011}}
\revised{\smonth{2} \syear{2013}}

%
\begin{abstract}
We consider measure-valued processes $X=(X_t)$ that solve the
following martingale problem: for a given initial measure $X_{0}$, and for
all smooth, compactly supported test functions $\varphi$,
\begin{eqnarray*}
X_t(\phi)&=& X_{0} (\varphi)+\frac{1}{2}\int
_0^t X_s( \Delta\phi) \,ds +
\theta \int_0^t X_s(\phi) \,ds\\
&&{} -
\int_0^t X_s(L_s
\phi) \,ds + M_t(\phi).
\end{eqnarray*}
Here $L_s(x)$ is the local time density process associated with $X$,
and $M_t(\phi)$ is a martingale with quadratic variation
$[M(\phi)]_t=\int_0^t X_s( \phi^2) \,ds$. Such processes arise as
scaling limits
of SIR epidemic models. We show that there exist critical
values $\theta_c(d)\in(0,\infty)$ for dimensions $d=2,3$ such that if
$\theta>\theta_c(d)$, then the solution survives forever with positive
probability, but if $\theta<\theta_c(d)$, then the solution dies out
in finite time with probability~1. For $d=1$ we prove that the
solution dies out almost surely for all values of $\theta$. We also
show that in dimensions $d=2,3$ the process dies out locally almost
surely for any value of $\theta$; that is, for any compact set $K$,
the process $X_{t} (K)=0$ eventually.
\end{abstract}


\begin{keyword}[class=AMS]
\kwd[Primary ]{60H30}
\kwd{60K35}
\kwd[; secondary ]{60H15}
\end{keyword}
\begin{keyword}
\kwd{Spatial epidemic}
\kwd{Dawson--Watanabe process}
\kwd{phase transition}
\kwd{local extinction}
\end{keyword}

\end{frontmatter}

\tableofcontents[level=2]

\section{Introduction}
\subsection{Epidemic models and their continuum
limits}\label{sseck-m} The use of stochastic processes to model
epidemics can be traced to \citet{McK1926} and \citet{KM27}, who
proposed a
simple continuous-time, mean-field model of an \emph{SIR} (for
\emph{susceptible-infected-removed} or
\emph{susceptible-infected-recovered}) epidemic. The corresponding
discrete-time model [known variously as the \emph{Reed--Frost} or the
\emph{chain-binomial} model---see \citet{daley-gani1999} for
background]
was proposed several years later, in 1928, by Reed and Frost in
lectures at Johns Hopkins University. In these models, an
\emph{infected} individual remains infected for a certain period of
time, during which he/she can transmit the disease to
\emph{susceptible} individuals, and then \emph{recovers}, after which
he/she is immune to further infection. Both models are
\emph{mean-field} models: the rate of infection transmission is the
same for all pairs of infected and susceptible members of the
population. The Reed--Frost model is of particular interest not only
because of its use in modeling epidemics and epidemic-like processes
but because of its close relation to the \emph{Erd\"{o}s--Renyi random
graph} model. In particular, given a realization of an Erd\"{o}s--Renyi
graph, whose vertices are marked either $S$ or $I$, a realization of
the Reed--Frost process can be obtained by defining $I_{n}$, the
infected set at time $n$, to be the set of all vertices at (graph)
distance $n$ from the set $I$. The union of the connected components of the
Erd\"{o}s--Renyi graph that contain vertices in the set $I=I_{0}$
consists of all individuals ever infected during the course of the
epidemic.

Spatial versions of the above models have a rich history in both the
mathematical
and biological literature. \citet{ba67} considered a spatial version of
the Reed--Frost model, and \citet{mo77} is a good source of information
about a range of related stochastic spatial models.
\citet{cd88} prove a shape theorem for a related continuous
time/discrete space model in two dimensions which is clearly similar in
spirit to our main theorems below on survival and local extinction for
a continuum model in two and three dimensions.

The \emph{SIR} models differ qualitatively from \emph{SIS} and
\emph{SIRS} models, such as the \emph{stochastic logistic} model, in
that the progress of the epidemic depends on an exhaustible resource
which is gradually consumed. This leads to interesting critical
behavior, as was discovered by \citet{martin-lof1998} and
\citet{aldous1999}. Martin-L\"{o}f proved, in particular, that at
criticality (when the probability of transmission from an infected to
a susceptible individual is $p=p_{c}=1/N$, where $N$ is the size of
the population), then as $N \rightarrow\infty$, after suitable
scaling, the total number of individuals ever infected converges in
law to the first passage time of a Wiener process to a parabolic
boundary. \citet{dolgoarshinnykh-lalley-2006} subsequently showed that
for suitable initial conditions the Kermack--McKendrick epidemic
process, after rescaling, converges weakly as $N \rightarrow\infty$
to a continuous-time process $I=(I_{t})$ that satisfies the stochastic
differential equation
\begin{eqnarray}
\label{eqdolgoarshinnykh-lalley} dI_{t}&= &(\lambda
I_{t}-I_{t}R_{t}) \,dt +\sqrt{I_{t}}
\,dW_{t}, \qquad\mbox{where}
\nonumber
\\[-8pt]
\\[-8pt]
\nonumber
\nonumber
dR_{t}&=&I_{t} \,dt.
\end{eqnarray}
The proof can easily be adapted to show that the Reed--Frost process
has the same limit. The parameter $\lambda\in\zz{R}$ represents the
\emph{transmission rate} of the disease: it is related to the
infected-susceptible transmission probability $p$ in the Reed--Frost
model by $p=1/N +\lambda/N^{4/3}$. It is not difficult to see (using
well-known facts about Feller's diffusion) that for any value of
$\lambda$ the process $I_{t}$ defined by
\eqref{eqdolgoarshinnykh-lalley} is eventually absorbed at $0$.

The subject of this paper is a stochastic partial differential
analogue of the system \eqref{eqdolgoarshinnykh-lalley} that arises
as a scaling limit of a spatial version of the Reed--Frost process
proposed by \citet{lalley09} as a crude model for an epidemic in a
geographically stratified population. In this model, populations of
size $N$ (``villages'') are located at each lattice point of
$\zz{Z}^{d}$; the rules of transmission are the same as in the
Reed--Frost model, \emph{except} that infectious contacts are permitted
only for infected-susceptible pairs in the same or neighboring
villages. (The model is described in more detail in Section~\ref{secdiscreteSIR} below.) Large-population ($N \rightarrow
\infty$) limit theorems for near-critical versions of these spatial
$\mathit{SIR}$ processes were proved for $d=1$ in \citet{lalley09} and for
$d=2,3$ in \citet{lz10}. The limit processes are now {continuous} finite
measure-valued processes $X=(X_t)_{t\geq0}$; for each time $t$, the
random measure $X_{t}$ represents the infected set (more precisely,
its distribution in space), and $R_{t}=\int_0^t X_s \,ds$ the recovered
set. The dynamics of the model are specified by the following
martingale problem. For any Radon measure $\mu$ on $\zz{R}^{d}$ and
any integrable or nonnegative measurable function $\phi\dvtx \zz{R}^{d}
\rightarrow\zz{R}$, write $\mu(\phi)$ or $\langle\mu,\phi\rangle$
for the integral $\int\phi \,d\mu$. Then for any initial mass
distribution $X_{0}=\mu$ and any test function $\varphi\in C^{2}_{c}
(\zz{R}^{d})$,
%
%
\begin{eqnarray}
\label{eqDWmgSIR} X_t(\phi)&=&\mu(
\phi)+ \frac{1}{2}\int_0^t
X_s( \Delta\phi) \,ds + \theta\int_0^t
X_s(\phi) \,ds
\nonumber
\\[-8pt]
\\[-8pt]
\nonumber
&&{}- \int_0^t
X_s (L_s\phi) \,ds + M_t(\phi).
\end{eqnarray}
Here $C^2_c(\zz{R}^d)$ stands for the space of compactly supported
twice differentiable with continuous second derivative functions on
$\zz{R}^d$,
$M_t(\phi)$ is a continuous martingale with quadratic variation
$[M(\phi)]_t=\int_0^t X_s(\phi^2) \,ds$ and $L_t(x)$ is the Sugitani
\emph{local time density process} of $X$, that is, for each $t\geq0$
the function $L_{t} (x)$ is the density of the occupation measure
$R_{t}$. [Throughout this article, unless otherwise specified, the
martingale $M_{t} (\phi)$ in a martingale problem such as \eqref{eqDWmgSIR} will be a martingale relative to the minimal
right continuous filtration of the process $X$, that is,
$\sF_t^X:=\bigcap_{u>t} \sigma(X_s,s\leq u)$]. Dawson's Girsanov formula
(Section~\ref{secgirsanov} below) implies that on a suitable
probability space there exists a solution to \eqref{eqDWmgSIR},
that solutions are unique in law, and that the law is absolutely
continuous on finite time intervals with respect to the law of
super-Brownian motion; see the definition below in
Section~\ref{ssecsuppression}. However, because the Sugitani local
time process
$L_{t}$ depends on the entire past of the spatial epidemic $X$,
solutions $X$ will not generally be Markov [although the
vector-valued process $(X_{t},L_{t})$ will be]. Henceforth, we shall
call a measure-valued processes $X$ satisfying \eqref{eqDWmgSIR} a
\emph{spatial epidemic} process with \emph{transmission rate} $\theta
$ and \emph{initial mass distribution} $\mu$.

The martingale problem \eqref{eqDWmgSIR} is a natural spatial
analogue of the stochastic differential equation
\eqref{eqdolgoarshinnykh-lalley}. In both problems, the key
qualitative feature is a ``resource depletion'' term: in
\eqref{eqdolgoarshinnykh-lalley}, it is the integral $\int_{0}^{t}
I_{s}R_{s} \,ds$, whereas in the martingale problem
\eqref{eqDWmgSIR} it is the integral $\int_{0}^{t}
X_s (L_s\phi) \,ds$. It seems likely that processes $X$
governed by \eqref{eqDWmgSIR}---or similar equations incorporating
depletion terms---should also arise as continuum limits of models for
various other physical (combustion), chemical (reaction--diffusion),
and biological processes (foraging) in which there is an exhaustible
resource upon which the process depends. In fact, \citet{MT11} have
suggested (see their Remark at the end of Section~6) that they
should also occur as scaling limits of certain stochastic
reaction--diffusion systems.

\subsection{Main results: Survival}\label{ssecsurvival}
A~measure-valued process  $X$ \emph{survives} if\break $X_t(1)>0$ for all
$t>0$; it \emph{dies out}, or becomes \emph{extinct}, if
$X_t=0$ for large enough $t$. For processes governed by equation
\eqref{eqDWmgSIR}, the question of whether or not there is survival
or extinction is of fundamental importance. \citet{MT11} (see again the
Remark at the end of Section~6) have
conjectured that there is a critical value $\theta_{c}=\theta_{c}
(d)\in(0,\infty)$ for the transmission rate below which extinction is
certain and above which survival has positive probability. Our main
result states that under a mild restriction on the initial measure
$\mu$ this is true in dimensions $d=2$ and $d=3$, but that in $d=1$
extinction is certain at all values of the parameter $\theta$. The
restriction on the initial measure is as follows:

\begin{asmptn}\label{asmtpinireg1} The measure $\mu$ has compact
support and finite total mass, and when $d=2$ or $3$, its convolution
$\mu*q_{t}$ with
the integrated Gauss kernel
%
%
\begin{equation}
\label{eqheatKernel} q_t(x)=\int_0^t
p_s(x) \,ds,\qquad \mbox{where } p_{t} (x) =\frac{e^{-|x|^{2}/2t}}{(2\pi t)^{d/2}},
\end{equation}
is jointly continuous in $(t,x)\in[0,\infty)\times\zz{R}^d$.
\end{asmptn}

Theorem 2 of \citet{Sugitani89} asserts that in dimensions $d=2$ and
$d=3$ a super-Brownian motion with initial mass distribution
$\mu$ satisfying Assumption \ref{asmtpinireg1} has a {local time
density process} $L_{t} (x)$ that is jointly continuous in $t\geq0$
and $x\in\zz{R}^{d}$. Since the law of a spatial epidemic $X$ is
absolutely continuous relative to that of super-Brownian motion,
spatial epidemics must also have jointly continuous local time
processes in $d=2,3$. In dimension $d=1$, the existence and continuity
of the local time process follows from the fact that the state of a
super-Brownian motion at any time $t$ is absolutely continuous with
respect to Lebesgue measure, with a
jointly continuous density. Thus,
equation \eqref{eqDWmgSIR} makes sense in all dimensions $d\leq
3$, and so henceforth we assume that $d\leq3$.

\begin{thmm}\label{thmmmain}
There exist critical values $\theta_{c}=\theta_{c} (2)>0$ and
$\theta_{c}=\theta_{c} (3)>0$ such that the following is true:\vadjust{\goodbreak} if $d=2$
or $d=3$, and $X$ is a spatial epidemic process
in $\zz{R}^{d}$ with transmission rate $\theta$ and initial mass
distribution $\mu$ satisfying Assumption \ref{asmtpinireg1}, then:
\begin{longlist}[(a)]
\item[(a)] if $\theta<\theta_{c}$, then $X$ dies out almost surely, but
\item[(b)] if $\theta>\theta_{c}$, then $X$ survives with positive
probability.
\end{longlist}
If $X$ is a spatial epidemic in $\zz{R}^{1}$ with any transmission
rate $\theta$ and any finite initial mass distribution $\mu$, then
$X$ dies out almost surely.
\end{thmm}

Thus, in dimensions $2$ and $3$ a spatial epidemic can survive if the
transmission rate is sufficiently high. However, since the process
feeds on a substrate which is gradually consumed in infected areas, it
is natural to conjecture that the epidemic should survive in a transient
wave which sweeps through space. The following result partly
establishes the validity of this picture.

\begin{thmm}\label{thmmlocext} Let $X$ be a spatial epidemic with
arbitrary transmission rate $\theta\in\zz{R}$ and initial mass
distribution satisfying Assumption \ref{asmtpinireg1}. For any
compact set $K\subset\zz{R}^{d}$, with probability one,
%
%
\begin{equation}
\label{locext} X_{t} (K)=0\qquad \mbox{eventually.}
\end{equation}
\end{thmm}

Consequently, with probability one the local time $L_{t} (x)$ at any
point $x$ is eventually constant. Since the local time $L_{t} (x)$ is
jointly continuous in its arguments, it follows that
$L_\infty(x):=\lim_{t \rightarrow\infty}L_{t} (x)$ is finite and
continuous in $x$ almost surely.

\subsection{Proof strategy and heuristics}\label{ssecheuristics}
The proofs of Theorems \ref{thmmmain}--\ref{thmmlocext} are rather
technical, largely because of difficulties that will arise in carrying
out comparison arguments for measure-valued processes defined by
stochastic partial differential equations in which the entire
histories of the solutions (e.g., local time density) influence the
coefficients. However, the ideas behind the results can be explained,
at least roughly, in simple terms. Consider first the assertion of
global extinction in one dimension. If the epidemic process $X$
were to survive with positive probability, then on this event its
total mass $X_t(1)$ would diverge to $\infty$, since otherwise the
process would be presented with infinitely many opportunities to
become extinct; see Lemma \ref{lemmasurvchar}. In addition, by a
large deviations calculation on a dominating super-Brownian motion
with drift $\theta$ [see \citet{Pinsky95}], there exists $c<\infty$
such that $\operatorname{Supp}(X_t)\subset[-ct,ct]^d$ for all large $t$. Therefore,
for $d=1$, on the event of survival and for large $t$, the average
value of $L_t(\cdot)$ must satisfy
\[
(2ct)^{-1}\int_{-ct}^{ct}L_t(x)
\,dx=(2ct)^{-1}\Vert L_{t} \Vert_{1}=(2ct)^{-1}
\int_0^tX_s(1) \,ds\to\infty.
\]
If \eqref{eqDWmgSIR} were valid for the function $\varphi\equiv1$
(it is only assumed for compactly supported functions), then it would
follow that for
large $t$ the drift term in \eqref{eqDWmgSIR} for the total mass
$X_{t} (1)$ would eventually turn (very) negative, making it
impossible for $X_{t} (1)$ to remain positive. The formal proof in
Section~\ref{sec1dext} makes this heuristic argument precise.

A local variation of this argument (which is harder to justify rigorously---see Section~\ref{seclocalExtinction}) explains the strong local extinction
asserted in Theorem \ref{thmmlocext}. We will show that in order for
$X_{t} (K)>0$ to occur at indefinitely large times, for some
ball $B$
centered at the origin, it must be the case that $X_{t} (3B)$
integrates to $\infty$. This, however, would imply that the local time in
$2B\setminus B$ would grow indefinitely, eventually making the drift
in the equation \eqref{eqDWmgSIR} for $X_{t} (B)$ negative.

A different line of argument makes it at least plausible that in
dimensions $d\geq2$ the epidemic $X$ might survive with positive
probability when the transmission rate $\theta$ is sufficiently
large. If $\theta$ is large, then equation \eqref{eqDWmgSIR}
implies that when the infection first enters a region $K$ of space it
will, at least for a while, grow at least as fast as a super-Brownian
motion with a large constant drift. Thus, with high probability, the
total mass $X_{t} (K)$ will become large long before the local time
$L_{t}$ becomes appreciable in $K$. In particular, for a cube $K$, if
the size $X_{t} (K)$ of the infected set reaches a certain threshold
before the local time exceeds a fraction of this level, then the
epidemic will have high probability of spreading to neighboring cubes
quickly, and the infection in these cubes will have similarly high
probability of spreading to neighboring cubes, and so on. Since
high-density oriented site percolation in dimensions $d\geq2$ has
infinite clusters, with positive probability, it should then follow
that the epidemic will reach infinitely many cubes with positive
probability. It will take some work to implement this plan. This is
done in Section~\ref{secsurv} after some important groundwork is laid
in Sections~\ref{secpreliminariesepi} and
\ref{secpreliminariessbm}.

For the extinction assertion of Theorem \ref{thmmmain} we will adapt the
corresponding argument of \citet{MT94}. For small
$\theta>0$ it is possible to rescale $X$ so that the total mass
process can be dominated by a subcritical branching process which dies
out. The actual implementation of this idea is carried out in a
slightly different manner in Section~\ref{secext}; see
Proposition \ref{eqnUscaled}. A key observation, used here and
elsewhere in this work, is that if the initial state is split up into
pieces, then one can couple the epidemics so that the survival
probability is dominated by the sum of the survival probabilities
corresponding to the pieces; see Lemma \ref{lemmacompmu}.

\subsection{Epidemics with suppression}\label{ssecsuppression}

Our results extend to a somewhat larger class of measure-valued
processes that incorporate location-dependent \emph{local
suppression}. Let $K\dvtx \zz{R}^{d}\rightarrow\zz{R}_{+}$ be a bounded,
continuous (or, more generally, piecewise continuous), nonnegative
function; call this the \emph{suppression rate.} A \emph{spatial
epidemic with local suppression rate} $K$, \emph{transmission rate}
$\theta$, \emph{branching rate} $\gamma>0$ and \emph{inhibition
parameter} $\beta\geq0$ is a solution to the
martingale problem $(\mathrm{MP})_{\mu,K}^{\theta,\beta,\gamma}$
%
%
\begin{eqnarray}
\label{eqDWmgSIRgen}\quad X_t(\phi) =\mu(\phi)+\int_0^t
X_s \bigl( \Delta\phi/2 +\theta\phi-K\phi-\beta
L_s(X)\phi \bigr) \,ds+\sqrt{\gamma} M_t(\phi),
\nonumber
\\[-8pt]
\\[-8pt]
\eqntext{\phi\in
C^2_c \bigl(\bR^d \bigr),}
\end{eqnarray}
where $X$ is a continuous finite measure-valued process, $M_{t}
(\varphi
)$ is a continuous martingale with quadratic
variation $[M(\phi)]_t=\int_0^t X_s(\phi^2) \,ds$ and $L_{t} (x)$ is
the local time density of $X_{t}$. When $K\equiv0$ and $\beta
=\theta=0$ and $\gamma=1$, a process $X$ satisfying~\eqref{eqDWmgSIRgen} is a \emph{super-Brownian motion}; more
generally, when $K\equiv0$ and $\beta=0$, it is a
\emph{super-Brownian motion with drift} $\theta$ and branching
rate $\gamma$; and when $\beta=0$ it is a \emph{super-Brownian motion
with killing}, with local killing rate $K$, drift $\theta$ and
branching rate $\gamma$. Theorems \ref{thmmmain} and \ref{thmmlocext}
extend to all processes governed by \eqref{eqDWmgSIRgen} with
$\beta>0$; the critical values $\theta_{c}$ will depend on the
parameters, but not on the suppression rate function $K$ if we
restrict $K$ to be compactly supported. In the interests of
simplicity we shall prove our main results only in the case $K\equiv
0$. However, solutions of the martingale problem
\eqref{eqDWmgSIRgen} will be needed in the proofs of the main
results even in the special case $K\equiv0$ as they will arise
naturally in the Markov property of solutions to \eqref{eqDWmgSIR}.

\subsection{Relations with scaling laws for contact processes}\label
{ssecrelations} As
noted above, in $\mathit{SIR}$ epidemics, unlike $\mathit{SIS}$ and $\mathit{SIR}S$ epidemics,
the population of susceptible individuals is gradually depleted during
the course of the epidemic. It is this that accounts for the depletion
term $-\int_{0}^{t} X_s (L_s\phi) \,ds$ in the martingale
problem \eqref{eqDWmgSIR}, which in turn is responsible for the
local extinction asserted in Theorem \ref{thmmlocext}. Spatial models
of $\mathit{SIS}$ and $\mathit{SIR}S$ lead to measure-valued processes with different
qualitative behavior. One such model that has been studied in some
detail is the \emph{long-range contact process}; cf. \citet{BDS89},
\citet{MT94}, \citet{DP99}. In this model, only one individual, who
can be either infected or susceptible, inhabits each lattice point,
but infectious contact is allowed at distances up to $L$ (usually the
$\ell_{\infty}$ metric is used). Scaling limits were obtained for the
limiting regime $L \rightarrow\infty$; see \citet{MT95} and
\citet{DP99} for details. \citet{BDS89} determined the long-range
functional dependence of the critical value $\lambda_c(L)$ on $L$ (but
not the precise constants): in dimension $d=1$, they showed that for
large $L$,
%
%
\begin{equation}
\label{LRC} 0<cL^{-2/3}\le\lambda_c(L)-1\le
CL^{-2/3}.
\end{equation}
The long-conjectured (but still unresolved) link between the discrete
and continuum settings in $d=1$ is that
%
%
\begin{equation}
\label{LRconj1} \lambda_c(L) -1 \sim\theta_cL^{-2/3}.
\end{equation}

\citet{DP99} established weak convergence of long-range contact
processes to a super-Brownian motion in dimensions\vadjust{\goodbreak} $d\geq2$, while for
$d=1$ \citet{MT95} showed that the scaling limit of the long-range
contact process is governed by the
stochastic PDE
%
%
\begin{equation}
\label{MTC}\frac{\partial u}{\partial t}=\frac
{u''}{6}+\theta u-u^2+
\sqrt{2u} \dot W.
\end{equation}
In this equation the local time density in the third integral of
\eqref{eqDWmgSIR} is replaced by the density, $u_t$, of
$X_{t}$. This reflects the fact that in the contact (and other $\mathit{SIS}$)
processes, the susceptible population is depleted
locally by the current size of the infected set. The results of
\citet{DP99} show this effect induces a killing term with a known
constant rate.
\citet{MT94} showed that
there is a phase transition in equation \eqref{MTC} in that there is
positive probability of survival for $\theta$ above a
critical $\theta_c>0$ and a.s. extinction below it. By contrast, the
martingale problems \eqref{eqDWmgSIR} have solutions in up to $3$
dimensions, whereas
\eqref{MTC} only makes sense in one spatial dimension (since
super-Brownian motion has the property that the mass distributions
$X_{t}$ at positive times are absolutely continuous only in dimension
$1$).

The discrete $\mathit{SIR}$ models underlying our continuous models are
described in Section~\ref{secdiscreteSIR} below. The analogue to
(1.6) in this
discrete setting is also described in Section~\ref{secdiscreteSIR}.

\subsection{Plan of the paper}\label{ssecplan} The remainder of the
paper is devoted to the proofs of Theorems
\ref{thmmmain}--\ref{thmmlocext}. The plan is as follows. In
Section~\ref{secpreliminariesepi} we discuss existence and
uniqueness of solutions to a class of martingale problems including
\eqref{eqDWmgSIR}, weak convergence of certain discrete processes
to spatial epidemics, and basic comparison principles. In
Section~\ref{secpreliminariessbm} we discuss some regularity
properties of (supercritical) super-Brownian motions and their local
time densities. In Sections~\ref{sseccvuniversal} and
\ref{sec1dext} we prove that the critical values $\theta_{c}$ in
dimensions $2$ and $3$ do not depend on the initial mass
distributions, and we prove that spatial epidemics in $\zz{R}^{1}$ die
out almost surely at all values of the transmission rate $\theta$. In
Section~\ref{secsurv} we prove that spatial epidemics in dimensions
$2$ and $3$ can survive if the transmission rate $\theta$ is
sufficiently high; and in Section~\ref{secext} we prove that at low
values of $\theta$ extinction is certain. We prove a weak form of
local extinction in Section~\ref{ssecweakLocalExtinction} and
finally, in Section~\ref{seclocalExtinction}, we prove
Theorem \ref{thmmlocext}.

\textit{Standing notation}.
For any $a\geq0$, $[a]$ stands for its integer part.
For any Borel subset $D\subseteq\zz{R}^{d}$, let $\mathcal{M} (D)$ be
the space of finite
Borel measures on $D$, equipped with the weak topology, and let
$\mathcal{M}_{c} (D)$ be the subset consisting of all measures with
compact support in $D$. These spaces are partially ordered in a natural
way: we write $\mu\leq\nu$ to mean that for all nonnegative, bounded
functions~$\varphi$,
\[
\int\varphi \,d\mu\leq\int\varphi \,d\nu.
\]
For a measure $\mu\in\mathcal{M} (D)$ and a nonnegative measurable
function $f\dvtx D \rightarrow\zz{R}_{+}$, we shall continue to use the shorthand
notation $\mu(f)$ or $\langle\mu,f \rangle$ to denote the integral
of $f$
against $\mu$ and also write $|\mu|$ for $\mu(1)$, the total mass
of $\mu$.
Let $C_b(\zz{R}^d)$
be the space of bounded and continuous functions on $\zz{R}^d$,
endowed with the sup-norm topology, and let $C_c(\zz{R}^d)$
be the space of compactly supported continuous functions on $\zz{R}^d$.
Furthermore, for any $x=(x_1,\ldots,x_d)\in\zz{R}^d$ and any $r>0$, let
$Q_r(x)=[x_1-r/2,x_1+r/2) \times\cdots\times[x_d-r/2,x_d+r/2)$ be the
(half-closed, half-open) cube of side length $r$ centered at $x$, and,
for notational ease,
$Q(x):=Q_{1}(x)$. Finally, let $C_p(\zz{R}^d,\zz{R}_+)$ be the space of
nonnegative piecewise constant functions on $\zz{R}^d$ satisfying the
following conditions: each such function is supported by $\bigcup_i
Q(x_i)$ for finitely many $x_i\in\zz{Z}^d$ and is constant on each
cube.

\textit{Conventions}. Throughout
the paper, $C, c, C_1$, \mbox{etc.} denote generic constants whose
values may change from line to line. The notation
$Y_{n}=o_{P} (f (n))$ means that $Y_{n}/f (n) \rightarrow0$ in
probability; and $Y_{n}=O_{P} (f (n))$ means that the sequence
$|Y_{n}|/f (n)$ is tight. Also, for any $a,b\in\zz{R}$,
$a\wedge b:=\min(a,b)$ and $a\vee b:=\max(a,b)$. Finally, we use
a ``local scoping rule'' for notation: any notation introduced in
a proof is local to the proof, unless otherwise indicated.

\section{Preliminaries on the epidemic processes}
\label{secpreliminariesepi}

\subsection{Dawson's Girsanov theorem; existence and
uniqueness}\label{secgirsanov}

Existence and uniqueness of solutions (in the weak sense) to a class
of martingale problems similar to \eqref{eqDWmgSIRgen} was
established in \citet{MT11} using Dawson's Girsanov theorem. Existence
in the special case $K\equiv0$, $\theta=0$ was also proved in
\citet{lalley09} and \citet{lz10} by weak convergence methods, which
extend trivially to the general case. Nevertheless, since Dawson's
Girsanov formula will be of crucial importance in many of the
arguments to follow, we begin by reviewing the essential facts. We
first state a variant of Dawson's Girsanov theorem [Theorem IV.1.6 in
\citet{PerkinsSFnotes}] tailored to our needs.

Let $\Omega=D([0,\infty);\mathcal{M}_{c}(\zz{R}^d))$ be the canonical
path space for compactly supported measure-valued processes, with
coordinate maps
$X_{t}\dvtx \Omega\rightarrow\mathcal{M}_{c} (\zz{R}^{d})$ and associated
filtration $\zz{F}=(\mathcal{F}^{X}_{t})_{t\geq0}$. Fix a
probability measure $P$ on $(\Omega,\mathcal{F}^{X}_{\infty})$, and
suppose that there is a linear mapping $\psi\mapsto( M_{t} (\psi)
)_{t\geq0}$ from the space $C^{2}_{c} (\zz{R}^{d})$ to the space of
$\zz{F}$-adapted, continuous martingales such that $M_{0} (\psi)=0$
and such that $M (\psi)$ has quadratic variation $[M
(\psi)]_{t}=\int_{0}^{t} \langle X_{s},\psi^{2} \rangle \,ds$. This mapping
extends to an \emph{orthogonal martingale measure} $dM (s,x)$; see
\citet{Walsh86}. For any previsible$\times$Borel process $B\dvtx \zz
{R}_{+}\times
\Omega\times\zz{R}^{d} \rightarrow\zz{R}$, we say that $B$ is
$L^{2}$-\emph{admissible} if
%
%
\begin{equation}
\label{eqB-admissible} \int_{0}^{t} \bigl
\langle X_{s},B_{s}^{2} \bigr\rangle \,ds <
\infty \qquad\mbox{for all }t\ge0\ P\mbox{-almost surely}.
\end{equation}
If $B$ is $L^{2}$-admissible, then the stochastic integrals
%
%
\begin{equation}
\label{eqWalshIntegral} \int_0^t\int
_{\zz{R}^d} B_{s} (x) \,dM (s,x)
\end{equation}
exist and constitute a continuous, $\zz{F}$-adapted local martingale
with quadratic variation process $\int_{0}^{t}\langle X_{s},B^{2}_{s}
\rangle
\,ds$.
Consequently, for each $\gamma>0$, the process
%
%
\begin{equation}
\label{eqGirsanovstd2} \aligned\mathcal{E}^B_t= \exp
\biggl( \frac{1}{\sqrt\gamma}\int_0^t\int
_{\zz{R}^d} B_{s} (x) \,dM(s,x) - \frac{1}{2\gamma}\int
_0^t \bigl\langle X_{s},B^{2}_{s}
\bigr\rangle \,ds \biggr) \endaligned
\end{equation}
is a continuous local martingale.

\begin{lemma}[(Dawson's Girsanov theorem)]\label{lemmaGirsanov}
Let $P$ be a probability measure on
$(\Omega,\mathcal{F}_{\infty})$ such that under $P$
the coordinate process $(X_{t})_{t\geq0}$
satisfies the following martingale problem: for some
uniformly bounded $L^{2}$-admissible integrand $A$, for all $\psi\in
C^{2}_{c} (\zz{R}^{d})$,
%
%
\begin{equation}
\label{eqGirX}\qquad X_t(\psi) = X_0(\psi) +
\frac{\alpha}{2}\int_0^t \langle
X_{s}, \Delta\psi \rangle \,ds + \int_0^t
\langle X_{s},A_{s} \psi \rangle \,ds+ \sqrt\gamma
M_t^P(\phi),
\end{equation}
where $M_t^P(\psi)$ is a continuous $\mathcal{F}^X_t$-martingale
with quadratic variation\break  $ [M^P(\psi)]_t=\int_0^t X_s(\psi^2) \,ds$.
\begin{longlist}[(a)]
\item[(a)] Suppose that $Q$ is another probability measure
on $(\Omega,\mathcal{F}^X_{\infty})$ such that under $Q$ the coordinate
process $(X_{t})_{t\geq0}$ satisfies the martingale problem
%
%
\begin{eqnarray}
\label{eqGirY} X_t(\psi)& =& X_0(\psi) +
\frac{\alpha}{2}\int_0^t\langle
X_{s}, \Delta\psi \rangle \,ds + \int_0^t
\bigl\langle X_{s},(A_{s}+B_{s}) \psi \bigr
\rangle \,ds
\nonumber
\\[-8pt]
\\[-8pt]
\nonumber
&&{} + \sqrt \gamma M_t^Q(\psi)
\end{eqnarray}
for all $\psi\in C^{2}_{c} (\zz{R}^{d})$, where
$B$ is a uniformly bounded $L^{2}$-admissible integrand,
and $M_t^Q(\psi)$ is a
continuous martingale (under $Q$) with quadratic variation $
[M^Q(\psi)]_t=\int_0^t X_s(\psi^2) \,ds$.\vspace*{1pt} Suppose also that the
restrictions of $P$ and $Q$ to the $\sigma$-algebra
$\mathcal{F}^{X}_{0}$ are equal. Then for each $t<\infty$ the measures
$P$ and $Q$ on $\mathcal{F}^X_{t}$ are mutually absolutely continuous,
with likelihood ratio
%
%
\begin{equation}
\label{eqGirsanovstd} \frac{dQ}{dP} \bigg\rrvert _{\mathcal{F}^X_t} =
\mathcal{E}^B_t.
\end{equation}
In particular, $Q$ is uniquely determined on $\mathcal{F}^X_{\infty}$
by the martingale
problem~\eqref{eqGirY}.

\item[(b)] Conversely, if $Q$ is the probability measure
determined by the likelihood ratios \eqref{eqGirsanovstd}, then
under $Q$ the process $X$ satisfies the martingale problem
\eqref{eqGirY}.
\end{longlist}
\end{lemma}

We next apply the above to prove that the martingale problem
\eqref{eqDWmgSIRgen} is well-posed. Recall that for each $x\in\zz
{R}^d$, $Q_r(x)$ stands for the cube of side length $r$ centered at\vadjust{\goodbreak}
$x$, and $Q(x)=Q_{1}(x)$. For any continuous path $X_{t}$ valued in
$\mathcal{M}_{c} (\zz{R}^{d})$, define
\[
L(t,X,x)=L_t(X,x)=L^{X}_{t} (x) = \limsup
_{\varepsilon\downarrow0}\frac{\int_0^t
X_s(Q_\varepsilon(x)) \,ds}{\varepsilon^d}.
\]
When there is no confusion, we shall suppress the dependence on $X$
and abbreviate $L_t(X,x)$ as $L_t(x)$. If $X_{t}$ is an adapted
process on the filtered space $(\Omega, \zz{F}) $, then $L (t,X,x)$ is
nonnegative, nondecreasing in $t$, and $\mathcal{P}\times
\mathcal{B}^{d}$-measurable, where $\mathcal{B}^{d}$ is the Borel
$\sigma$-field on $\zz{R}^d$, and $\mathcal{P}$ is the previsible
$\sigma$-field. {If $X$ has a local time density, $L(t,X,x)$ will be a
jointly measurable version of it.}

\begin{thmm}\label{thmmexistuniq}
Let $\mu\in\mathcal{M}_{c} (\zz{R}^{d})$ satisfy
Assumption \ref{asmtpinireg1}, and let $K\in
C_p(\zz{R}^d,\zz{R}_+)$. For any fixed $\theta\in\zz{R}$ and
$\gamma>0$, denote by $P_{\mu}=P^{\theta,0,\gamma}_{\mu,0}$ the
law of
a super-Brownian motion with initial mass distribution $\mu$, drift
$\theta$ and branching rate $\gamma$.
\begin{longlist}[(a)]
\item[(a)] If $X$ solves the martingale problem
\eqref{eqDWmgSIRgen} with initial value $X_{0}=\mu$,
then the law $P_{\mu,K}:=P^{\theta,\beta,\gamma}_{\mu,K}$ of $X$
on the
canonical path space is unique and given by
%
%
\begin{equation}
\label{eqRNformulaLT}\quad  \frac{dP_{\mu,K}}{dP_{\mu}}\bigg\rrvert _{\mathcal
{F}_t^X} =
\mathcal{E}^{B}_{t}, \qquad\mbox{where } B (s,\omega,x)= - \bigl(K
(x)+\beta L (s,X,x) \bigr),
\end{equation}
and $dM(s,x)$ is the orthogonal martingale measure under $P_{\mu}$.
Conversely, if $P_{\mu,K}$ is the probability measure specified by
\eqref{eqRNformulaLT}, then under $P_{\mu,K}$ the coordinate process
$X_{t}$ satisfies the martingale problem \eqref{eqDWmgSIRgen}.

\item[(b)] The mapping $(\mu,K)\mapsto P_{\mu,K}$ is jointly measurable
with respect to the appropriate Borel fields.

\item[(c)] Under $P_{\mu,K}$ the local time process $L_{t} (x)$ is jointly
continuous in $(t,x)$ and almost surely is the density of the
occupation measure $R_{t}=\int_{0}^{t}X_{s} \,ds$.

\item[(d)] Under the measure $P_{\mu,K}$ the process $(X,L)$ is strong
Markov, that is, for any $\mathcal{F}^X_t$-stopping time $\tau$,
\begin{eqnarray}
P_{\mu,K}(X_{\tau+\cdot}\in A|\mathcal{F}_\tau)
=P_{X_\tau,K+\beta L_\tau}(A) \nonumber\\
\eqntext{\mbox{almost surely on }\{ \tau <\infty\} \mbox{ for all }A\in\mathcal{F}^X_{\infty}.}
\end{eqnarray}

\item[(e)] For any pair $K,K'$ of suppression rate functions, the probability
measures $P_{\mu,K}$ and $P_{\mu,K'}$ are mutually absolutely
continuous on $\mathcal{F}^{X}_{t}$, with likelihood ratio
%
%
\begin{eqnarray}
\label{eqRNformulaLKK} \frac{dP_{\mu,K'}}{dP_{\mu,K}}\bigg\rrvert _{\mathcal{F}_t^X}
& =&
\exp \biggl\{\frac{1}{\sqrt\gamma}\int_0^t\int
\bigl(K(x)-K'(x) \bigr)\,dM_K(s,x)
\nonumber
\\[-8pt]
\\[-8pt]
\nonumber
&&\hspace*{19pt}{} -\frac{1}{2\gamma}\int_0^t\int
\bigl(K(x)-K'(x) \bigr)^2 X_s(dx)\,ds
\biggr\},
\end{eqnarray}
where $dM_K(s,x)$ is the orthogonal martingale measure under
$P_{\mu,K}$.\vadjust{\goodbreak}
\end{longlist}
\end{thmm}

\begin{remark}\label{remarkrandomInitialConditions}
Assertion (b) guarantees that if $X_{0}$ and $K_{0}$ are random and
$\mathcal{F}_{0}^X$-measurable, then the random probability measure
$P_{X_{0},K_{0}}$ is $\mathcal{F}_{0}^X$-measurable. Similarly, if
$X_{\tau}$ and $K_{\tau}$ are $\mathcal{F}_{\tau}^X$-measurable, then
$P_{X_{\tau},K_{\tau}}$ is $\mathcal{F}_{\tau}^X$-measurable. Moreover,
since $P_{X_{0},K_{0}}$ is a regular conditional distribution on the
canonical path space given $\mathcal{F}_{0}^X$, it follows from (d) that
the strong Markov property holds when the initial condition $X_{0}$
and the suppression rate function $K_{0}$ are random.
\end{remark}

\begin{remark}\label{remarkadmissibility}
Since the local time density $L_{t}$ is not uniformly bounded on finite time
intervals, the exponential process $\mathcal{E}^{B}_{t}$
is not a priori a martingale.
Part of the assertion of the
theorem is that in fact $\mathcal{E}^{B}_{t}$ \emph{is} a martingale,
and hence that \eqref{eqRNformulaLT} defines a probability measure
on $\mathcal{F}^{X}_{t}$.
\end{remark}

\begin{pf*}{Proof of Theorem \ref{thmmexistuniq}}
(a) First we claim that any solution $X$ to
martingale problem \eqref{eqDWmgSIRgen} has the property that its
local time density $L^{X}_{t} (x,\omega)$ is bounded in $(t,x)$ for
$t$ in finite intervals and for every $t<\infty$ has
compact support in $x$ for almost every $\omega$. This follows because
on some probability space a version of $X_{t}$ can be coupled with a
super-Brownian motion $\ol{X}_{t}$ with drift $\theta$ and branching
rate $\gamma$ such that $\ol{X}_t\ge X_t$ for all $t\ge0$
\mbox{almost surely}.
See Proposition IV.1.4 in
\citet{PerkinsSFnotes} which we apply with $D=0$, $C_t(\phi)=\int_0^t
X_s(L^X_s\phi) \,ds$, and
only to the first coordinate of the pair of processes considered there.
To apply the above result we need to show that $t\to C_t$ is a
continuous $\sM(\bR^d)$-valued process. For $\phi\in C_c^2(\bR^d)$,
$C_t(\phi)$ is continuous by the martingale problem. It is easy to
extend the martingale problem to $\phi=1$ by taking limits and the
continuity of $t\to C_t(1)$ follows. This establishes the required
continuity.
Since super-Brownian motion has a continuous
local time process with compact support in any finite time interval,
by Sugitani's theorem, it follows that the process $X$ also has
a local time density $L^{X}_t(x)$ with the advertised properties.

Unfortunately,
we cannot directly apply the previous lemma to conclude \eqref{eqRNformulaLT},
because $L_{t}^{X}$ is not
\emph{uniformly} bounded in $\omega$. To circumvent this
problem we use a localization argument. Fix $0<b<\infty$, and consider
the exponential process $\mathcal{E}^{B}_{t\wedge\tau(b)}$, where
\[
\tau(b)=\inf \Bigl\{t \dvtx \max_x\bigl |B_{t}(x)\bigr|
\geq b \Bigr\}.
\]
By Lemma \ref{lemmaGirsanov}, the process $\mathcal{E}^{B}_{t\wedge
\tau(b)}$ is a martingale, and so
under the probability measure $Q^{b}$
specified by equation
\eqref{eqRNformulaLT}
(with the stopped
exponential martingale as the likelihood ratio), the process $X$
satisfies the martingale problem~\eqref{eqDWmgSIRgen} with $K
(x)+\beta L (s,X,x)$ replaced by its stopped value. But the
preceding paragraph implies that for each $t$,
$Q^{b}( \tau(b)\leq t) \rightarrow0$ as $b \rightarrow
\infty$, that is, $\lim_{b \rightarrow\infty}E_{\mu} (\mathcal
{E}^{B}_{t\wedge\tau(b)}\indic_{ \tau(b)\leq t})=0$. Therefore,
\begin{eqnarray*}
E_{\mu} \bigl(\mathcal{E}^{B}_{t} \bigr) &\geq&
E_{\mu} \bigl(\mathcal{E}^{B}_{t}
\indic_{ \tau(b)> t} \bigr)=E_{\mu} \bigl(\mathcal{E}^{B}_{t\wedge
\tau(b)}
\indic_{ \tau(b)> t} \bigr)
\\
&=& E_{\mu} \bigl(\mathcal{E}^{B}_{t\wedge\tau(b)} \bigr) -
E_{\mu} \bigl(\mathcal{E}^{B}_{t\wedge\tau(b)}
\indic_{ \tau(b)\leq t} \bigr) \to1.
\end{eqnarray*}
On the other hand, by Fatou's lemma, $E_{\mu} (\mathcal
{E}^{B}_{t})\leq
1$, and so $E_{\mu} (\mathcal{E}^{B}_{t})=1$.
It follows that $\mathcal{E}^{B}_{t}$ is a martingale under $P_{\mu}$
and that under the probability measure defined by
\eqref{eqRNformulaLT} the process $X$ satisfies the martingale
problem~\eqref{eqDWmgSIRgen}.

(b, c) These are easy consequences of (a), the continuity
of $\mu\to P_{\mu}$, and Sugitani's theorem.

(d) It suffices to consider a
finite-valued $\tau$. By (c), the local time $L_{t}$ is the occupation
density of $X$ under $P_{\mu,K}$, so it follows that
\[
L_{\tau+t}(X,x)=L_\tau(X,x)+L_t(X_{\tau+\cdot},x)
\qquad\mbox{for all }(t,x) \mbox{ almost surely}.
\]
If $Q(\omega)$ is a regular conditional probability for
$X_{\tau+\cdot}$ given $\mathcal{F}_\tau$, then it follows easily from
this that \mbox{almost surely} under $Q(\omega)$ the coordinate
process satisfies the martingale problem \eqref{eqDWmgSIRgen} with
$K$ replaced by $K+L_{\tau}$. Therefore, by the uniqueness in law of
solutions, $Q(\omega)=P_{X_\tau(\omega),K(\omega)+\beta
L_\tau(\omega)}$ almost surely. The strong Markov property now
follows.

(e) This follows immediately from (a).
\end{pf*}

In the course of proving (a) we have also established the following:
%
\begin{prop}\label{propBRWenvelope}
Let $X$ be a solution of the martingale problem
\eqref{eqDWmgSIRgen} where $\mu$ and $K$ are as in Theorem \ref
{thmmexistuniq}. Then on some probability space, a~version of
$X$ can be coupled with a dominating super-Brownian motion $\overline
X$, with the same initial mass
distribution $\mu$,
and drift $\theta$, so that
$\overline{X}_{t}\geq X_{t}$ for all $t\ge0$ a.s. We will call
$\overline X$ the
\emph{super-Brownian motion envelope}.
\end{prop}

\begin{remark}
Lemma \ref{lemmaGirsanov} holds equally well on the
larger space of continuous $\mathcal{M}(\zz{R}^{d})$-valued paths
[as in
\citet{PerkinsSFnotes}]. The proof of Theorem \ref{thmmexistuniq}
also holds on this larger space if one starts with compactly
supported initial conditions. That is, the solutions necessarily
have compact supports for all $t$ by the domination in (a). This
slightly strengthens the uniqueness part and may be used
implicitly below without further comment. The main reason for
restricting to compactly supported measures is the use of
Proposition \ref{propSIRconv} below in the proof of our main result
Theorem \ref{thmmmain}.
\end{remark}

\subsection{Discrete epidemic models}\label{secdiscreteSIR}

Measure-valued processes that satisfy the martingale problem
\eqref{eqDWmgSIR} arise naturally as weak limits of
discrete, finite-population stochastic models of spatial epidemics.
Here we describe one such class of models, following \citet{lalley09}
and \citet{lz10}. Several of the couplings we shall develop later in
the paper involving measure-valued epidemics will be constructed by first
building corresponding couplings for discrete epidemics, then using
the weak convergence of the discrete to the measure-valued processes
to prove that they extend to the measure-valued setting.

The discrete \emph{SIR}-$d$ epidemic models take place in populations
of size $N$ located at each of the sites of the integer lattice
$\zz{Z}^{d}$. We shall call $N$ the \emph{village size}. Each of the
$N$ individuals (or \emph{particles}) at a site $x\in\zz{Z}^{d}$ may
at any time be either \emph{susceptible, infected}, \emph{recovered}
or \emph{removed}. Infected individuals remain infected for one unit
of time, and then recover, after which they are immune to further
infection. The rules governing the transmission of infection are as
follows: at each time $i=1,2,\ldots,$ for each pair $(i_x, s_y)$ of
an infected individual located at $x$ and a susceptible individual at
$y$, $i_x$ infects $s_y$ with probability $p_N(x,y)$, where
%
%
\begin{eqnarray}
\label{eqpN} p_N(x; y)=p_N^\theta(x; y)
 &=&
\frac{1+\theta/N^{\alpha}}{(2d+1)N}\qquad
\mbox{if } |y- x|\leq1 \mbox{ and }
\nonumber
\\[-8pt]
\\[-8pt]
\nonumber
&=& 0\qquad\mbox{otherwise,}
\end{eqnarray}
where $|z|$ is the Euclidean norm of $z$ and
\[
\alpha=\alpha(d) = 2/(6-d)
\]
is the critical exponent; see Theorem 1 in \citet{lalley09} and
Theorem 2 in \citet{lz10}. For the \emph{SIR}-$d$ model with
village size $N$, define
\begin{eqnarray*}
\zz{X}_i^N(x)&:=&\mbox{set of
infected particles at } x \mbox{ at time } i;\qquad X_i^N(x):=
\bigl|\zz{X}_i^N(x)\bigr|;
\\
\zz{K}^N(x)&:=&\mbox{set of removed particles at } x
(\mbox{at time } 0);\qquad K^N(x):=\bigl|\zz{K}^N(x)\bigr|;
\\
\zz{R}_n^N(x)&:=&\mbox{set of recovered
particles at } x \mbox{ at time } n; \qquad R_n^N(x):=\bigl|
\zz{R}_n^N(x)\bigr|;
\\
\zz{X}_i^N&:=&\bigcup_x
\zz{X}_i^N(x),\qquad \zz{K}^N:= \bigcup_x
\zz{K}^N(x)\quad \mbox{and}\quad \zz{R}_n^N:=
\bigcup_x \zz{R}_n^N(x).
\end{eqnarray*}

Theorem \ref{thmmmain} and Proposition \ref{propSIRconv} below
suggest, after an interchange of
limits, that the critical infection probability $p_c(N)$ for the SIR
model satisfy
%
%
\begin{eqnarray}
\label{SIRCRIT}0<cN^{-\alpha}\le(2d+1)N\cdot p_c(N) - 1 \le C
N^{-\alpha}
\nonumber
\\[-8pt]
\\[-8pt]
\eqntext{\mbox{for large }N \mbox{ and }d=2,3.}
\end{eqnarray}
This would be consistent with the
result \eqref{LRC} for the long-range contact
process.
Whether or not there is a stronger relation [as in \eqref{LRconj1}]
involving the exact constants~$\theta_c$ in Theorem \ref{thmmmain} is
another interesting open question.

\textit{The standard construction}.
We now describe a way to construct this process using a
\emph{percolation structure}. Connections between SIR epidemics
and bond percolation go back at least to \citet{mo77} (see page 322) in
the continuous setting and were
used extensively by \citet{cd88}, again in the continuous time setting.
The construction we use is a modification of the
constructions in \citet{lalley09} and \citet{lz10}. We shall call this
the \emph{standard construction}. The percolation structure is a
random graph with vertex set $\zz{Z}^{d}\times\{1,2,\ldots,N\}$; the vertex
$(x,i)$ represents the $i$th individual (or particle) in the
``village'' $\mathcal{V}_{x}$ situated at location $x\in\zz{Z}^{d}$.
For each pair $(x,i)$ and $(y,j)$ of vertices whose spatial locations
differ by at most $1$ (i.e., $|x-y|\leq1$), a $p_{N}$-coin toss
determines whether or not there is an edge between $(x,i)$ and
$(y,j)$. (As is often the case in such constructions, it is useful,
for comparison purposes, to assume that these coin tosses are realized
using independent $\operatorname{uniform}[0,1]$ random variables.) Thus, the
resulting random
graph $\mathcal{G}=\mathcal{G}^{N}$ has edges only between vertices in
the same or neighboring villages.

The spatial epidemic is defined by a deterministic algorithm on the
random graph $\mathcal{G}$. Since the village size $N$ is fixed in
this algorithm, we shall omit all superscripts $N$ in the
specification of the algorithm. The colors \emph{green}, \emph{blue},
\emph{red} and \emph{black} will be used to denote susceptible,
infected, recovered and removed vertices in each generation. For the
$0$th generation, designate $K (x)$ vertices at location $x$ as
\emph{black}; the set of black vertices will not change during the
course of the epidemic. Next, color $X_{0}(x)$ vertices in
$\mathcal{V}_{x}$ \emph{blue}, and all remaining vertices
\emph{green}. (Thus, in generation~$0$ there are no red vertices.)
Now define a time evolution as follows. In generation $n+1$, the set
$\zz{X}_{n+1}$ of blue vertices will consist of all vertices that
were green in generation $n$ \emph{and} were connected by edges of the
random graph to blue vertices (i.e., vertices in
$\zz{X}_{n}$). Finally, all vertices that were blue in
generation $n$ become red in generation $n+1$, and remain red in all
subsequent generations (i.e., $\zz{R}_{n+1}=\zz{R}_{n}\cup
\zz{X}_{n}$).

The virtue of this construction is that all quantities of interest can
easily be described in terms of the geometry of the random graph
$\mathcal{G}^{\prime}=\mathcal{G}\setminus\zz{K}$
obtained by deleting all black vertices from $\mathcal{G}$. The set
$\zz{X}_{n}$ consists of all vertices at distance $n$
in the graph $\mathcal{G}^{\prime}$ from the set of vertices that were
colored blue in
generation $0$. Similarly, the set $\zz{R}_{n}$ consists of
all vertices at distance $<n$ from the blue vertices in
generation $0$. The set $\zz{R}_{\infty}$ of vertices
that are \emph{ever} infected during the course of the epidemic is the
union of the connected clusters of the blue vertices of generation~$0$
in $\mathcal{G}^{\prime}$. It is immediately obvious from this that the
recovered sets $\zz{R}_{n}$ are nonincreasing in the initial
condition $\zz{K}$, and nondecreasing in $\zz{X}_{0}$ and the
transmission parameter $\theta$.

Denote by
%
%
\begin{equation}
\label{eqPn} \zz{P}^{n} = \bigl(P_{n} (x,y)
\bigr)_{x,y\in\zz{Z}^{d}}= \bigl(P_{n} (y-x) \bigr)_{x,y\in
\zz{Z}^{d}}
\end{equation}
the transition probability kernel of the simple random walk on
$\zz{Z}^{d}$, that is, $\zz{P}^{n}=\zz{P}*\zz{P}^{n-1}$ is the
$n$th convolution power of the one-step transition probability
kernel given by
%
%
\begin{equation}
\label{eqsrw} P_{1} (x,y)= 1/ (2d+1) \qquad\mbox{for } |y-x|\leq1,
\end{equation}
and let $\sigma^{2}=2/ (2d+1)$ be the variance of the distribution
$P_{1} (0,\cdot)$. Let $G_{n} (x,y)$ be the associated Green's
function
\[
G_n(x, y ):=\sum_{1\le i< n}
P_i(x, y),\qquad G_n(x):= G_n(0,x),
\]
and for any finite measure $\mu$ on $\zz{Z}^{d}$ denote by $\mu G_{n}
(x)=(\mu*G_{n}) (x)$ the convolution of $\mu$ with $G_{n}$.

Next, we explain the re-scaling of the discrete epidemics that gives
weak convergence to the measure-valued epidemics determined by the
martingale problem~\eqref{eqDWmgSIRgen}.

\begin{dfn}\label{definitionrescaling}
The \emph{Feller--Watanabe scaling operator} $\mathcal{F}_{N}$ scales
mass by $1/N^{\alpha}$ and space by
$1/\sqrt{N^{\alpha}\sigma^{2}}$,
that is, for any finite Borel measure $\mu$ on $\zz{R}^{d}$ and
any test function $\phi$,
%
%
\begin{equation}
\label{eqFellerScaling} \langle\phi,\mathcal{F}_{N} \mu \rangle=
N^{-\alpha}\int\phi \bigl(x/\sqrt{N^{\alpha} \sigma^2}
\bigr) \mu(dx).
\end{equation}
\end{dfn}

\begin{dfn}\label{definitionSugitaniScaling}
The \emph{Sugitani scaling operator} $\mathcal{S}_{N}$ scales mass
by\break
$1/N^{\alpha(2-d/2)} $ and space by
$1/\sqrt{N^{\alpha}\sigma^{2}}$,
that is, for any function $f$,
%
%
\begin{equation}
\label{eqSugitaniScaling} (\mathcal{S}_{N}f) (x)= \frac{f (\sqrt {N^{\alpha} \sigma^2} x)}{N^{\alpha(2-d/2)}}.
\end{equation}
When the function $f$ is only defined for $x\in\zz{Z}^d$, define
$(\mathcal{S}_{N}f) (x)$ for $x\in\zz{Z}^d/[\sqrt{N^{\alpha}
\sigma
^2}]$ as above, and extend it to a continuous function on $\zz{R}^{d}$
by a suitable piecewise linear interpolation.
\end{dfn}

The following weak convergence theorem is a slight variant of the
main results in \citet{lalley09} and \citet{lz10}.

\begin{prop}\label{propSIRconv} Assume that $d\leq3$, and suppose
that the initial configurations
$\mu^N:=X_0^N$ and $K^N$
are both supported by finitely many integer sites and are
such that for some measure $\mu$
satisfying Assumption \ref{asmtpinireg2} below and some $K\in
C_p(\zz
{R}^d,\zz{R}_+)$, the following conditions are satisfied, where
$\Longrightarrow$ denotes weak convergence on the respective spaces:
\begin{longlist}[(a)]
\item[(a)] if $d=1$, then
${\mu^N(\sqrt{N^\alpha\sigma^2} \cdot)}/\sqrt{N^\alpha}$
are supported by a common compact interval, and (after linear
interpolation to be continuous functions on $\zz{R}$)
%
%
\begin{equation}
\label{hyp1dini} \frac{\mu^N(\sqrt{N^\alpha\sigma^2} x)}{\sqrt {N^\alpha}}\quad \Longrightarrow \quad X_0(x)\in
C_c(\zz{R});
\end{equation}
\item[(b)] if $d=2$ or $3$, then
%
%
\begin{eqnarray}
\label{hypinitialConfig}  \mathcal{F}_{N}\mu^{N}\quad
&\Longrightarrow&\quad\mu,
\\
\label{hypsmoothness}  \mathcal{S}_{N} \bigl(\mu^{N}
*G_{[N^{\alpha}t]} \bigr)\quad&\Longrightarrow&\quad\mu*q_{t}\in C_b
\bigl([0,\infty )\times\zz{R}^{d}\bigr),
\end{eqnarray}
where the second convergence is in $D([0,\infty);C_b(\zz{R}^{d}))$;

\item[(c)] in all dimensions,
%
%
\begin{equation}
\label{hypK} K^{N}(x) = \bigl[N^{\alpha(2-d/2)}\cdot K \bigl(x/ \bigl[
\sqrt{N^{\alpha} \sigma^2} \bigr] \bigr) \bigr]\qquad \mbox{for all
} x\in\zz{Z}^d.
\end{equation}
\end{longlist}
Then we have the following weak convergence:
%
%
\begin{equation}
\label{eqSIR-convergence} \bigl(\mathcal{F}_{N}X^{N}_{[N^{\alpha}t]},
\mathcal{S}_{N}R^{N}_{[N^{\alpha}t]} \bigr)\quad \Longrightarrow\quad
\bigl( X_{t}, L_{t} (x) \bigr)
\end{equation}
in $D([0,\infty);\mathcal{M}_c(\zz{R}^d))\times D([0,\infty
);C_b(\zz{R}^d))$,
where the limit process $X$ has initial configuration $X_{0}=\mu$,
solves \eqref{eqDWmgSIRgen} with $\gamma=1$, $\beta=1$, $\theta
$ as
in \eqref{eqpN}, and suppression rate $K$, and $L_t(x)$ is its
local time density process.
\end{prop}

\begin{asmptn}\label{asmtpinireg2} The finite measure $\mu$ has
compact support. When $d=1$, $\mu$ has a density $X_0(x)\in C_c(\zz
{R})$, and for $d=2,3$ for some $C_\mu>0$, $\mu$ satisfies
%
%
\begin{eqnarray}
\label{conc} &&\sup_{x\in\zz{R}^d}\mu \bigl(B(x,r) \bigr)
\nonumber
\\[-8pt]
\\[-8pt]
\nonumber
&&\qquad\le
\cases{C_{\mu}(\log1/r)^{-3},& \quad$\mbox{if }d=2,$\vspace*{2pt}
\cr
C_\mu r (\log1/r)^{-2},&\quad$\mbox{if }d=3$} \qquad\mbox{for
all $r\in(0,1]$}.
\end{eqnarray}
\end{asmptn}

\begin{remark}\label{rmktwoasmptn}
It is easy to see that Assumption \ref{asmtpinireg2} implies
Assumption~\ref{asmtpinireg1}. Take the case $d=3$, for example. For
any $(t_n,x_n)\to(t,x)$, we want to show that $\int_y q_{t_n}(y-x_n) \,d\mu(y) \to\int_y q_{t}(y-x) \,d\mu(y)$. Since $\mu(\{x\})=0$, we have
\[
q_{t_n}(y-x_n)\to q_t(y-x)\qquad\mbox{for } \mu
\mbox{-a.a. }y,
\]
and hence it suffices to show that $\{q_{t_n}(y-x_n)\}$ is uniformly
integrable with respect to $\mu$, which in turn reduces to show
\[
\lim_{\delta\to0} \sup_n \int
_{|y-x_n|\leq\delta} q_{t_n}(y-x_n) \,d\mu(y) =0.
\]
To see this, let $M(r)=\mu(B(x_n,r))$ for $r\ge0$.
The elementary bound $q_t(z)\le C|z|^{-1}$ and an integration by parts
lead to
%
%
\begin{eqnarray}
\label{eqintparts} \int_{|y-x_n|\leq\delta} q_{t_n}(y-x_n)
\,d\mu(y) &\leq& C\int_{|y-x_n|\leq\delta} |y-x_n|^{-1}
\,d\mu(y)
\nonumber\\
&=&C\int_0^\delta r^{-1} \,dM(r)
\nonumber
\\[-8pt]
\\[-8pt]
\nonumber
&=&Cr^{-1}M(r)|_0^\delta+ C\int
_0^\delta r^{-2} M(r) \,dr
\\
&\leq& C \bigl(\log(1/\delta) \bigr)^{-2} + C\int_0^\delta
r^{-1} \bigl(\log(1/r) \bigr)^{-2} \,dr,\nonumber
\end{eqnarray}
which goes to $0$ as $\delta\to0$.
A similar argument applies for $d=2$.
\end{remark}

\begin{remark}\label{rmkasmptn2trueforsbm}
For any $\mu\in\sM_c(\zz{R}^d)$ and any fixed $\theta\in\zz{R},
\gamma
>0$, by Theorems III.4.2 and III.3.4. in \citet{PerkinsSFnotes}, if $X$
is a super-Brownian motion with initial mass distribution $\mu$, drift
$\theta$ and branching rate $\gamma$, then Assumption \ref
{asmtpinireg2} is satisfied by $X_t$ for all $t>0$ almost surely.
Furthermore, for any $K\in C_p(\zz{R}^d,\zz{R}_+)$ and $\beta>0$, by
the absolute continuity between the laws $P^{\theta,0,\gamma}_{\mu,0}$
and $P^{\theta,\beta,\gamma}_{\mu,K}$, the same is true for a spatial
epidemic with initial mass distribution $\mu$, local suppression rate
$K$, transmission rate $\theta$, branching rate $\gamma$ and inhibition
parameter $\beta$.
\end{remark}

\begin{remark}\label{remarkinitialApproximation}
For $\mu$ satisfying Assumption \ref{asmtpinireg2} there are
rescaled counting measure $\mu^N$'s satisfying the hypotheses of the
above theorem, and hence
Proposition \ref{propSIRconv} implies, among other things, that for
any suppression rate
function $K\in C_p(\zz{R}^d,\zz{R}_+)$, the measure-valued epidemic
process $X$ satisfying \eqref{eqDWmgSIRgen} is a weak limit of
appropriately scaled discrete \emph{SIR} epidemics. When $d=1$, for
each $x\in\zz{Z}/\sqrt{N^\alpha\sigma^2}$, let
$\mu^{N}(x\sqrt{N^\alpha\sigma^2})=[\sqrt{N^\alpha}\cdot
X_0(x)]$. Then
\eqref{hyp1dini} is obvious.
When $d=2$ or 3, the required sequence $\{\mu^N\}$ satisfying \eqref
{hypinitialConfig} and \eqref{hypsmoothness} can be built as follows.
Recall that for each $x\in\zz{Z}^d$, $Q(x)$ stands for the
(half-closed, half-open) unit cube centered at $x$. $\zz{R}^d$ can
hence be decomposed as a nonoverlapping union of $Q(x)$'s for $x\in
\zz
{Z}^d$, and so for any $y\in\zz{R}^d$, we can find a unique $x\in\zz
{Z}^d$ such that $y\in Q(x)$, and with a slight abuse of notation,
denote such an $x$ by $[y]$. Next, let $\{X_i\}$ be a sequence of \mbox
{i.i.d.} random variables with probability distribution $\mu/|\mu|$,
and let
\[
\mu^N = \sum_{i=1}^{[N^\alpha\cdot|\mu|]}
\delta_{[X_i\sqrt{N^\alpha
\sigma^2}]}.
\]
(Note that $\alpha<1$ and on each integer site there are $N$ vertices,
so for all $N$ large enough, $\mu^N$ can be realized as a counting
measure on the graph $\zz{Z}^{d}\times\{1,2,\ldots,N\}$.)
One can then show that $\{\mu^N\}$ satisfies
\eqref{hypinitialConfig} and \eqref{hypsmoothness} almost surely. In
fact, \eqref{hypinitialConfig} holds trivially by the strong law of
large numbers (SLLN), the uniform continuity of test functions and the
simple bound
%
%
\begin{equation}
\label{eqrounddist} \biggl\llvert\frac{[X_i \sqrt{N^\alpha
\sigma
^2}]}{\sqrt{N^\alpha\sigma^2}} - X_i \biggr
\rrvert\leq\frac{1}{\sqrt{N^\alpha\sigma^2}} \qquad\mbox {for all }i.
\end{equation}
The verification of \eqref{hypsmoothness} is given in the \hyperref[app]{Appendix}.
\end{remark}

\begin{remark}\label{remarkjointConvergenceSIR}
The arguments of \citet{lalley09} and \citet{lz10} are based on the fact
that each of the discrete \emph{SIR} epidemics has law absolutely
continuous with respect to the law of a critical branching random walk
with the same initial condition. The Radon--Nikodym derivatives can be
written explicitly as products, and these can be shown to converge to
exponentials of the form $\mathcal{E}^{B}_{t}$ appearing in
\eqref{eqRNformulaLT}. Since branching random walks, after rescaling,
converge to super-Brownian motions, it follows that the rescaled discrete
\emph{SIR} epidemics converge to processes related to super-Brownian
motion by \eqref{eqRNformulaLT}, that is, processes that solve the
martingale problem \eqref{eqDWmgSIR}.

Routine modifications of these arguments can be used to establish weak
convergence for a variety of discrete processes similar to or related
to the discrete \emph{SIR} epidemics constructed above. In particular,
the convergence \eqref{eqSIR-convergence} can be extended to
\emph{joint} weak convergence for \emph{coupled} \emph{SIR} epidemics
with suitable initial conditions. For example, let
$\mu^{N,A},\mu^{N,B}$ be initial conditions satisfying the hypotheses
\eqref{hyp1dini}--\eqref{hypsmoothness}, and let
$\zz{X}^{N,A},\zz{X}^{N,B},\zz{X}^{N}$ be discrete \emph{SIR}
epidemics all constructed using the same percolation structure
$\mathcal{G}^{N}$, with the same initially removed sets
$\zz{K}^{N} (x)$, in such a way that the sets $\zz{X}^{N,A}_{0}
(x)$ and $\zz{X}^{N,B}_{0} (x)$ are nonoverlapping, with
cardinalities $\mu^{N,A} (x)$ and $\mu^{N,B}(x)$, and such that
%
%
\begin{equation}
\label{eqdisjointUnion} \zz{X}^{N}_{0} (x) =
\zz{X}^{N,A}_{0} (x)\cup\zz{X}^{N,B}_{0}
(x).
\end{equation}
Then after rescaling, the processes
$X^{N,A},X^{N,B} $ and $X^{N}$ converge \emph{jointly} in law to
(dependent) measure-valued epidemics $X^{A}_{t}, X^{B}_{t}$ and
$X_{t}$, with initial mass distributions $\mu^{A},\mu^{B}$ and
$\mu^{A}+\mu^{B}$, respectively, whose local time densities satisfy
%
%
\begin{equation}
\label{eqcoupledLocalTimes} L^{A}_{t}\vee
L^{B}_{t}\leq L_{t}\leq L^{A}_{t}+L^{B}_{t}.
\end{equation}

(The arguments that follow will not rely in an essential way on this
joint convergence, however. All that is needed is that
\emph{subsequences} converge jointly, as this is enough to guarantee
the existence of coupled measure-valued processes satisfying the same
monotonicity properties [such as \eqref{eqcoupledLocalTimes}]. Joint
convergence along subsequences follows trivially from the weak
convergence of marginals, since this implies joint tightness.)
\end{remark}

\subsection{Comparison lemmas}\label{seccomparisons}
The construction of the measure-valued spatial epidemic process as the
weak limit of discrete epidemics and the Girsanov formulas~\eqref{eqRNformulaLT}--\eqref{eqRNformulaLKK} lead to a
number of basic comparison principles that will be used in the proof
of Theorem \ref{thmmmain}. We formulate these as
\emph{couplings}, in which two epidemic processes (or super-Brownian
motions) are constructed on a common probability space in such a way
that various functionals of the processes [e.g., the limiting local
time densities $L_{\infty} (x)$] are ordered.

\begin{lemma}\label{lemmasurvchar}
Suppose that $X_t$ has law $P^{\theta,1,\gamma}_{\mu,K}$ for some
$K\in C_{p} (\zz{R}^{d},\zz{R}_{+})$ and some initial condition $\mu$
that satisfies Assumption \ref{asmtpinireg1}. Then
\[
P(X \mbox{ survives}) = P \biggl(\lim_{t\to\infty}\int
_{\zz{R}^d} L_t(x) \,dx =\infty \biggr)=P \Bigl(\lim
_{t\to\infty}|X_t|=\infty \Bigr).
\]
\end{lemma}

\begin{pf}
This uses the existence of a coupling between the measure-valued epidemic
$X$ and its super-Brownian motion envelope (Proposition~\ref
{propBRWenvelope}).
If $Z_s:=|X_s|$ is the total mass at time $s$, then
$\lim_{t\to\infty}\int_{\zz{R}^d} L_t(x) \,dx = \int_0^\infty Z_s
\,ds$. Because $Z_s$ is continuous, and 0 is an absorbing
state (\mbox{e.g.}, by the strong Markov property in
Theorem \ref{thmmexistuniq}), if $\int_0^\infty Z_s \,ds
=\infty$, then $X$ must survive. On the other hand, if
$\liminf_{t\to\infty}Z_t <\infty$, then there exists $M\in\zz{N}$ and
an infinite sequence of stopping times $\tau_n\to\infty$ such that
%
%
\begin{equation}
\label{eqsmallmass} \tau_{n+1}\geq\tau_n + 1\quad \mbox{and}\quad Z_{\tau_n}\leq M.
\end{equation}
Consider the time period $[\tau_n,\tau_n+1]$. By the strong Markov
property and the existence of a monotone coupling between a spatial
epidemic and its super-Brownian motion envelope, the process
$Z_{t+\tau_n}$ is dominated by
Feller diffusion with drift $\theta$ and initial total mass less
than $M$. This dies out in the next one unit of time with
positive probability, independent of $n$, hence so does $X_{t+\tau_n}$
for $t\leq1$.
It follows that with probability 1, if $X$ survives, then
$\lim_{t\to\infty}Z_t=\infty$. As the latter trivially implies
$\int_0^\infty Z_s \,ds=\infty$, the proof is complete.
\end{pf}

\begin{lemma}\label{lemmacomparisontheta}
Fix $\theta< {\theta}^{*}$ and $\gamma>0$. For any initial mass
distribution $\mu$ that satisfies
Assumption \ref{asmtpinireg2} and any $K\in C_p(\zz{R}^d,\zz{R}_+)$,
there exist on some probability space
epidemic processes $(X,X^{*})\in
D([0,\infty);\mathcal{M}_c(\zz{R}^d))^2$ with laws
$P^{\theta,1,\gamma}_{\mu,K}$ and $P^{{\theta}^{*},1,\gamma}_{\mu,K}$
and local time densities $L_{t},L^{*}_{t}$, respectively, such
that almost surely, for every $t\geq0$,
%
%
\begin{equation}
\label{eqdomination} L_{t}\leq L^{*}_{t}.
\end{equation}
\end{lemma}

\begin{pf}
This follows from the weak convergence result
\eqref{eqSIR-convergence} and the standard construction. Recall that
in the standard construction of the discrete \emph{SIR} epidemics, the
evolution is determined by the random graph
$\mathcal{G}^{\prime}
$
in which edges are present with
probabilities $p_{N} (x;y)=p^{\theta}_{N} (x;y)$ given by
\eqref{eqpN}. These probabilities are increasing in
$\theta$. Consequently, it is possible (using auxiliary $\operatorname{uniform}[0,1]$
random variables) to simultaneously construct random graphs
$\mathcal{G}^{\prime}$ and $\mathcal{G}^{\prime}_*$ with percolation
probabilities
$p^{\theta}_N (x;y)$ and $p^{\theta^{*}}_N(x;y)$, respectively, in
such a way
that the edge set of $\mathcal{G}^{\prime}$ is contained in that of
$\mathcal{G}^{\prime}_*$. This forces
\[
\zz{R}_{n} (x)\subseteq\zz{R}^{*}_{n} (x)\qquad
\mbox{for all } n\geq0 \mbox{ and } x\in\zz{Z}^{d},
\]
and hence also $R_{n} (x)\leq R^{*}_{n} (x)$. This inequality will be
preserved upon taking weak limits, so we obtain \eqref{eqdomination}.
\end{pf}

\begin{remark}\label{remarksurvivalByLT}
It follows immediately from \eqref{eqdomination} and Lemma \ref
{lemmasurvchar} that
%
%
\begin{equation}
\label{eqK-survival-probability-comparison} P (X \mbox{ survives}) \leq P
\bigl(X^{*} \mbox{ survives} \bigr).
\end{equation}
\end{remark}

\begin{lemma}\label{lemmacompK}
Let $K,K^{*}\in C_{p} (\zz{R}^{d};\zz{R}_{+})$ be suppression rate
functions such that $K \leq K^{*}$. Then for any $\mu$ satisfying Assumption
\ref{asmtpinireg2}, there exist spatial epidemics $(X,X^{*})\in
D([0,\infty);\mathcal{M}_c(\zz{R}^d))^2$ with marginal laws
$P^{\theta,1,\gamma}_{\mu,K}$ and $P^{{\theta},1,\gamma}_{\mu, K^*}$
and local time densities $L_{t},L^{*}_{t}$, respectively, such that
almost surely,
%
%
\begin{equation}
\label{eqcompK} L_{t} \geq L^{*}_{t} \qquad\mbox{for
all } t\geq0,
\end{equation}
and
$P (X \mbox{ survives}) \geq P (X^{*} \mbox{ survives})$.
\end{lemma}

\begin{pf}
The existence of the coupling follows directly from the weak convergence
\eqref{eqSIR-convergence} and the standard construction, because in
this construction, increasing the removed sets $\zz{K} $
decreases the sizes of the connected components. The assertion
about
survival probabilities follows from \eqref{eqcompK}, by Lemma
\ref{lemmasurvchar}.
\end{pf}

\begin{lemma}\label{lemmacompmu} Let $\mu_{0}, \nu_{0}$ be initial
mass distributions satisfying
Assumption~\ref{asmtpinireg2}, and $\mu=\mu_{0}+\nu_{0}$. Then on
some probability space there exist epidemic
processes $(X, X^1, X^2)\in
D([0,\infty);\mathcal{M}_c(\zz{R}^d))^3$, with initial conditions
$\mu,\mu_{0}$, and $\nu_{0}$ and marginal laws $P^{\theta,1,\gamma
}_{\mu,0}$, $P^{\theta,1,\gamma}_{\mu_{0},0}$ and $P^{\theta,1,\gamma}_{\nu_{0},0}$, respectively, such that
%
%
\begin{eqnarray}
\label{eqsubadditive} \max \bigl(L^1_t(x),
L^2_t(x) \bigr)\leq L_t(x) \leq
L^1_t(x) + L^2_t(x)
\nonumber
\\[-8pt]
\\[-8pt]
 \eqntext{\mbox{for
all } t\geq0\mbox{ and }x\in\zz{R}^d.}
\end{eqnarray}
Consequently,
%
%
\begin{eqnarray}
\label{eqsurvsubadditive} P(X \mbox{ survives}) &\leq& P \bigl(X^1
\mbox{ survives} \bigr) + P \bigl(X^2 \mbox{ survives} \bigr)\quad
\mbox{and}
\\
\label{eqsurvmono} P(X \mbox{ survives}) &\geq& P \bigl(X^i \mbox{
survives} \bigr)\qquad \mbox{for each } i=1,2.
\end{eqnarray}
\end{lemma}

\begin{pf}
This follows by the same argument as the preceding lemma; see
Remark \ref{remarkjointConvergenceSIR}.
\end{pf}

Lemma \ref{lemmacompmu} describes the effect of adding
infected mass at time $0$. The next lemma concerns the effect of
introducing additional infected mass at a time $t_{*}>0$ after the epidemic
has already begun. Let $\mu, \mu_{0}, \nu_{0}$ be initial mass
distributions satisfying the hypotheses of
Lemma \ref{lemmacompmu}. Say that $X^{*}$ is a
\emph{measure-valued epidemic with immigration} at time $t_{*}$ if it
satisfies the following martingale problem: for all $\varphi\in
C^{2}_{c} (\zz{R}^{d})$,
%
%
\begin{eqnarray}
\label{eqEpidemicWithImmigration}\quad  X^{*}_t(\phi) &=&
\mu_{0} (\varphi)+\mathbf{1}_{[t_{*},\infty)} (t)\nu_{0} (
\varphi)
\nonumber
\\[-8pt]
\\[-8pt]
\nonumber
&&{}+\int_0^t X^{*}_s
\bigl( \Delta\phi/2 +\theta\phi-K\phi-\beta L^{{*}}_s \phi
\bigr) \,ds+\sqrt{\gamma} M^{*}_t(\phi),
\end{eqnarray}
where $M^{*}_{t}$ is a continuous martingale with quadratic variation
$[M^{*} (\varphi)]_{t}=\int_0^t X^{*}_s(\phi^2) \,ds$, and
$L^{*}=L^{X^{*}}$ is the
local time density of $X^{*}$. Existence and uniqueness of solutions to
\eqref{eqEpidemicWithImmigration} follows from
Theorem \ref{thmmexistuniq} and the Markov property.

\begin{lemma}\label{lemmaEpidemicWithImmigration} Let $\mu_{0}, \nu
_{0}$ be initial mass distributions satisfying
Assumption~\ref{asmtpinireg2}, and $\mu=\mu_{0}+\nu_{0}$.
On some probability space there exists a solution $X$ to
\eqref{eqDWmgSIRgen} with initial mass distribution
$\mu$ and a solution $X^{*}_{t}$ to
\eqref{eqEpidemicWithImmigration} such that
%
%
\begin{equation}
\label{eqImmigrationComparisona} L^{X}_{t}\geq
L^{X^{*}}_{t}\qquad \forall t\geq0\quad \mbox{and}\quad L^{X}_{\infty}=
L^{X^{*}}_{\infty}.
\end{equation}
\end{lemma}

\begin{pf}
This is by discrete approximation, using the standard construction of
the discrete \emph{SIR} epidemics. On each percolation structure
$\mathcal{G}=\mathcal{G}^{N}$, we construct a pair of epidemics. The
first, denoted by $\zz{X}=\zz{X}^{N}$, is constructed using initially
infected sets $\zz{X}_{0}=\zz{X}^{N}_{0}$ such that
\eqref{hyp1dini}--\eqref{hypsmoothness} hold. The second,
denoted by $\zz{Y}=\zz{Y}^{N}$, has initially infected sets
$\zz{Y}_{0}\subseteq\zz{X}_{0}$ such that after
Feller--Watanabe rescaling the initial mass distributions converge to
$\mu_{0}$; see Remark \ref{remarkinitialApproximation} in
Section~\ref{secdiscreteSIR}. This second epidemic $\zz{Y}$ has
spontaneous new infections at time $[N^{\alpha}t_{*}]$: in
particular, all individuals in the sets\looseness=1
\[
\zz{X}_{0} \setminus\zz{Y}_{0}:=\bigcup_x
\bigl( \zz{X}_{0} (x) \setminus\zz{Y}_{0} (x) \bigr)
\]\looseness=0
who are not yet recovered become infected. Thus, the time evolution of
the epidemic $\zz{Y}_{n}$ is determined by the random graph
$
\mathcal{G}^{\prime}:=\mathcal{G}\setminus\zz{K}$ as
follows: (1) For $n<[N^{\alpha}t_{*}]$, the recovered set $\zz
{R}^{Y}_{n}$ consists
of all vertices at graph distance $< n$ from the initially infected set
$\zz{Y}_{0}$. (2) For $n\geq[N^{\alpha}t_{*}]$, the set
$\zz{R}^{Y}_{n}$ consists of all vertices $v$ such that either the graph
distance of $v$ from $\zz{Y}_{0} $ is $<n$, or the
graph distance of $v$ from $\zz{X}_{0} \setminus\zz{Y}_{0}$
is $< n- [N^{\alpha}t_{*}]$.

From the construction above and the standard construction described
earlier, it is clear that
%
%
\begin{equation}
\label{XYinclusion}\mbox{for all $n\geq0$}\qquad \zz{R}^X_n
\supseteq\zz{R}^Y_n\quad\mbox{and}\quad\zz{R}^X_n
\subseteq\zz{R}^Y_{n+[N^{\alpha}t_{*}]}.
\end{equation}
Set
\[
Y^{N}_{t} = \mathcal{F}_{N}|
\zz{Y}_{[N^{\alpha}t]}| \quad\mbox{and}\quad R^{ Y, N}_{t} =
\mathcal{S}_{N}\bigl|\zz{R}^Y_{[N^{\alpha}t]}\bigr|,
\]
where $\mathcal{F}_{N}$ and $\mathcal{S}_{N}$ are the Feller--Watanabe
and Sugitani rescaling operators.

\begin{claim}\label{claimdelay}
The vector-valued process $(Y^{N},R^{Y, N})$ converges weakly to
a process $(X^{*},L^{*})$ such that $X^{*}$ solves the
martingale problem \eqref{eqEpidemicWithImmigration}, and
$L^{*}=L^{X^{*}}$ is the local time density of $X^{*}$.
\end{claim}
By passing to a subsequence, if necessary, it follows from
Proposition \ref{propSIRconv}, the above claim and \eqref
{XYinclusion} that
\[
L_t^{X^*}\le L_t^Y\le
L^{X^*}_{t+t_*}\qquad\mbox{for all }t\ge0 \mbox{ a.s.}
\]
This clearly implies \eqref{eqImmigrationComparisona}.

\begin{pf*}{Proof of the claim (sketch)} This is done by following the likelihood
ratio strategy described in
Remark \ref{remarkjointConvergenceSIR}. As used in \citet{lalley09}
and \citet{lz10}, this strategy was based on the fact that each
discrete \emph{SIR} epidemic considered had law absolutely continuous
with respect to the law of a critical branching random walk with the
same initial condition. The bulk of the proof consisted of showing
that the likelihood ratios converged in law, \emph{under the branching
random walk measure}, to the Radon--Nikodym derivative of a
measure-valued epidemic relative to the law of super-Brownian
motion. For the processes considered in this claim, the appropriate
comparison processes are not standard branching random walks, but
rather \emph{branching random walks with immigration} in which new
particles are introduced at times $[N^{\alpha}t_{*}]$ in such a way
that after Feller--Watanabe rescaling the mass distributions of these
new particles converge to $\nu_{0}$. The laws of these processes
converge, after rescaling, to the law of super-Brownian motion with
immigration at time $t_{*}$, that is, a process $Y^{*}$ satisfying the
martingale problem~\eqref{eqEpidemicWithImmigration} with $\beta=0$
and $K=0$. (This follows easily from the standard convergence
theorem for critical branching random walks because the effect of
the immigration is simply to superimpose an independent branching random
walk shifted in time by $[N^{\alpha}t_{*}]$.)

Consider the likelihood ratios for the law of the epidemic process
$Y^{N}$ relative to that of the corresponding branching random
walk with immigration. These are products of factors indexed by
(discrete) times $t$ and lattice sites $x\in\zz{Z}^{d}$ [see
\citet{lalley09}, equation (53)]. For $t\leq[N^{\alpha}t_{*}]$ the factors
are exactly the same as in the case where there is no
immigration. Beginning with time $t=[N^{\alpha}t_{*}]$, new factors
are introduced; these indicate the relative likelihood ratios for the
newly introduced immigrants and their offspring. {Under the law of the
branching random walks with immigration} the immigrants and their
offspring evolve \emph{independently} of the progeny of the original
(time $0$) particles. Using this fact, one can show, in much the same
manner as in \citet{lalley09} and \citet{lz10}, that the likelihood
ratios converge weakly (under the branching random walk with
immigration laws) to the
Radon--Nikodym derivative of the process $X^{*}$ relative to
super-Brownian motion with immigration.
In carrying out this final step, the main hurdle is showing that in
the epidemics with immigration, the numbers of individuals in the sets
$\zz{X}_{0}\setminus\zz{Y}_{0}$ who are infected prior to time
$[N^{\alpha}t_{*}]$ is of order $O_{P} (1)$. Here is a brief synopsis
of the argument: since the local time densities, after rescaling,
converge,
the maximum number of recovered individuals
at time $[N^{\alpha}t_{*}]$
at any site is of order $O_{P}
(N^{\alpha(2-d/2)})$. Consequently, because $\zz{X}_{0}\setminus
\zz{Y}_{0}$ has cardinality on the order $N^{\alpha}$, the number of
individuals in $\zz{X}_{0}\setminus\zz{Y}_{0}$ infected prior to time
$[N^{\alpha}t_{*}]$ is of order
%
%
\begin{equation}
\label{purplebnd} O \bigl(N^{\alpha} \bigr)\times\frac
{O_P(N^{\alpha
(2-d/2)})}{N} =
O_P(1),
\end{equation}
since $\alpha=2/(6-d)$.
\end{pf*}
This completes the proof of Lemma \ref{lemmaEpidemicWithImmigration}.
\end{pf}
In the proof of Theorem \ref{thmmmain} it will be necessary to compare
the evolution of a measure-valued epidemic $X$ with a coupled process
in which additional infected mass is introduced at a \emph{random} time.
Say that $X^{*}_{t}$ is a \emph{measure-valued epidemic with
immigration} at time $\tau$ if it satisfies the following martingale
problem: for all $\varphi\in
C^{2}_{c} (\zz{R}^{d})$,
%
%
\begin{eqnarray}
\label{eqtauEpidemicWithImmigration}\quad  X^{*}_t(\phi)& =&
\mu_{0} (\varphi)+\mathbf{1}_{[\tau,\infty)} (t)\nu_{0} (
\varphi)
\nonumber
\\[-8pt]
\\[-8pt]
\nonumber
&&{}+\int_0^t X^{*}_s
\bigl( \Delta\phi/2 +\theta\phi-K\phi-\beta L^{*}_s \phi
\bigr) \,ds+\sqrt{\gamma} M^*_t(\phi),
\end{eqnarray}
where $\tau$ is a finite stopping time relative to the
filtration $\sF^{X^*}$, $M^{*}_{t}$ is a continuous martingale
with quadratic variation $[M^{*} (\varphi)]_{t}=\int_0^t
X^{*}_s(\phi^2) \,ds$ and $L^*=L^{X^{*}}$ is the local time density.

\begin{lemma}\label{lemmatauImmigration}
Let $\mu_{0}, \nu_{0}$ be initial mass distributions satisfying
Assumption~\ref{asmtpinireg2}, and $\mu=\mu_{0}+\nu_{0}$. Then on
some probability space
there exist epidemics $(X_t, X_t^*) \in
D([0,\infty);\mathcal{M}_c(\zz{R}^d))^2$ such that (1) $X$ solves
the martingale problem~\eqref{eqDWmgSIRgen} with initial value
$X_{0}=\mu$; (2) $X^{*}$ solves the martingale problem~\eqref{eqtauEpidemicWithImmigration}; (3)
%
%
\begin{equation}
\label{eqImmigrationComparison} L^{X}_{t}\geq
L^{X^{*}}_{t}\qquad \mbox{for all } t\geq0 \quad\mbox{and}\quad
L^{X}_{\infty}= L^{X^{*}}_{\infty}.
\end{equation}
\end{lemma}

\begin{pf}
By the usual continuity (weak convergence) arguments, it suffices to
prove this for stopping times $\tau$ that take values in a finite
set. By a routine induction on the cardinality of this finite set, it
suffices to consider stopping times that take values in a two-element
set $\{0,t_{*} \}$. For such stopping times, the result follows from
Lemma \ref{lemmaEpidemicWithImmigration}, since this can be applied
conditionally on $\mathcal{F}_{0}$.
\end{pf}

Now the results of Lemmas \ref{lemmacompK}, \ref{lemmacompmu} and
\ref{lemmatauImmigration} can be combined, allowing us to couple the
measure-valued epidemic $X$ with a measure-valued process
$X^{*}$ in which the infected mass is decreased and the
suppression rate increased at a random time $\tau$. Here we will use
the strong Markov
property [Theorem \ref{thmmexistuniq}(c)] and Remark \ref
{remarkrandomInitialConditions}. The process
$X^{*}$ will satisfy the following martingale problem: for every
$\varphi\in C^{2}_{c} (\zz{R}^{d})$,
%
%
\begin{eqnarray}
\label{eqrubeGoldbergConstruction} &&X^{*}_{t} (\varphi)=
\nonumber
\\[-4pt]
\\[-12pt]
\nonumber
&&\qquad\cases{\displaystyle \mu_0(\phi)+\int_0^t
X^{*}_s \bigl( \Delta\phi/2 +\theta\phi-K\phi-\beta
L_s^{X^{*}}\phi \bigr) \,ds\vspace*{2pt}\cr
\qquad{}+\sqrt{\gamma}
M_t^{*}( \phi), & \quad$\mbox{for all } t< \tau$, \vspace*{2pt}
\cr
\displaystyle Y_{\tau} (\varphi) + \int_{\tau}^t
X^{*}_s \bigl( \Delta\phi/2 +\theta\phi-K^{*}_{\tau}
\phi-\beta L_s^{X^{*}}\phi \bigr) \,ds\vspace*{2pt}\cr
\qquad{}+\sqrt{\gamma}
\bigl(M^{*}_t(\phi)-M^{*}_{\tau}(\phi)
\bigr),&\quad $\mbox{for all } t\geq\tau,$}\hspace*{-35pt}
\end{eqnarray}
where $L^{X^{*}}_{s}$ is the local time density and:
\begin{longlist}[(iii)]
\item[(i)]$\tau$ is a finite $\sF^{X^*}$-stopping time;
\item[(ii)]$Y_{\tau}$ is an $\sF^{X^*}_{\tau-}$-measurable random
measure satisfying $Y_{\tau}\leq X^{*}_{\tau-}+\nu_{0}$;
\item[(iii)]$K^{*}_{\tau}$ is an $\sF^{X^*}_{\tau}$-measurable random
element of $C_{p} (\zz{R}^{d},\zz{R}_{+})$
satisfying \mbox{$K^{*}_{\tau}\geq K$};
\item[(iv)]$M^{*}_{t} (\varphi)$ is an $\sF^{X^*}$-continuous martingale with
quadratic variation $[M^{*} (\varphi)]_{t}=\int_0^t X^{*}_s(\phi^2) \,ds$.
\end{longlist}
%

\begin{prop}\label{propcompimmigration}
Let $\mu_{0}, \nu_{0}$ be initial mass distributions satisfying
Assumption \ref{asmtpinireg2}, and $\mu=\mu_{0}+\nu_{0}$.
Then there exist epidemics $(X_t, X_t^*) \in
D([0,\infty); \mathcal{M}_c(\zz{R}^d))^2$ such that:
\begin{longlist}[(iii)]
\item[(i)]$X$ solves the martingale problem \eqref{eqDWmgSIRgen} with
initial value $X_{0}=\mu$;
\item[(ii)]$X^{*}$ satisfies the martingale
problem \eqref{eqrubeGoldbergConstruction};
\item[(iii)] the local time densities of $X$ and $X^{*}$ satisfy
\[
L^{X}_{t}\geq L^{X^{*}}_{t} \qquad\mbox{for
all } t\geq0.
\]
\end{longlist}
\end{prop}

This proposition can be used iteratively, using the strong Markov
property and standard arguments, so as to allow immigration
and increases in the suppression rate $K^{*}$ at increasing stopping times
$0=\tau_{0}\leq\tau_{1}\leq\tau_{2}\leq\cdots<\infty$. The
associated martingale problem is as follows: for every $\varphi\in
C^{2}_{c} (\zz{R}^{d})$, and for all $\tau_i\leq t< \tau_{i+1}$,
%
%
\begin{eqnarray}
\label{eqDWSIRImmiSeries}\qquad X_t^*(\varphi) &= &\mu_{i}(
\varphi)
+ \int_{\tau_i}^t X^{*}_s
\bigl( \Delta\varphi/2 +\theta\varphi-K^{*}_{i}\phi-\beta
\bigl(L_s \bigl(X^{*} \bigr)-L_{\tau_i}
\bigl(X^{*} \bigr) \bigr)\varphi \bigr) \,ds
\nonumber
\\[-4pt]
\\[-12pt]
\nonumber
&&{}+\sqrt{\gamma}
\bigl(M^{*}_t(\varphi)-M^{*}_{\tau_i}(
\varphi) \bigr),
\end{eqnarray}
where:
\begin{longlist}[(iii)]
\item[(i)] for each $i=1,2,\ldots,$ $\mu_{i}$ and $\nu_{i}$ are
$\sF^{X^*}_{\tau_{i}-}$-measurable random measures
satisfying Assumption \ref{asmtpinireg2} and
such that
\[
\mu_{i} + \nu_{i} \leq X^{*}_{\tau_i-}+
\nu_{i-1};
\]
\item[(ii)]$K^{*}_{0}\equiv0$, and $K^{*}_{i}\in C_{p}
(\zz{R}^{d},\zz{R}_{+})$ is, for each $i$, an
$\sF^{X^*}_{\tau_i}$-measurable random function such that
\[
K^{*}_{i}\geq K^{*}_{i-1} + \beta
\bigl(L_{\tau_i} \bigl(X^{*} \bigr)-L_{\tau_{i-1}}
\bigl(X^{*} \bigr) \bigr);
\]
\item[(iii)]$M^{*}_{t}$ is a continuous $\sF^{X^*}$-martingale with
quadratic variation\break  $[M^{*} (\varphi)]_{t}=\int_0^t X^{*}_s(\phi^2) \,ds$.
\end{longlist}
%
The existence of a solution to this martingale problem follows from
the strong Markov property [Theorem \ref{thmmexistuniq}(d)].
Proposition \ref{propcompimmigration} and a standard induction
argument now yield the following comparison result.

\begin{prop}\label{propcompimmigrationseries}
Let $\mu_{0}, \nu_{0}$ be initial mass distributions satisfying
Assumption \ref{asmtpinireg2}, and $\mu=\mu_{0}+\nu_{0}$.
Then on some probability space there exist measure-valued processes
$X$ and $X^{*}$ such that \textup{(i)}
$X$ solves the martingale problem $(\mathrm{MP})_{\mu,0}^{\theta,\beta,\gamma}$
specified in \eqref{eqDWmgSIRgen}; \textup{(ii)} $X^{*}$ solves the
martingale problem \eqref{eqDWSIRImmiSeries}; \textup{(iii)} the
corresponding local time processes satisfy
\[
L^{X}_{t}\geq L^{X^{*}}_{t} \qquad\mbox{for
all } t\geq0.
\]
\end{prop}

\subsection{The sandwich lemma}
The discrete \emph{SIR}-$d$ process $X^N$ is naturally associated
with a \emph{branching envelope}. This is a nearest-neighbor branching
random walk $\overline{X}_n^N$ with initial condition
$\overline{X}^{N}_{0}=X^{N}_{0}$ and offspring distribution
Bin$((2d+1)N,p_N^\theta(0,0))$ that dominates $X^{N}_{n}$, that is,
such that for each $n\geq0$ and $x\in\zz{Z}^{d}$,
\[
X^{N}_{n} (x)\leq\overline{X}^{N}_{n}
(x).
\]
See Section~1.6 of \citet{lalley09} for details concerning the
construction. Since branching random walks, after Feller--Watanabe
rescaling, converge weakly to super-Brownian motions, the
vector-valued processes $(X^{N}, \overline{X}^{N})$, similarly
rescaled, have marginals that converge weakly. It follows that after
rescaling the laws of the vector-valued processes $( X^{N},
\overline{X}^{N})$ are tight. Hence, any subsequence has a weakly
convergent subsequence, and the limit process $(X, \overline{X})$ must
satisfy $X_{t}\leq\overline{X}_{t}$. The component processes $X$
and $\ol{X}$ of any such weak limit must be a measure-valued
epidemic [i.e., a solution of the martingale problem
\eqref{eqDWmgSIRgen} with $\gamma=1$] and a super-Brownian motion
with drift $\theta$, respectively. This gives another proof of
Proposition \ref{propBRWenvelope}.
Next is a result which also gives a \emph{lower bound} on the epidemic process.

\begin{lemma}\label{lemmasandwich0}
For any measure $\mu\in\mathcal{M}_{c} (\zz{R}^{d})$ satisfying
Assumption \ref{asmtpinireg2}, any $\kappa>0$,
$\theta\in\zz{R}$, and any
function $K\in C_{p} (\zz{R}^{d},\zz{R_{+}})$ there exist,\vspace*{1pt}
on some probability space, measure-valued processes $X, \ol
{X},\ul{X}$ with common initial state $X_{0}=\ol{X}=\ul{X}=\mu$
such that
\[
\underline{X}_t \le X_t\le\overline{X}_t\qquad
\mbox{for all } t\leq\tau,
\]
where
\[
\tau=\inf \Bigl\{t\geq0\dvtx \Bigl(\max_x K(x) \Bigr) +
\Bigl( \max_x L^{X}_t(x) \Bigr) \geq
\kappa \Bigr\},
\]
with the following laws:
\begin{longlist}[(iii)]
\item[(i)]$X$ is a spatial epidemic with local suppression rate $K$,
transmission rate~$\theta$, branching rate $\gamma=1$ and inhibition parameter $\beta
=1$;
\item[(ii)]$\ol{X}$ is a super-Brownian motion with drift $\theta$;
\item[(iii)]$\ul{X}$ is a super-Brownian motion with drift
$\theta-\kappa$.
\end{longlist}
%
\end{lemma}

The proof will once again be based on discrete approximations. We
shall build approximating discrete epidemic processes that satisfy the
analogous sandwich relationship. The construction makes use of the
following lemma. First, observe that in a discrete SIR epidemic, when
two
infected individuals simultaneously attempt to infect
the same susceptible individual, all but one of the attempts fail;
call such an occurrence a \emph{collision}. [Hence, e.g., when three
infected individuals simultaneously attempt to infect
the same susceptible individual, then the number of collisions would be
${3\choose2} = 3$.]

By slightly modifying the proof of Lemma 9 in \citet{lz10}, in
particular, by noticing that the statement right above equation (62)
therein also holds for the way that we count the number of collisions
here, we get the following:
%
\begin{lemma}[{[A slight variant of Lemma 9 and equations (62)--(64) in
\citet{lz10}]}]\label{lemmacollisions}
For each pair $(n,x)\in\zz{N}\times\zz{Z}^{d}$, let
$\Gamma^{N}_{n} (x)$ be the number of collisions at site $x$ and
time $n$ in the \emph{SIR}-$d$ epidemic with village size $N$.
Assume that the hypotheses
\eqref{hypinitialConfig}--\eqref{hypsmoothness} of Proposition
\ref{propSIRconv} are satisfied.
Then for any fixed $T\geq0$,
%
%
\begin{equation}
\label{eqcollisionEstimate} E \sum_{n\leq N^\alpha T}\sum
_{x} \Gamma^{N}_{n} (x) =o
\bigl(N^{\alpha} \bigr).
\end{equation}
\end{lemma}

A direct consequence of the previous lemma is that if we
define a \textit{Modified SIR} process in the following way:

\textit{Modified SIR process}. At any site/time $(x,t)$,
each particle produces\break  Bin$((N-K^N(y)-R_t^N(y)),
p_N^\theta(x,y))$ number of offspring at neighboring sites $y$, where
$R_k^N(y)=\sum_{i<k}X^N_i(y)$,
then the Modified SIR process can be constructed together
with the original SIR process, in much the same way as for the
branching envelope with the original SIR process, such that: (1)~the
Modified SIR process always dominates the original SIR process; and (2)~the
discrepancy $D_{t} (x)\ge0$ between them at site $x$ and time $t$
satisfies that for any $T>0$,
%
%
\begin{equation}
\label{eqdiscrepancy} \max_{t\leq N^\alpha T}\sum
_x D_{t} (x)=o_{P}
\bigl(N^{\alpha} \bigr).
\end{equation}
Therefore after the Feller--Watanabe scaling as in Proposition
\ref{propSIRconv}, the modified SIR process will
converge to the same limit as in Proposition \ref{propSIRconv}.

[To show \eqref{eqdiscrepancy}, observe that if we let $D_n:=\sum_x
D_{n} (x)$, and $\wt{D}_n = D_n/(1+\theta/N^\alpha)^n$, then $\wt{D}_n$
is a sub-martingale: $E(\wt{D}_{n+1}|\sF_n) \geq\wt{D}_n$, with the
inequality due to collisions at generation $n+1$. Further note that for
any $T>0$,
\begin{eqnarray*}
E(\wt{D}_{[N^\alpha T]})&\leq&\frac{E \sum_{n\leq N^\alpha T}
((1+\theta/N^\alpha)^{[N^\alpha T]-n}\cdot\sum_{x} \Gamma
^{N}_{n} (x) )}{(1+\theta/N^\alpha)^{[N^\alpha T]}}
\\
&=&O \biggl(E \biggl(\sum_{n\leq N^\alpha T}\sum
_{x} \Gamma^{N}_{n} (x) \biggr)
\biggr)= o \bigl(N^{\alpha} \bigr).
\end{eqnarray*}
Equation~\eqref{eqdiscrepancy} then follows from the Doob's martingale
inequality.]

We now prove Lemma \ref{lemmasandwich0}.
\begin{pf*}{Proof of Lemma \ref{lemmasandwich0}}
We shall build approximating particle systems that satisfy the
analogous sandwich relationship. Choose $X_0^N$ and $K^N$ such
that \eqref{hypinitialConfig}--\eqref{hypK} are satisfied.
The super-solution $\ol{X}^N$ is a nearest-neighbor branching random
walk with
initial configuration $X_0^N$ and such that at any site/time
$(x,t)$, each particle at site $x$ produces Bin$(N,
p_N^\theta(x,y))$ number of offspring at neighboring sites $y$. By
Watanabe's theorem, $\ol{X}^N$ converges to the desired
$\ol{X}$. As noted above, the {Modified SIR} ${X}^N$ will
approximate ${X}$.
Define the stopping time
\[
\tau^N=\min \Bigl\{t\geq0\dvtx \Bigl(\max_xK^N(x)
\Bigr) + \Bigl( \max_xR^{N}_t(x)
\Bigr) \geq\kappa{N^{(\alpha(2-d/2))}} \Bigr\}.
\]
We may assume that $\kappa>\sup_xK(x)$ (or the result is trivial).
The sub-solution $\ul{X}^N$ is a nearest-neighbor branching random walk
with initial configuration $X_0^N$ and
such that at any site/time $(x,t)$, each particle at $x$ produces
Bin$([N- \kappa N^{\alpha(2-d/2)}], p_N^\theta(x,y))$ number of
offspring at neighboring sites $y$. By Watanabe's theorem $\ul{X}^N$
converges weakly to the super-Brownian motion $\ul{X}$. It is clear
that before time $\tau^N$, the three processes $\ol{X}^N$, ${X}^N$ and
$\ul{X}^N$ can be built on a common probability space such that
\[
\ul{X}^N_t \le{X}^N_t\le
\ol{X}^N_t \qquad\mbox{for all } t\leq\tau^N.
\]
By Skorokhod's representation theorem and Proposition \ref
{propSIRconv} we
may assume $\liminf_N\tau^N\ge\tau$ a.s. Here we use $\max_xK^N(x)/N^{(\alpha(2-d/2))}\rightarrow\max_x K(x)$ and the fact that
there is a greater than or equal to sign in the definition of $\tau$.
By taking limits in the above, along a subsequence if necessary to get
joint convergence, we complete the proof.
\end{pf*}

\subsection{Scaling}
It will be necessary, in some of the arguments to follow, to rescale
time and/or space. When a super-Brownian motion, or more generally a
solution to the martingale problem \eqref{eqDWmgSIRgen} is
rescaled, its diffusion rate may change, that is, the Laplacian in
\eqref{eqDWmgSIRgen} may be multiplied by a constant $\alpha
$. The resulting martingale problem is as follows:
%
%
\begin{eqnarray}
\label{eqsirpar} X_t(\phi)&=& X_0(\phi) +
\frac{\alpha}{2}\int_0^t X_s(
\Delta\phi) \,ds + \theta\int_0^t
X_s(\phi) \,ds
\nonumber
\\[-8pt]
\\[-8pt]
\nonumber
&&{}- \int_0^t
\bigl(X_s(K\phi)+\beta X_s (L_s\phi)
\bigr) \,ds + \sqrt{\gamma} M_t(\phi),
\end{eqnarray}
where $\alpha,\beta,\gamma>0$ and $\theta\in\zz{R}$ are
constants, $K\in C_p(\zz{R}^d, \zz{R}^+)$, and $M_t(\phi)$ is a
continuous martingale with
quadratic variation $[M(\phi)]_t=\int_0^t X_s(\phi^2) \,ds$. As usual,
$L_{t}$ is the local time density of the process $X$. We shall
refer to this martingale problem as $(\mathrm{MP})^{\theta,\beta,\gamma,\alpha}_{\mu,K}$ and continue to write $(\mathrm{MP})^{\theta,\beta,\gamma
}_{\mu,K}$ if
$\alpha=1$.

\begin{lemma}\label{lemmascaling}
Let $X$ solve the martingale problem $(\mathrm{MP})^{\theta,\beta,\gamma
,\alpha
}_{\mu,K}$. For any constants $a,b,c> 0$, define a new
measure-valued process $U$ by
%
%
\begin{equation}
\label{eqfirstRescaling} \qquad U_t(\psi) = c \int_x
\psi(bx) X_{at}(dx)\qquad \mbox{for all bounded measurable } \psi\mbox{ on }
\zz{R}^d.
\end{equation}
Then $U_{t}$ solves the martingale problem $(\mathrm{MP})^{\theta',\beta
',\gamma
',\alpha'}_{\mu',K'}$ with parameters
\begin{eqnarray*}
\theta'&=&a\theta, \qquad\beta'=\frac{a^{2}b^{d}\beta}{c},\qquad
\gamma'=ac\gamma, \qquad\alpha'=ab^{2}\alpha,\\
K'(x)&=&aK(x/b),
\end{eqnarray*}
and initial measure defined by
$
\int\psi(x) \mu'(dx)= c\int\psi(bx) \mu(dx).
$
The local time densities $L=L^{X}$ and $L^{U}$ are related by
%
%
\begin{equation}
\label{eqltscaling} L^U_s(x) = \frac{c}{ab^d}L_{as}
\biggl(\frac{x}{b} \biggr)\qquad \mbox{ for all } x, t.
\end{equation}
\end{lemma}
\begin{pf}
This is by routine calculations.
\end{pf}

\begin{remark}
Based on the above result, one can show that by choosing $a, b$ and $c$
appropriately, the scaling as in \eqref{eqfirstRescaling} would
transform the martingale problem $(\mathrm{MP})^{\theta,\beta,\gamma,\alpha
}_{\mu,0}$ into $(\mathrm{MP})^{\theta',1,1,1}_{\mu',0}$; in other words,
the model is a
one-parameter model.
\end{remark}

\section{Preliminaries on (supercritical) super-Brownian motions}
\label{secpreliminariessbm}

In this section we present some regularity results for super-Brownian
motions and
their local times. The results are only of interest, and only will be
used for $d>1$, and
so we assume $d=2$ or $3$ throughout this section.

\subsection{Uniform regularity of super-Brownian motions}

In this and the following subsection, let $Y=Y^\mu$ be a (driftless)
super-Brownian
motion with initial state $Y_{0}=\mu$, and let $P_\mu= P_{\mu
,0}^{0,0,1}$ be its law. Denote by
$B_r(y)$ the open Euclidean ball in $\zz{R}^d$ centered at $y$
of radius $r$, and for any measure $\mu\in\mathcal{M}(\zz{R}^d)$,
define
%
%
\begin{equation}
\label{dfnD} D(\mu,r)=\sup \bigl\{\mu \bigl(B_r(y) \bigr)\dvtx y\in
\zz{R}^d \bigr\}.
\end{equation}
For any function $\phi$ and any measure $\mu\in\mathcal{M}(\zz
{R}^{d})$, set $\mu\phi= \mu*\phi$, where $*$ denotes convolution. In
particular,
for any $t\geq0$,
\[
(\mu p_t) (x)=\int p_{t}(x-y)\mu(dy)\quad \mbox{and}\quad (\mu
q_t) (x)=\int_0^t\int
p_{s}(x-y)\mu(dy) \,ds,
\]
where $p_{t}$ and $q_t$ are the Gauss kernel and the integrated Gauss kernel
in \eqref{eqheatKernel}, respectively.
For $r\in(0,1]$ let
\[
h(r) = \sqrt{r \log\frac{1}{r}}\quad \mbox{and}\quad \phi(r)=r^2
\biggl(1+\log\frac{1}{r} \biggr)^2.
\]
Finally, for $T,r_0,C_i>0$ introduce the event
\[
G_T(r_0; C_1, C_2)= \bigl\{
D(Y_t,r) \leq C_1 \bigl(D(\mu p_t,
C_2 r) + \phi(r) \bigr) \mbox{ for all } r\leq r_0\mbox{
and } t\leq T \bigr\}.
\]

The following lemma is an easy consequence of the proof of Theorem
4.7 in \citet{BEP91}.

\begin{lemma}\label{lemmasbmreg}
If $K\ge1$ there are constants $C_1,C_2>0$ (depending on $K$),
and for any $T>0$ there is an $r_0(K,T)\in(0,1]$ such that for all
$\lambda\ge1$ and any $\mu$ with $|\mu|=\lambda$,
%
%
\begin{equation}
\label{eqsbmreg} P_\mu \bigl( G_T \bigl(r_0
e^{-\lambda}; C_1, C_2 \bigr) \bigr)
\geq1-e^{-K\lambda}.
\end{equation}
\end{lemma}

\begin{pf}
This is a quantitative version of Theorem 4.7 of \citet{BEP91}. The
proof of that result shows for $K\ge1$ there are constants
$C_1,C_2,C_3\ge1$ such that for all $\lambda\ge1$, $T>0$,
$n\in\zz{N}$ and $\mu$ with $|\mu|=\lambda$,
%
%
\begin{eqnarray}
\label{BEPbnd} P_\mu \bigl(G_T \bigl(h
\bigl(2^{-n} \bigr);C_1,C_2
\bigr)^c \bigr)&\le& C_3(T+1) (\lambda+1)2^{-Kn}+C_3
\lambda2^{-Kn}
\nonumber
\\[-8pt]
\\[-8pt]
\nonumber
& \le& C_3(2T+3)\lambda2^{-Kn}.
\end{eqnarray}
Here one has to chase constants a bit to check that the constant
$c_{2.2}$ in the proof of the above theorem in \citet{BEP91} may be
taken to be as large as you like at the cost of our $C_2$ and
their $c_{4.2}$ being large. The latter can then be handled in the
key bound in the proof of Theorem 4.7 in \citet{BEP91} by taking
our $C_1$ large enough. Now choose $n_0\ge2$ in $\zz{N}$ so that
%
%
\begin{equation}
\label{n0defn} C_3(2T+3)2^{-Kn_0}\le e^{-K\lambda}
\lambda^{-1}<C_3(2T+3)2^{-Kn_0+K}.
\end{equation}
The above definition implies
\begin{eqnarray*}
h \bigl(2^{-n_0} \bigr) &=&2^{-n_0/2}(n_0
\log2)^{1/2} \ge \biggl(\frac{e^{-K\lambda}\lambda^{-1}}{2^KC_3(2T+3)} \biggr)^{1/(2K)} [2\log2
]^{1/2}\\
 &\ge& r_0(K,T)e^{-\lambda},
\end{eqnarray*}
where in the last inequality we used the simple fact that for all
$\lambda, K\geq1$, $\lambda^{-1/(2K)}\geq\lambda^{-1}\geq
\exp(-\lambda/2)/2$.
Therefore \eqref{BEPbnd} and \eqref{n0defn} imply that
\[
P_\mu \bigl(G_T \bigl(r_0e^{-\lambda};C_1,C_2
\bigr)^c \bigr)\le e^{-K\lambda}.
\]
\upqed\end{pf}

To formulate the next result we introduce the following:
%
\begin{dfn}\label{dfnmassreg}
For any positive constants $A, \lambda$ and $r_0$, with $r_0\le1$, and
any measure $\mu\in\mathcal{M}(\zz{R}^d)$, we say that $\mu$ is
$(A,\lambda,r_0)$-\emph{admissible} if
\[
D(\mu,r) \leq A r^2 \biggl(\lambda r^{d-2}+ \biggl(1+\log
\frac{1}{r} \biggr)^2 \biggr)\equiv\psi(r) \qquad\mbox{for all } r
\leq r_0 e^{-\lambda}.
\]
\end{dfn}

\begin{cor}\label{corsbmreg}
For any fixed $K\ge1$ and $T>0$, there exist positive constants
$A=A(K,T)$ and $r_0=r_0(K,T)\le1$, such that for all $\lambda\ge1$
and $\mu$ with $|\mu|=\lambda$,
%
%
\begin{equation}
\label{eqsbmreg2} P_\mu \bigl( Y_T \mbox{ is } (A,
\lambda,r_0)\mbox{-admissible} \bigr)\geq1-e^{-K\lambda}.
\end{equation}
\end{cor}
\begin{pf}This follows from Lemma \ref{lemmasbmreg} by noticing that
\[
D(\mu p_T, C_2 r)\leq C_4(T)\lambda
r^{d}.
\]
\upqed\end{pf}

\subsection{Local time densities of super-Brownian motions}
Recall [equation \eqref{eqheatKernel}] that $p_{t} (x)$ and $q_{t}
(x)$ are the Gauss kernel and the integrated Gauss kernel, respectively.

\begin{lemma}\label{lemmaKt}Suppose that $\mu\in\mathcal{M}(\zz{R}^d)$
satisfies $|\mu|=\lambda$ and
is $(A,\lambda,r_0)$-admissible for some constants $A$ and $r_0$.
For any $0<\beta<2-d/2$ and any fixed $T>0$, define
%
%
\begin{eqnarray}
\label{dfnXii} \Xi_1(T)&:=&\max_x (\mu
q_{T} ) (x)\quad \mbox{and}
\nonumber
\\[-8pt]
\\[-8pt]
\nonumber
 \Xi_2(T)&:=&\max
_x \int_0^T \int
s^{-\beta/2} p_{s}(x-y) \mu(dy) \,ds.
\end{eqnarray}
Then there exists a constant $A'=A'(A, r_0, T,\beta)>0$ such that for
both $i=1,2$ and
for all $\lambda\ge1$,
%
%
\begin{equation}
\label{eqltexpmax} \Xi_i(T)\leq \kappa_{d}(\lambda):=
\cases{ 
A' \lambda^2, &\quad $
\mbox{when } d=2,$\vspace*{2pt}
\cr
A' \lambda^2e^{\lambda},
&\quad $\mbox{when } d=3.$}
\end{equation}
\end{lemma}
\begin{pf}
We shall only prove the result for $\Xi_1(T)$; the proof for
$\Xi_2(T)$ is similar. Let $\widetilde{r}_0= r_0e^{-\lambda}$.
We first deal with the integral for $t\in[\widetilde{r}_0^{8/3},T]$:
to do so, for any fixed $x\in\zz{R}^d$ we cover $\zz{R}^d$ with
balls $B_i$ of radius $\widetilde{r}_0$ and with center of distance
$d_i=k \widetilde{r}_0$ to $x$ for some $k\in\zz{Z}_{\geq0}$.
Then for any $t>0$,
\begin{eqnarray*}
(2\pi)^{d/2}(\mu p_{t}) (x)&=&\int t^{-d/2} \exp
\biggl(-\frac
{|x-y|^2}{2t} \biggr)\mu(dy)
\\
&\leq&\sum_{i} \int_{y\in B_i}
t^{-d/2} \exp \biggl(-\frac{\min
(0,d_i-\widetilde{r}_0)^2}{2t} \biggr) \mu(dy).
\end{eqnarray*}
The balls can be chosen in such a way that for any $k\geq3$ there
are at most $C(k-2)^{d-1}$ balls with center of distance
$k\widetilde{r}_0$ to $x$. It is then easy to see that there exist
constants $C_i$ such that
\begin{eqnarray*}
&&\int t^{-d/2} \exp \biggl(-\frac{|x-y|^2}{2t} \biggr) \mu(dy)
\\
&&\qquad\leq C_1 t^{-d/2} \psi(\widetilde{r}_0) +
C_2\sum_{k=3}^{\infty}
t^{-d/2} \exp \biggl(-\frac{(k-1)^2\widetilde{r}_0^2}{2t} \biggr) \cdot
(k-2)^{d-1} \psi(\widetilde{r}_0)
\\
&&\qquad\leq C_1 t^{-d/2} \psi(\widetilde{r}_0) +
C_2\psi(\widetilde{r}_0)\int_1^{
\infty} t^{-d/2}a^{d-1} \exp \biggl(-\frac{a^2 \widetilde{r}_0^2}{2t} \biggr)
\,da
\\
&&\qquad\leq C_1 \psi(\widetilde{r}_0)t^{-d/2} +
C_3 \psi(\widetilde{r}_0)\widetilde{r}_0^{-d}
\\
&&\qquad\leq\cases{ C_1 \psi(\widetilde{r}_0)t^{-1}
+ C_3 \bigl(\lambda+ \bigl(1+\log(1/\widetilde{r}_0)
\bigr)^2 \bigr), &\quad$\mbox{when } d=2,$\vspace*{2pt}
\cr
C_1
\psi(\widetilde{r}_0)t^{-3/2} +C_3 \bigl(
\lambda+ \bigl(1+\log(1/ \widetilde{r}_0) \bigr)^2/
\widetilde{r}_0 \bigr), &\quad$\mbox{when } d=3.$}
\end{eqnarray*}
Therefore
\begin{eqnarray*}
&&\int_{\widetilde{r}_0^{8/3}}^T \int t^{-d/2} \exp
\biggl(-\frac
{|x-y|^2}{2t} \biggr) \mu(dy) \,dt
\\
&&\quad\leq\cases{ 
C_4 \psi(\widetilde{r}_0)
\bigl(\log T +\log(1/ \widetilde{r}_0) \bigr) + C_3 T
\bigl(\lambda+ \bigl(1+\log(1/\widetilde{r}_0) \bigr)^2
\bigr), \vspace*{2pt}\cr
\hspace*{242pt}\mbox{when } d=2,\vspace*{2pt}
\cr
C_4 \psi(
\widetilde{r}_0)\widetilde{r}_0^{ (-4/3)}
+C_3 T \bigl(\lambda+ \bigl(1+\log(1/\widetilde{r}_0)
\bigr)^2/\widetilde{r}_0 \bigr), \qquad \mbox{when } d=3,}
\end{eqnarray*}
which can be bounded by $\kappa_d(\lambda)$ for all $\lambda\ge1$
for an appropriate choice of $A'$. Now we deal with the
integral for $t\in[0,\widetilde{r}_0^{8/3}]$,
\begin{eqnarray*}
&&\int_0^{\widetilde{r}_0^{8/3}}\int t^{-d/2} \exp
\biggl(-\frac
{|x-y|^2}{2t} \biggr) \mu(dy) \,dt
\\
&&\qquad=\int_0^{\widetilde{r}_0^{8/3}} \biggl(\int_{|x-y|\leq t^{3/8}}
+\int_{|x-y|> t^{3/8}} \biggr) t^{-d/2} \exp \biggl(-
\frac{|x-y|^2}{2t} \biggr) \mu(dy) \,dt
\\
&&\qquad\leq \int_0^{\widetilde{r}_0^{8/3}}t^{-d/2}\psi
\bigl(t^{3/8} \bigr) \,dt + \lambda\int_0^{\widetilde{r}_0^{8/3}}
t^{-d/2} \exp \biggl(-\frac{1}{2t^{1/4}} \biggr) \,dt
\\
&&\qquad\leq C_5+ C_6\lambda.
\end{eqnarray*}
\upqed\end{pf}

The following lemma is implicit in \citet{Sugitani89}.
%
\begin{lemma}\label{lemmaltreg}
Suppose that $\mu\in\mathcal{M}(\zz{R}^d)$ and for all $t>0$,
$\Xi_1(t)<\infty$,
and for some $0<\beta< 2-d/2$ and all $t>0$,
$\Xi_2(t)<\infty$, where $\Xi_1$ and $\Xi_2$ are defined in~\eqref
{dfnXii}.
Define
\[
Z_t(x)=L_t(x) - (\mu q_t) (x).
\]
Then for any $T>0$, there exist
constants $\eta_0 = \eta_0(\beta,T)>0, C_i=C_i(\beta,T)>0$ such that
for all $0\leq\eta<\eta_0$
and $t\leq T$,
%
%
\begin{eqnarray}
\label{eqltexpdiff} E_\mu\exp \biggl( \frac{\eta(Z_t(a) - Z_t(b))}{|a-b|^\beta} \biggr)
\leq\exp \bigl(C_1 \Xi_2(2t) \cdot\eta \bigr)
\nonumber
\\[-8pt]
\\[-8pt]
\eqntext{\mbox{for
all } 0<|a-b|\leq2}
\end{eqnarray}
and
%
%
\begin{equation}
\label{eqltexpldp} E_\mu\exp \bigl( \eta Z_t(a) \bigr)
\leq\exp \bigl(C_2 \Xi_1(2t)\cdot\eta \bigr)\qquad\mbox{for
all } a\in\zz{R}^d.
\end{equation}
\end{lemma}

\begin{pf}
The second claim \eqref{eqltexpldp} follows from Lemma 3.4 in
\citet
{Sugitani89}.

To prove \eqref{eqltexpdiff}, following (3.34) in \citet
{Sugitani89}, for a random variable $X$ we say that
\[
E\exp(\eta X) = \exp \Biggl(\sum_{n=1}^\infty
c_n\eta^n \Biggr)
\]
holds formally if for all $k\geq1$, $E|X|^k<\infty$ and
\[
EX^k= \biggl(\frac{d^k (\exp(\sum_{n=1}^k c_n\eta^n
) )}{d\eta^k} \biggr)\bigg\rrvert _{\eta=0}.
\]
By (3.38), (3.45) and (3.48) in \citet{Sugitani89}, we have
formally
%
%
\begin{equation}
\label{eqmgf} E_\mu\exp \bigl(\eta \bigl(Z_t(a) -
Z_t(b) \bigr) \bigr) = \exp \Biggl(2\sum
_{n=2}^\infty \biggl(\frac{\eta}{2}
\biggr)^n \bigl\la\mu, \nu_n(t, \cdot)\bigr\ra \Biggr),
\end{equation}
where for $n\geq2$, and $x\in\zz{R}^d$,
%
%
\begin{eqnarray}
\label{nundefn} \qquad&&\bigl|\nu_n(t,x)\bigr|
\nonumber
\\[-8pt]
\\[-8pt]
\nonumber
&&\qquad\leq b_n
\cdot|a-b|^{n\beta} t^{2-(d+\beta)/2} \int_0^{2t}
s^{-\beta/2} \bigl(p_s(a-x) + p_s(b-x) \bigr)
\,ds,
\end{eqnarray}
and $\{b_n\}$ are defined inductively as follows:
\[
b_1=C_4>0, \qquad b_n = C_5\sum
_{k=1}^{n-1} b_k
b_{n-k}.
\]
Using the proof of Lemma 3.4 in \citet{Sugitani89}, if we let
$f(\eta)=\sum_{n=1}^\infty b_n\eta^n$, then for some $\delta>0$,
%
%
\begin{eqnarray}
\label{fbounds} f(\eta)-C_4\eta= C_5 f(
\eta)^2,\qquad f(\eta)=\frac{1-\sqrt{1-4C_4C_5\eta
}}{2C_5}\leq C\eta
\nonumber
\\[-8pt]
\\[-8pt]
 \eqntext{\mbox{for } 0\leq\eta
\leq\delta.}
\end{eqnarray}
This shows that $\sum_n b_n\eta^n$ has a positive radius of
convergence, and the formal equation \eqref{eqmgf} is indeed an
equation when $\eta$ is sufficiently close to 0 because the Taylor
series for the analytic function on the right-hand side is given by the
left-hand side.
Relation \eqref{eqltexpdiff} then follows easily from the upper
bounds \eqref{nundefn} and~\eqref{fbounds}.
\end{pf}

\begin{cor}\label{corltlrp}
Under the assumptions of the previous lemma, for any fixed $T>0$, there
exist constants $\eta_0=\eta_0(\beta, T)>0,
C_i=C_i(\beta, T)>0$ such that for all $0<\eta<\eta_0$,
%
%
\begin{eqnarray}
\label{eqltdiff} E_\mu\exp \biggl(\frac{\eta|L_T(a) -
L_T(b)|}{|a-b|^\beta} \biggr) \leq2
\exp \bigl(C_1 \Xi_2(2T) \eta \bigr)
\nonumber
\\[-8pt]
\\[-8pt]
\eqntext{\mbox{for all }
0<|a-b|\leq2}
\end{eqnarray}
and
%
%
\begin{equation}
\label{eqltldp} E_\mu\exp \bigl(\eta L_T(a) \bigr) \leq
\exp \bigl(C_2 \Xi_1(2T) \eta \bigr)\qquad \mbox{for all } a\in
\zz{R}^d.
\end{equation}
\end{cor}
\begin{pf} Relation \eqref{eqltldp} follows easily from \eqref
{eqltexpldp}. As for
\eqref{eqltdiff}, by \eqref{eqltexpdiff} and the elementary
inequality $e^{|x|}\le e^x+e^{-x}$,
\begin{eqnarray*}
&& E_\mu\exp \biggl(\frac{\eta|L_T(a) - L_T(b)|}{|a-b|^\beta} \biggr) \\
&&\qquad\leq 2 \exp
\bigl(C_1 \Xi_2(2T) \eta \bigr)\cdot\exp \biggl(
\frac{\eta| (\mu q_T)(a) - (\mu q_T)(b)|}{|a-b|^\beta} \biggr).
\end{eqnarray*}
By (3.44) in \citet{Sugitani89} we have for all $x,y$ and $t>0$,
\[
\bigl|p_t(x)-p_t(y)\bigr|\le c(\beta)t^{-\beta/2}|x-y|^\beta
\bigl(p_{2t}(x)+p_{2t}(y) \bigr),
\]
and so
\begin{eqnarray*}
&&\bigl|(\mu q_T) (a) - (\mu q_T) (b) \bigr| \\
&&\qquad\leq  C
|a-b|^\beta\int_0^T \int
_x t^{-\beta/2} \bigl(p_{2t}(a-x) +
p_{2t}(b-x) \bigr) \mu(dx) \,ds \\
 &&\qquad \leq C|a-b|^\beta
\Xi_2(2T).
\end{eqnarray*}
\upqed\end{pf}

\begin{lemma}\label{lemmafieldmax} Suppose that $\Upsilon(x)$ is an
\mbox{almost surely} continuous
random field on $\zz{R}^d$
such that for some $\eta>0$ and $\beta>0$,
%
%
\begin{equation}
\label{eqreg} \qquad\cases{\displaystyle E\exp \biggl(\frac{\eta|\Upsilon(a) -
\Upsilon
(b)|}{|a-b|^\beta} \biggr) \leq
C_1,&\quad $\mbox{for all } 0<|a-b|\leq\sqrt{d}; \mbox{and}$\vspace*{2pt}
\cr
 E\exp \bigl(\eta\Upsilon(a) \bigr) \leq C_2, & \quad $\mbox{for all } a
\in\zz{R}^d.$}
\end{equation}
Let $M_L=\max_{a\in Q_L(0)} \Upsilon(a)$ [$Q_L(0)$ is the cube of side
length $L$ centered at 0]. Then for all $L\in\zz{N}$ and $m\geq0$,
%
%
\begin{equation}
\label{eqltmaxldp} P(M_L\geq m) \leq \bigl(C_1e^{2d/\beta}+C_2
\bigr)L^d \exp \biggl(-\frac{\eta
m}{1+\gamma} \biggr),
\end{equation}
where $\gamma=8d^{\beta/2}$. In
particular, for any $K>0$, there exists $C>0$, depending only on $K$,
$C_1$, $C_2$ and $\beta$, such that
for all $L\geq1$ and $\lambda\geq1$,
%
%
\begin{equation}
\label{eqltmax} P \biggl(M_L\geq C \frac{\lambda+\log L}{\eta} \biggr) \leq
e^{-K\lambda}.
\end{equation}
\end{lemma}
\begin{pf}
Inequality \eqref{eqltmax} follows by plugging $C\frac{\lambda
+\log
L}{\eta}$ as $m$ into \eqref{eqltmaxldp} and noticing that
when $C$ is large enough, the factor $\exp(-\frac{C\lambda
}{2(1+\gamma)}-\frac{C\log L}{1+\gamma} )$ would be smaller than
$[(C_1e^{2d/\beta}+C_2)L^d]^{-1}$ for all $\lambda,L\geq1$.
Furthermore, it suffices to prove \eqref{eqltmaxldp} for $L=1$ as
the results then follow trivially by dividing $Q_L(0)$ into unit cubes.
We apply Lemma 1 of \citet{Garsia72} with
$p(u)=u^{\beta}$, $\Psi(u)=\exp(\frac{\eta
|u|}{d^{\beta/2}} ), Q_1=Q_1(0)$ and
\[
 B=\int_{Q_1}\int_{Q_1}\exp
\biggl(\frac{\eta|\Upsilon(x)-\Upsilon
(y)|}{|x-y|^\beta} \biggr) \,dx\,dy.
\]
It is easy to check that this $B$ satisfies the hypothesis of
Lemma 1 in the above reference. That result,
or more precisely (10) in the proof,
implies
\begin{eqnarray*}
M_1 &\le&\Upsilon(0)+8\int_0^1
\Psi^{-1} \biggl(\frac{B}{u^{2d}} \biggr) \,d \bigl(u^{\beta}
\bigr)
\\
&\le&\Upsilon(0)+\frac{\gamma}{\eta} \biggl[\log(B)+2d\int_0^1
\log(1/u) \,d \bigl(u^{\beta} \bigr) \biggr]
\\
&=&\Upsilon(0)+\frac{\gamma}{\eta} \bigl[\log(B)+(2d/\beta) \bigr].
\end{eqnarray*}
Therefore for $x\ge0$,
\begin{eqnarray*}
P \bigl(M_1\ge(1+\gamma)x \bigr)&\le &C_2\exp(-\eta x)+P
\bigl(\log(B)\ge\eta x-(2d/\beta) \bigr)
\\
&\le& C_2\exp(-\eta x)+E(B)\exp \bigl((2d/\beta)-\eta x \bigr)
\\
&=& \bigl(C_2+C_1e^{2d/\beta} \bigr)e^{-\eta x},
\end{eqnarray*}
which is \eqref{eqltmaxldp} for $L=1$, and where \eqref{eqreg} is
used to see that $E(B)\le C_1$.
\end{pf}

Combining Lemma \ref{lemmaKt}, Corollary \ref{corltlrp} and Lemma
\ref{lemmafieldmax},
we obtain the following for the local time $L$ of the super-Brownian
motion $Y$.

\begin{prop}\label{propregmax}
For any $T>0$, $M>0$, $K>0$, $A>0$ and $r_0>0$, there exists a constant
$A''$, depending only on $(T, M, K, A, r_0)$, so that for all $\lambda
\ge1$ and all $(A,\lambda,r_0)$-admissible $\mu\in
\mathcal{M}(\zz{R}^d)$ satisfying $|\mu|=\lambda$,
the local time, $L_T(x)$, of the super-Brownian motion
$Y^\mu$ satisfies
\[
P_{\mu} \Bigl(\max_{|x|\leq Me^\lambda} L_T(x) \geq
A''\lambda \kappa_d(\lambda) \Bigr) \leq
e^{-K\lambda},
\]
where $\kappa_d(\lambda)$ is defined in \eqref{eqltexpmax}.
\end{prop}
\begin{pf}
By Lemma \ref{lemmaKt} and Corollary \ref{corltlrp}, for any
fixed $0\leq\eta<\eta_0$, if we let $\eta(\lambda)=\eta/\kappa
_d(\lambda)$,
then for some \emph{fixed} $C_1$ and $C_2$, for all $\lambda\geq1$,
the assumptions~\eqref{eqreg} hold for the random field $L_T(x)$ and
$0<\beta<2-d/2$ by
replacing $\eta$ with $\eta(\lambda)$. The conclusion then follows
from \eqref{eqltmax}.
\end{pf}

\subsection{Local time densities of supercritical super-Brownian motions}

In this and the following subsection, $Y=Y^\mu$ is a super-Brownian
motion with drift one starting at an initial state $\mu$,
let $P_\mu^1 = P_{\mu,0}^{1,0,1}$ be its law.\vspace*{1pt} Further denote by
$P_\mu
^0 = P_{\mu,0}^{0,0,1}$
the law of a (driftless) super-Brownian motion
starting at $\mu$. By Lemma \ref{lemmaGirsanov} we have
\[
\frac{dP_\mu^1}{dP_\mu^0}\bigg\rrvert_{\mathcal{F}^Y_t}:=\Phi_t=\exp
\biggl(M_t^0(1)-\frac{1}{2}\int
_0^t|Y_s| \,ds \biggr),
\]
where $M^i$ denote the martingale measure under $P_\mu^i$, $i=0,1$,
and therefore
%
%
\begin{equation}
\label{marts} M_t^0(1)=M_t^1(1)+
\int_0^t|Y_s| \,ds.
\end{equation}

\begin{lemma}\label{lemmaLR} For any $K\ge1$, $T>0$, $\lambda\ge1$
and $\mu\in\mathcal{M}(\zz{R}^d)$
with $|\mu|=\lambda$,
%
%
\begin{equation}
\label{eqLRbd} E_\mu^0 ( \Phi_T\cdot
\indic_{(\Phi_T\geq e^{K\lambda})} ) \leq\frac{5e^T}{K}.
\end{equation}
\end{lemma}
\begin{pf} Using \eqref{marts} we see the above expectation equals
\begin{eqnarray*}
E_\mu^0 (\Phi_T\cdot\indic_{ (M_T^0-
({1}/{2})\int_0^t|Y_s|\,ds\geq K\lambda)}
) &=& P_\mu^1 \biggl(M^1_T(1)+
\frac{1}{2}\int_0^t|Y_s|
\,ds \geq K\lambda \biggr)
\\
&\leq& \frac{4E_\mu^1 ((M^1_T(1))^2 )}{K^{2}\lambda^{2}} +\frac
{E_\mu^1
(\int_0^T|Y_s| \,ds )}{K\lambda}
\\
&=& \frac{4\lambda(e^T-1)}{K^2\lambda^2}+\frac{\lam(e^T-1)}{K\lam
}\le \frac{5e^T}{K}.
\end{eqnarray*}
\upqed\end{pf}

\begin{prop}\label{propsupersbmreg}
For any fixed $T>0$ and any $\vep>0$, there exist constants
$A=A(T,\vep)> 0$ and $r_0=r_0(T,\vep)\in(0,1]$ such that for all
$\lambda\ge1$ and all $\mu$ with $|\mu|=\lambda$,
%
%
\begin{equation}
\label{eqsupsbmreg} P_\mu^1 \bigl(Y_T
\mbox{ is } (A,\lambda,r_0)\mbox{-admissible} \bigr)\geq1-\vep.
\end{equation}
\end{prop}
\begin{pf} Let $G_{T,\lambda}$ denote the event in \eqref{eqsupsbmreg}.
For $T,\vep$ as above choose $K=K(\vep,T)\ge2$ so that
\[
\frac{10e^T}{K}+e^{-K/2}<\vep,
\]
and then choose $A$ and $r_0$ as in Corollary \ref{corsbmreg} for
this choice of $K$ and $T$, so that they depend ultimately on $T$
and $\vep$. Then the previous lemma and Corollary
\ref{corsbmreg} imply that
\begin{eqnarray*}
P_\mu^1 \bigl(G^c_{T,\lambda} \bigr)
&=&E_\mu^0(\Phi_T\cdot\indic_{G^c_{T,\lambda}})
\\
&\le& E_\mu^0(\Phi_T\cdot
\indic_{(\Phi_T\ge e^{\lambda
K/2})})+e^{\lambda K/2}P_\mu^0
\bigl(G^c_{T,\lambda} \bigr)
\\
&\le&10e^T/K+e^{-\lambda K/2}<\vep,
\end{eqnarray*}
where the choice of $K$ is used in the last inequality.
\end{pf}

The same reasoning, but now using Proposition \ref{propregmax}
in place of Corollary \ref{corsbmreg}, gives the following
proposition.
%
\begin{prop}\label{propsupersbmmax}
For any positive constants $T$, $M$ and $\vep$,
$A>0$ and $r_0>0$ there exists a constant
$A''$, depending only on $(T, M, \vep, A, r_0)$, so that for all
$\lambda\ge1$ and all $(A,\lambda,r_0)$-admissible\vadjust{\goodbreak} $\mu\in
\mathcal{M}(\zz{R}^d)$ satisfying $|\mu|=\lambda$,
the local time, $L_T(x)$, of
$Y^\mu$ satisfies
\[
P_\mu^1 \Bigl(\max_{|x|\leq Me^\lambda}
L_T(x) \leq A''\lambda \kappa
_d(\lambda) \Bigr) \geq1-\vep.
\]
\end{prop}

\subsection{Propagation of supercritical super-Brownian motions}

We continue to let $Y=Y^\mu$ be a super-Brownian motion with drift one
starting at $\mu$, and let
$P_\mu^1$ denote its law. Recall that for $x\in\zz{Z}^d$, $Q_r(x)$
denotes the cube of side length $r$ centered at~$x$, and
$Q(x):=Q_1(x)$.

\begin{lemma}\label{lemmapropspeed}
For any $T\geq1$ and $\vep>0$, there exists a constant
$M=M(T,\vep)>0$ such that for any $\lambda\geq e$ and any $\mu$ satisfying
$|\mu|=\lambda$ and\break  $\operatorname{ Supp}(\mu)\subseteq Q(0)$, we have
\[
P_\mu^1 \bigl( \operatorname{ Supp} \bigl(Y[0,T] \bigr)\subseteq
Q_{M\sqrt{\log
\lambda}}(0) \bigr)\geq1-\vep.
\]
\end{lemma}
\begin{pf}
This is a direct consequence of Theorem A in \citet{Pinsky95}.~%
\end{pf}

Write $\mathcal{N}(0)=\{x\in\zz{Z}^d\dvtx \Vert x\Vert _1=1\}$ for the nearest
neighbors of the origin in $\zz{Z}^d$. Fix a $T$ sufficiently large
such that
%
%
\begin{equation}
\label{eqT} \min_{x\in\mathcal{N}(0)} \min_{y\in Q(0)}
e^{T} ( \mathbf{1}_{Q(x)}*p_{T} ) (y) \geq2,
\end{equation}
where $p_{t} (x)$ is the Gauss kernel in \eqref{eqheatKernel}, and $*$
denotes convolution.

\begin{lemma}\label{lemmaregeneration}
For any $\vep>0$ and $T$ as above, there exists $\lambda_0=\lambda
_0(T,\vep)>0$ such that for any $\mu$ satisfying
$\operatorname{ Supp}(\mu)\subseteq Q(0)$ and $|\mu|=\lambda\geq\lambda_0$,
\[
P_\mu^1 \bigl(Y_T \bigl(Q(x) \bigr)\geq
\lambda\mbox{ for all } x\in\mathcal{N}(0) \bigr) \geq1-\vep.
\]
\end{lemma}
\begin{pf}
By a well-known moment formula [see, e.g., Exercise II.5.2 in \citet
{PerkinsSFnotes}],
together with the assumption that $\operatorname{ Supp}(\mu)\subseteq Q(0)$ and
\eqref{eqT}, for
any $x\in\mathcal{N}(0)$,
\[
EY_T \bigl(Q(x) \bigr) =e^{T}\langle\mu,
\mathbf{1}_{Q(x)}*p_{T} \rangle\ge2|\mu|=2\lambda
\]
and
\[
\var \bigl(Y_T \bigl(Q(x) \bigr) \bigr)\leq e^{2T}
\biggl\langle\mu, \int_0^{T} (
\mathbf{1}_{Q(x)}*p_{(T-s)} )^2* p_{s}
\,ds \biggr\rangle.
\]
Consequently, by the Chebyshev inequality,
\begin{eqnarray*}
P \bigl(Y_T \bigl(Q(x) \bigr)\leq\lambda \bigr)& \leq& P \biggl(
\bigl\llvert Y_T \bigl(Q(x) \bigr) - EY_T \bigl(Q(x)
\bigr) \bigr\rrvert\geq\frac{1}{2}EY_T \bigl(Q(x) \bigr)
\biggr)
\\
&\leq&\frac{4 e^{2T}
\langle\mu, \int_0^{T} (\mathbf{1}_{Q(x)}*p_{(T-s)} )^2* p_{s}  \,ds
\rangle} {
(2\lambda)^2}
\\
&\le &C_T\lambda^{-1}.
\end{eqnarray*}
The conclusion follows.\vadjust{\goodbreak}
\end{pf}

\section{A weak form of local extinction and its consequences}
\label{secweakLocalExtinction}

\subsection{A weak form of local extinction}\label{ssecweakLocalExtinction}

Let
$V_d=\pi^{d/2}/\Gamma(1+d/2)$ be the volume of a unit $d$-dimensional ball.

\begin{prop}\label{propLocextL1}
There exists $\kappa<\infty$
such that for any $\theta\in\zz{R}, \gamma>0$ and $K\in
C_p(\zz{R}^d, \zz{R}_+)$, if $X$ solves
$(\mathrm{MP})^{\theta,1,\gamma}_{\mu,K}$ and $\mu$ satisfies
Assumption {\ref{asmtpinireg1}}, then for any $N\geq1$,
%
%
\begin{equation}
\label{eqL1} E\langle L_{\infty},\mathbf{1}_{B_{N} (0)} \rangle\le
\frac{2|\mu|}{\kappa+ 2\theta^+}+V_d \bigl(\kappa+2\theta^+ \bigr)
(N+1)^d
\end{equation}
and
%
%
\begin{equation}
\label{eqL2} E \bigl\langle L^{2}_{\infty},
\mathbf{1}_{B_{N} (0)} \bigr\rangle \leq4|\mu| + V_d \bigl(
\kappa+2\theta^+ \bigr)^2 (N+1)^d.
\end{equation}
\end{prop}

\begin{pf}
First, observe that there exists $\kappa>0$ such that for any $N\geq
1$ there
exists a function $\varphi=\varphi_{N}\in C^{2}$ such that
%
%
\begin{equation}
\label{Lapbound} |\Delta\varphi|\leq\kappa\sqrt{\varphi} \quad\mbox{and}\quad
\mathbf{1}_{B_{N} (0)}\leq\varphi\leq\mathbf{1}_{B_{N+1} (0)}.
\end{equation}
For example, set $\varphi(x)=\psi(|x|)$ where $\psi=\psi_{N}\dvtx \zz{R}
\rightarrow[0,1]$ is a smooth, even function bounded above and below
by the indicators of $[-N-1,N+1]$ and $[-N,N]$, monotone on
$[-N-1,-N]$ (and therefore also on $[N,N+1]$), and such that (e.g.) $\psi(x)= (x+N+1)^{4}$ for $x\in[-N-1,-N-1/2]$.
Because $\varphi\in C^{2}_{c} (\zz{R}^{d})$, the martingale identity
\eqref{eqDWmgSIRgen} applies (with $\beta=1$), so after taking
expectations, we obtain
\[
E\langle X_{t},\varphi \rangle=\langle\mu,\varphi \rangle+
\frac
{1}{2} E \langle L_{t},\Delta\varphi \rangle +\theta E
\langle L_{t},\varphi \rangle -E \langle L_t, K \varphi
\rangle - E\int_{0}^{t} \langle
X_{s},L_{s}\varphi \rangle \,ds.
\]
A routine integration by parts shows that
\[
\int_{0}^{t} \langle X_{s},L_{s}
\varphi \rangle \,ds= \frac{1}{2} \bigl\langle L_{t}^{2},
\varphi \bigr\rangle.
\]
Since $|\Delta\varphi|\leq\kappa\sqrt{\varphi}$ and
$\langle X_{t},\varphi \rangle \geq0$, it follows that
%
%
\begin{eqnarray}
\label{eqL2rel} -2\langle\mu,\varphi \rangle&\leq&\kappa E \langle
L_{t},\sqrt{ \varphi} \rangle +2\theta^+ E\langle L_{t},
\varphi \rangle -E \bigl\langle L_{t}^{2},\varphi \bigr
\rangle
\nonumber\\
&\leq& \bigl(\kappa+2\theta^+ \bigr) E\langle L_{t},\sqrt{
\varphi } \rangle- E \bigl\langle L_{t}^{2},\varphi \bigr
\rangle
\\
\nonumber
&\leq& \bigl(\kappa+2\theta^+ \bigr) E\langle L_{t},\sqrt{
\varphi } \rangle- \bigl(V_d^{-1} (N+1)^{-d}
\bigr) \bigl(E \langle L_{t},\sqrt{ \varphi} \rangle
\bigr)^{2},
\end{eqnarray}
the last by Cauchy--Schwarz and the fact that $\varphi$ has support
contained in $B_{N+1} (0)$. This clearly gives an upper bound on
$E\langle L_t,\sqrt\phi\rangle$ that is independent of $t$. In
fact,
\eqref{eqL2rel} implies that
\begin{eqnarray*}
&&\biggl(E\langle L_{t},\sqrt{ \varphi} \rangle- \frac{1}{2}V_d
(N+1)^{d} \bigl(\kappa+2\theta^+ \bigr) \biggr)^2 \\
&&\qquad\leq
\frac{1}{4} \bigl(V_d (N+1)^{d} \bigr)^2
\bigl(\kappa+2\theta^+ \bigr)^2 + 2\langle\mu,\varphi \rangle
V_d (N+1)^{d}
\\
&& \qquad\leq\biggl(\frac{1}{2}V_d (N+1)^{d} \bigl(
\kappa+2 \theta^+ \bigr) + \frac{2\langle\mu,\varphi \rangle}{(\kappa
+2\theta^+ )} \biggr)^2,
\end{eqnarray*}
and hence
\begin{eqnarray*}
&&E\langle L_{t},\sqrt{ \varphi} \rangle\\
&&\qquad\leq\frac{1}{2}V_d
(N+1)^{d} \bigl(\kappa+2\theta^+ \bigr) + \biggl(\frac
{1}{2}V_d
(N+1)^{d} \bigl(\kappa+2\theta^+ \bigr) + \frac{2\langle\mu
,\varphi \rangle}{(\kappa+2\theta^+ )} \biggr)
\\
&&\qquad=V_d \bigl(\kappa+2\theta^+ \bigr) (N+1)^d+
\frac{2\mu(\phi)}{\kappa+2\theta^+}.
\end{eqnarray*}
Letting $t\rightarrow\infty$ yields
%
%
\begin{equation}
\label{eqL10} E\langle L_{\infty},\sqrt{\varphi} \rangle \le
V_d \bigl(\kappa+2\theta^+ \bigr) (N+1)^d+
\frac{2\mu(\phi)}{\kappa+2\theta^+}.
\end{equation}
Relation \eqref{eqL1} follows, since $\sqrt{\varphi}$ bounds the indicator
function of $B_{N} (0)$. Finally, by the second inequality in \eqref
{eqL2rel},
\[
E \bigl\langle L_{\infty}^{2},\varphi \bigr\rangle
\leq2|\mu| + \bigl( \kappa+ 2\theta^+ \bigr) E\langle L_\infty,\sqrt{
\phi}\rangle.
\]
Relation \eqref{eqL2} follows from \eqref{eqL10}.
\end{pf}

\begin{remark}
The above proposition easily shows that each of the terms on the
right-hand side of \eqref{eqDWmgSIR} converges a.s. as $t\to\infty$.
Therefore
$X_t(\phi)$ converges \mbox{a.s.} as $t\to\infty$ and clearly the
limit must
be $0$ by the above. This shows that $X_t(K)$ approaches $0$ as
$t\to\infty$ for all compact sets $K$ \mbox{a.s.} Our Theorem \ref{thmmlocext}
asserts a
much stronger result, namely that $X_t(K)=0$ for large enough $t$ a.s.
\end{remark}

\subsection{\texorpdfstring{Universality of the critical values $\theta_c$}
{Universality of the critical values theta c}}\label
{sseccvuniversal}

For any $\mu$ satisfying Assumption~\ref{asmtpinireg2}, if $X$
solves \eqref{eqDWmgSIR} with $\theta\le0$, then $P(X \mbox{
survives})=0$ because $X$ is dominated by a critical super-Brownian
motion (by Proposition \ref{propBRWenvelope}), which goes extinct
almost surely [see, e.g., equation (5.7) in \citet{Feller51} or
(II.5.12) in \citet{PerkinsSFnotes}].
Lemma \ref{lemmacomparisontheta} and Remark \ref{remarksurvivalByLT}
therefore imply that for any such
$\mu$ and any function $K \in C_{p} (\zz{R}^{d}; \zz{R}_+)$, there
is a
critical value
$\theta_c(\mu,K)\in[0,\infty]$ so that a spatial epidemic $X$ with
suppression rate $K$ and transmission parameter $\theta$ [see~\eqref{eqDWmgSIRgen}] survives with positive probability if
$\theta>\theta_{c} (\mu,K)$ and with zero probability if $\theta
<\theta_{c} (\mu,K)$.

\begin{prop}\label{propthetaccommon} The critical value
$\theta_c(\mu, K)$ depends only on the dimension $d$ and not on the
choice of $0\neq\mu$ satisfying Assumption \ref{asmtpinireg1} or
$K\in C_{p} (\zz{R}^{d}; \zz{R}_+)$.
\end{prop}

\begin{pf} In this argument $\theta$ will be fixed and $\gamma=1$,
so we suppress the dependence of the laws $P^{\theta,1,\gamma}_{\mu
,K}$ on $\theta$ and $\gamma$. By Theorem \ref{thmmexistuniq}, for any
measure $\mu
\in\mathcal{M}_{c} (\zz{R}^{d})$ satisfying
Assumption \ref{asmtpinireg1} and any two suppression rate
functions $K,K'\in C_{p} (\zz{R}^{d}; \zz{R}_+)$, the laws $P_{\mu,K}$
and $P_{\mu,K'}$ are mutually
absolutely continuous on $\mathcal{F}^{X}_{t}$, with Radon--Nikodym
derivative \eqref{eqRNformulaLKK}. Since $K$ and $K'$ both have
compact support,
inequality \eqref{eqL2} of Proposition \ref{propLocextL1} implies
that the integrals in the likelihood ratio converge, so that
\[
\lim_{t \rightarrow\infty} \biggl(\frac{dP_{\mu,K'}}{dP_{\mu
,K}} \biggr)_{\mathcal{F}_{t}}
:=Y
\]
exists and is positive $P_{\mu,K}$-almost surely. Hence, by Fatou's lemma,
\[
P_{\mu,K'} (X \mbox{ survives})= \lim_{t \rightarrow\infty}P_{\mu,K'}
\bigl(|X_{t}|>0 \bigr) \geq E_{\mu,K} (Y \mathbf{1}_{\{X\ \mathrm{survives}\}} ).
\]
It follows that if $X$ survives with positive $P_{\mu,K}$
probability, then it also survives with positive $P_{\mu,K'}$
probability. Reversing the roles of $K$ and $K'$ shows that the reverse
is also true. Therefore, the critical value $\theta(\mu,K)$ does not
depend on $K$.

To complete the proof, it suffices, in view of the preceding
paragraph, to prove that if $X$ survives with positive probability
under $P_{\mu,0}$, then it survives with positive probability under
$P_{\nu,0}$ for $\nu\neq0$. By the Markov property [Theorem \ref
{thmmexistuniq}(d)],
\[
P_{\mu,0} (X\mbox{ survives})=E_{\mu,0} \bigl(P_{X_{1},L_{1}}
(X \mbox{ survives}) \bigr),
\]
and similarly for $P_{\nu,0}$. By the argument of the preceding paragraph,
\[
P_{X_{1},L_{1}} (X\mbox{ survives})>0 \quad\Longleftrightarrow\quad P_{X_{1},0}(X
\mbox{ survives})>0,
\]
so for both $\omega=\mu$ and $\omega=\nu$,
\[
P_{\omega,0} (X\mbox{ survives})>0\quad \Longleftrightarrow \quad E_{\omega,0}
\bigl(P_{X_{1},0}(X\mbox{ survives}) \bigr)>0.
\]
But the laws of $X_{1}$ under
$P_{\mu,0}$ and $P_{\nu,0}$ are mutually absolutely continuous. (This
can be seen as follows. First, by the absolute continuity results in
\citet{EP91}
[see, e.g., Theorem III.2.2 in \citet{PerkinsSFnotes}], if $P_{\mu}$
and $P_{\nu}$ are the laws of
super-Brownian motions with initial conditions $\mu$ and $\nu$, then
the distributions of $X_{1}$ under $P_{\mu}$ and $P_{\nu}$ are
mutually absolutely continuous. Second, by
Theorem \ref{thmmexistuniq}(a), for any initial measure $\omega$ the
measures $P_{\omega}$ and $P_{\omega,0}$ are mutually absolutely
continuous.) Therefore,
\[
P_{\mu,0} (X\mbox{ survives})>0 \quad\Longleftrightarrow\quad P_{\nu,0} (X
\mbox{ survives})>0.
\]
\upqed\end{pf}

Note that the above arguments do not require $\mu$ to satisfy the
stronger Assumption \ref{asmtpinireg2}, instead just the original
Assumption \ref{asmtpinireg1}.

\subsection{Extinction in dimension one}\label{sec1dext}

\begin{prop}\label{proposition1DExtinction}
If $d=1$, then for {every} $\theta\in\zz{R}$ and every initial measure
$\mu$ that satisfies Assumption \ref{asmtpinireg1}, the solution
$X$ of the martingale problem \eqref{eqDWmgSIR} dies out almost surely.
\end{prop}

\begin{pf}
First, by Proposition \ref{propBRWenvelope}, on some probability
space there
is a version of the process $X$ and a super-Brownian motion
$\ol{X}$ with drift $\theta$ such that
$X_{0}=\ol{X}_{0}=\mu$ and $X_{t}\leq\ol{X}_{t}$ for all
$t\geq0$.

By a result of \citet{Pinsky95}, there is a positive constant
$C=C_{\theta}<\infty$ such that almost surely the support of the
random measure $\ol{X}$ is eventually contained in the interval
$[-Ct,Ct]$. Since $\ol{X}$ dominates $X$, the same is true for $X$.
Now by Lemma \ref{lemmasurvchar}, on the event that $X$ survives, the
total mass of
the measure $X_{t}$ must diverge. Because this mass is
(eventually) contained in $[-Ct,Ct]$, it follows from L'Hospital's
rule that on the event of survival, the
occupation density process $L_{t} (x)$ must satisfy
\[
\frac{1}{t}\int_{-Ct}^{Ct}
L_{t} (x) \,dx = \frac{1}{t}\int_0^t
|X_{u}| \,du \longrightarrow\infty.
\]
Hence, if there is positive probability
of survival, then
\[
\frac{1}{t}E\int_{-Ct}^{Ct}
L_{t} (x) \,dx \longrightarrow\infty.
\]
But this contradicts \eqref{eqL1} in Proposition \ref{propLocextL1}.
\end{pf}

\section{Proof of survival when $d=2$ or $3$}\label{secsurv}

In this section we prove that in dimensions $2$ and $3$, for all
sufficiently large values of the transmission rate $\theta$,
spatial epidemics---that is, solutions of the martingale problem
\eqref{eqDWmgSIR}---survive with positive probability. By
Proposition \ref{propthetaccommon}, the critical value $\theta_c$
for survival in dimensions $d=2,3$ does not depend on the initial
mass distribution $\mu$; hence it suffices to prove that for \emph
{some} finite measure $\mu$,
there is positive probability of survival. The proof
will make use of an auxiliary $3$-dependent \emph{site percolation process}:
this will be constructed in such a way that if
percolation occurs with positive probability, then the epidemic must
survive with positive probability. We will show that by taking $\theta$
sufficiently large, we can make the density of the
site percolation arbitrarily close to $1$. Since percolation occurs
with positive probability in a site percolation process when the
density is near $1$ [see, e.g., Theorem 4.1 of \citet{DurrettLect}], it
will follow that for large values of $\theta$ the epidemic process
will survive with positive probability. We refer the reader to
Chapter~4 of \citet{DurrettLect} for terminology and a general framework
for such comparison arguments.

\subsection{Scaled process}

We assume $d=2$ or 3 throughout this section. Let $X$ be a spatial
epidemic process with transmission rate $\theta$ and initial mass
distribution $\mu$, that is, a solution to the martingale\vadjust{\goodbreak} problem
\eqref{eqDWmgSIR}. It will be convenient to work with a rescaled
version of the spatial epidemic defined as follows: for any
$\theta>0$,
\[
U_t(\psi) = \theta X_{t/\theta} \bigl(\psi(\sqrt{\theta} \cdot)
\bigr)\qquad \mbox{for all } \psi\in C_c^2 \bigl(
\zz{R}^d \bigr).
\]
The effect of this rescaling is described by Lemma
\ref{lemmascaling}: in particular, $U$ satisfies the martingale
problem $(\mathrm{MP})^{1,\beta,1}_{\tilde{\mu},0}$ with
$\beta=\theta^{(d-6)/2}$ and $\tilde{\mu}$ defined by $\int\psi(x)
\,d\tilde{\mu} (x) = \theta\int\psi(\sqrt{\theta}x) \,d\mu(x)$.
For notational ease, we will use the notation
%
%
\begin{equation}
\label{eqbeta} \beta=\beta(\theta)=\theta^{(d-6)/2}
\end{equation}
in this section, and we will
drop the tilde on the initial measure $\mu$. We will show that when
$\theta$ is sufficiently large, for a suitable initial condition
$\mu$, the process $U$ survives with positive probability.

\subsection{Sandwich lemma}\label{secsandwichrescaled}

By Lemma \ref{lemmasandwich0}, a spatial epidemic process can be
bounded below and above by super-Brownian motions with different drift
terms up to the time that its local time density exceeds some
threshold. We now explain how the result of
Lemma \ref{lemmasandwich0} translates to the rescaled processes.

For any $\mu\in\mathcal{M}_c(\zz{R}^d)$ satisfying Assumption
\ref{asmtpinireg2}, and any function $K\in C_p(\zz{R}^d,\zz{R}_+)$,
let $U$ be a solution of the martingale problem
$(\mathrm{MP})_{\mu, K}^{1,\beta,1}$, that is, the spatial epidemic with
transmission rate $1$, branching rate $1$, inhibition parameter
$\beta$, local suppression rate $K$ and initial mass
distribution $\mu$. In addition, for any fixed constant $\kappa>0$,
let $\overline{U}$ and $\underline{U}$ be super-Brownian motions with
drift 1 and
drift $1-\beta\cdot\kappa$, respectively.
Denote by $\ol{M}$ and
$\ul{M}$ the orthogonal martingale measures associated with
$\ol{U}$ and $\ul{U}$, respectively.\looseness=1

\begin{lemma}\label{lemmasandwich} Versions of the processes
$U$, $\overline{U}$ and $\underline{U}$, all with
the same initial condition $\mu$, can be built
on a common probability space in such a way that
\[
\underline{U}_t \le U_t\le\overline{U}_t
\qquad\mbox{for all } t\leq\tau,
\]
where
\[
\tau=\inf \Bigl\{t\geq0\dvtx \Bigl(\max_x K(x) \Bigr) +
\Bigl( \max_x\beta L_t(U,x) \Bigr) \geq\beta
\kappa \Bigr\}.
\]
\end{lemma}
\begin{pf} This follows by first rescaling $U$, $\ol U$ and $\ul U$ as
in Lemma \ref{lemmascaling} so that the $\beta$ parameter becomes $1$,
and the drift parameters (or transmission rates) of $\ul U$ and $\ol U$ become
$\theta(1-\beta\kappa)$ and $\theta$, respectively.
Then one may apply Lemma \ref{lemmasandwich0} to the rescaled
process. Finally undoing the scaling leads to the required conclusion.
\end{pf}

By Lemma \ref{lemmaGirsanov}(a), the law of $\underline{U}$ is
absolutely continuous with respect to that of~$\overline{U}$, and the
likelihood ratio on $\sF_t$ is
%
%
\begin{equation}
\label{eqLRlUuU} \operatorname{ LR}^{\kappa}_t= \exp \biggl\{-\beta
\kappa\ol{M} _t (1) - \frac{\beta^2\kappa
^2}{2}\int_0^t
|\overline{U}_s| \,ds \biggr\}.
\end{equation}

\subsection{Percolation probability estimates}

Recall 
that $Q_r(x)$ denotes
the cube of side length $r$ centered at $x$, and, as before, we abbreviate
$Q(x):=Q_{1}(x)$.
The auxiliary site percolation processes will be constructed by
partitioning the space $\zz{R}^{d}$ into cubes $Q (x)$ of side
length $1$ centered at lattice points $x\in\zz{Z}^{d}$, and then
using the
behavior of the superprocesses in the cube $Q (x)$ to determine
whether the site $x$ will be occupied or not in the auxiliary
percolation process. Roughly, a site $x$ will be occupied if, within
a certain fixed amount of time $T<\infty$, the measure-valued process
$U$ started from a certain initial mass distribution supported by
$Q(x)$ manages to generate a sufficiently large total mass in each of
the adjacent cubes $Q(y)$ while simultaneously not accumulating too
much local time.
The objective of this section is to develop estimates that will
allow us to conclude that if $\theta$ is large (hence $\beta$ is
small), then $x$ is occupied
with high probability.

Define the grid $\Gamma$ to be $\zz{Z}^2_{+}$ when
$d=2$ and $\zz{Z}^2_{+}\times\{0\}$ when $d=3$, where
$\zz{Z}^2_{+}:=\{x=(x_1,x_2)\in\zz{Z}^2\dvtx  x_i\geq0, i=1,2\}$. For
$x,y\in\Gamma$, we say that
%
%
\begin{equation}
\label{eqtotalorder} x\prec y \cases{\mbox{if }\Vert x\Vert _1<
\Vert y\Vert _1 \quad\mbox{or } \vspace*{2pt}
\cr
\Vert x\Vert
_1=\Vert y\Vert _1 \mbox{ and } x_{1} <
y_{1}.}
\end{equation}
Here $\Vert x\Vert _1:=\sum_i |x_i|$ is the $\ell_1$-norm. This defines a
\emph{total} order on $\Gamma$, and so the points of the lattice can
be enumerated as $0=x(1) \prec x(2)\prec
\cdots.$ The notation $x\preceq y$ is understood as $x\prec y$ or
$x=y$. Define $\mathcal{A}(x)$ to be the set of $y\in\Gamma$
such that $x\prec y$ and $\Vert x-y\Vert _1=1$. In other words, when $d=2$, for
any $(x_1,y_1)\in\zz{Z}^2_{+}$,
$\mathcal{A}((x_1,y_1))=\{ (x_1,y_1 + 1), (x_1+1,y_1)\}$; and similarly
for $d=3$. We shall call any $y\in
\mathcal{A}(x)$ an ``immediate offspring'' of $x$, and $x$ an
``immediate predecessor'' of $y$.

Fix $T$ so large that \eqref{eqT} holds.
For any $\vep>0$, let $A(\vep)=A(T,\vep)$ and $r_0(\vep)=r_0(T,\vep)$
be the constants specified in Proposition \ref{propsupersbmreg}. For
any meas\-ure-valued process $X$ with
local time density $L_t(X)$, and for any $x\in\Gamma$, $M>0$,
$\chi>0$, $\lambda>0$ and $\vep>0$, define the following events:
%
%
\begin{equation}
\label{eqsreg} \cases{ F^1(M;X,x)= \bigl\{\operatorname{ Supp}
\bigl(L_T(X) \bigr)\subseteq Q_M(x) \bigr\};
\vspace*{2pt}
\cr
\displaystyle F^2(\chi;X)= \Bigl\{\max_y
L_T(X,y) \leq\chi \Bigr\};\vspace*{2pt}
\cr
F^3(X,x)=
\bigl\{X_T \bigl(Q(y) \bigr)\geq|X_0|, \mbox{ for all }
y\in\mathcal{A}(x) \bigr\};\vspace*{2pt}
\cr
F^{4}(\vep;X) = \bigl\{
X_T \mbox{ is } \bigl(A(\vep/4), |X_0|,
r_0(\vep/4) \bigr)\mbox{-admissible} \bigr\}.}
\end{equation}
Observe that these events
depend on the choice of $T$.
For brevity we will write
\begin{eqnarray*}
\ol{F}^1(M)& =& F^1(M;\ol{U},0),\qquad \ol{F}^2(
\chi) = F^2(\chi;\ol{U}),\qquad \ol{F}^3 = F^3(
\ol{U},0)\quad \mbox{and}\\
 \ol{F}^4(\vep)& =& F^4(\vep;
\ol{U}).
\end{eqnarray*}

Define functions
%
%
\begin{equation}
\label{eqfdtheta} f_d(\theta)=\cases{ \theta^{1/2}, &\quad $ \mbox{
when } d= 2$,
\vspace*{2pt}
\cr
\log\theta,&\quad $\mbox{when } d= 3$}
\end{equation}
and
%
%
\begin{eqnarray}
\label{eqchi-def} \widetilde{M}&=&\wt{M}(M,\theta)={ \bigl[M \sqrt {\log
f_d(\theta)}+1 \bigr]} \quad\mbox{and}
\nonumber
\\[-8pt]
\\[-8pt]
\nonumber
\chi&=&\chi \bigl(A'',
\theta \bigr)=A'' f_d(\theta)
\kappa_d \bigl(f_d(\theta) \bigr),
\end{eqnarray}
where $\kappa_d(\cdot)$ is the function defined in
\eqref{eqltexpmax}.

\begin{lemma} \label{lemmapercbrw}
For any $\vep_0>0$, there
exist positive constants $\theta_0, M$ and $A''$, depending only on $T$
and $\vep_0$, such that if $\theta>
\theta_0$, then for any initial measure $\mu$
supported by $Q(0)$,
of total mass $|\mu|=f_d(\theta)$ and $(A(\vep_0/4), f_d(\theta),
r_0(\vep_0/4))$-admissible,
the super-Brownian motion $\ol{U}$ with drift 1 and initial mass
distribution $\mu$ satisfies
%
%
\begin{equation}
\label{eqopenc} P \bigl( \overline{F}^1(\wt{M}) \cap
\overline{F}^2(\chi) \cap\overline{F}^3 \cap
\overline{F}^{4}(\vep_0) \bigr)\geq1-
\vep_0.
\end{equation}
\end{lemma}
\begin{pf}
This is a direct consequence of Lemma \ref{lemmapropspeed},
Proposition \ref{propsupersbmmax}, Lemma \ref{lemmaregeneration},
and Proposition \ref{propsupersbmreg}. More specifically,
by Lemma \ref{lemmapropspeed},
there exists constant $M=M(T,\vep_0)$ such that
$P(\overline{F}^1(\wt{M}))\geq1- \vep_0/4$.
Moreover, by Proposition \ref{propsupersbmmax} with $\vep=\vep_0/4$,
there exists $A''$ such that $P(\overline{F}^2(\chi))\geq1- \vep_0/4$.
[Note that by Proposition \ref{propsupersbmmax}, $A''$ only depends
on $(T,M,\vep_0,A,r_0)$, and in our case the $M$, $A$ and $r_0$ all
only depend on $(T,\vep_0)$, so ultimately $A''$ only depends on
$(T,\vep_0)$.] Next, by Lemma \ref{lemmaregeneration}, there exists
$\theta_0>0$ such that for any $\theta>\theta_0$,
%
%
\begin{equation}
\label{F3bnd} P \bigl( \overline{F}^3 \bigr)\geq1-
\vep_0/4.
\end{equation}

Finally, by Proposition \ref{propsupersbmreg}, as long as $\theta$
is such that $f_d(\theta)\geq1$,
$P(\overline{F}^{4}(\vep_0))\geq1- \vep_0/4$.
\end{pf}

The next result explains the choice of $f_d(\theta)$.
%
\begin{cor}\label{corpercsandwich}
For any positive constants $\theta, M$ and $A''$, let
$\ul{U}=\ul{U}^{\theta,\kappa}$ be the super-Brownian motion with
drift $1-\beta\kappa$ and initial mass distribution $\mu$, where
%
%
\begin{equation}
\label{eqepsilon3} \kappa= \wt{M}^2\chi= \bigl[M \sqrt{ \log
f_d(\theta)} + 1 \bigr]^2\cdot A''
f_d(\theta)\kappa_d \bigl(f_d(\theta) \bigr).
\end{equation}
Then for any $\vep_0\in(0,1)$, there exists
$\theta_0>0$ such that if $\theta> \theta_0$ and if the initial
condition $\mu$
is supported by $Q(0)$ and
of total mass $|\mu|=f_d(\theta)$,
then
%
%
\begin{equation}
\label{eqopenc} P \bigl( F^3(\ul{U},0) \bigr)\geq1-3
\vep_0/2.
\end{equation}
\end{cor}

\begin{pf} Let $F^{5}(\vep_0)=\{\operatorname{LR}^{\kappa}_T\le1+\vep_0\}$ for
the likelihood ratio $\operatorname{LR}^{\kappa}_t$ defined in~\eqref{eqLRlUuU}.
Using the fact that
$\ol{M}_t(1)=\ul{M}_t(1)-\beta\kappa\int_0^t|\ul{U}_s|\,ds$,
we have
%
%
\begin{eqnarray}
\label{LRUI}
&&E^{\ol{U}} \bigl(\operatorname{LR}^{\kappa}_T
\cdot \indic_{(F^{5}(\vep_0))^c} \bigr)
\nonumber\\
&&\qquad=P^{\ul{U}} \biggl(-\beta\kappa\ol{M}_T(1)-
\bigl( \beta^2\kappa^2 /2 \bigr)\int_0^T|
\underline{U}_s|\,ds\ge\log(1+\vep_0) \biggr)
\\
&&\qquad=P^{\ul{U}} \biggl(-\beta\kappa\ul{M}_T(1)+ \bigl(
\beta^2\kappa^2 /2 \bigr)\int_0^T|
\underline{U}_s|\,ds\ge\log(1+\vep_0) \biggr).\nonumber
\end{eqnarray}
Here and below we use $E^{\ol{U}}$ and $P^{\ol{U}}$ ($E^{\ul{U}}$ and
$P^{\ul{U}}$, resp.) to indicate that the expectation and probability
are taken with respect to the law of $\ol{U}$ (\ul{U}, resp.). Since
for any $s\geq0$,
$E^{\ul{U}}(|\underline{U}_s|) = |\underline{U}_0| e^{(1-\beta
\kappa
)s}$ [see, e.g., equation (5.4) in \citet{Feller51}], we have that
\[
E^{\ul{U}}\int_0^{T} |
\underline{U}_s| \,ds\le f_d(\theta)\int
_0^{T} e^{s} \,ds\leq
C_T f_d(\theta).
\]
This and the definitions of $\beta$ [in \eqref{eqbeta}], $\kappa$
and $f_d$
imply that\break  $E^{\ul{U}} (\beta^2\kappa^2 \int_0^T|\ul{U}_s|
\,ds)=o(1)$ as
$\theta$ goes to infinity. Since $\beta\kappa\underline{M}_T(1)$ has
quadratic variation $\beta^2\kappa^2 \int_0^T|\ul{U}_s| \,ds$, we see
from the above that both terms inside the $P^{\ul{U}}$-probability in
\eqref{LRUI} approach $0$ in probability as $\theta\to\infty$. It
follows that there exists $\theta_0>0$ such that
for any $\theta>\theta_0$,
$E^{\ol{U}}(\operatorname{LR}^{\kappa}_T\cdot\indic_{(F^{5}(\vep_0))^c})<\vep_0$.
Therefore by \eqref{F3bnd}, the complement of the event $ F^3(\ul
{U},0)$ has probability bounded above by
\begin{eqnarray*}
\vep_0+E^{\ol{U}} \bigl(\operatorname{LR}^{\kappa}_T
\cdot\indic_{\{\operatorname{LR}^{\kappa}_T
\le1+\vep_0\}} \cdot\indic_{(\overline{F}^3)^c} \bigr) \le
\vep_0+(1+\vep_0)\cdot\frac{\vep_0}{4}\le
\frac{3\vep_0}{2}.
\end{eqnarray*}
\upqed\end{pf}

Combining the sandwich lemma (Lemma \ref{lemmasandwich}) and the
previous two results we obtain:

\begin{prop}\label{propperc} For any $\vep_0>0$ there exist positive
constants $\theta_0, M, A''$ such that,
for any $\theta> \theta_0$, any initial condition $\mu$
satisfying the hypotheses of Lemma \ref{lemmapercbrw}
and any $K\in C_p(\zz{R}^d,\zz{R}_+)$ such that
\[
K(x) + \beta\chi\cdot\indic_{Q_{\wt{M}}(0)}(x) \leq\beta\kappa \qquad\mbox {for all }
x\in\zz{R}^d,
\]
the process $U$ solving $(\mathrm{MP})_{\mu, K}^{1,\beta,1}$ satisfies
%
%
\begin{equation}
\label{eqperc} P \bigl( F^1(\wt{M};U,0) \cap F^2(
\chi;U) \cap F^{3}(U,0) \cap F^{4}(\vep_0;U)
\bigr)\geq1-3\vep_0,
\end{equation}
where the events $F^i\ (i=1,2,3,4)$ are defined as in
\eqref{eqsreg}
by replacing $X$ with $U$ and
$x$ with $0$.
\end{prop}
\begin{pf}
On the event $\overline{F}^1(\wt{M})\cap
\overline{F}^2(\chi)$,
\[
L_T(\ol{U},x) \leq\chi\cdot\indic_{Q_{\wt{M}}(0)}(x).
\]
Therefore by the assumption on $K$ and Lemma \ref{lemmasandwich},
\[
\ul{U}_t\leq U_t\leq\ol{U}_t \qquad\mbox{for all
} t\leq T \mbox{ on } \overline{F}^1(\wt{M})\cap \overline{F}^2(
\chi).
\]
The required bound now follows from Lemma \ref{lemmapercbrw},
Corollary \ref{corpercsandwich} and an elementary argument.
\end{pf}

\subsection{Proof of survival}
%
\begin{prop}\label{propsurvscaled}
For \emph{some} finite measure $\mu$ and \emph{some} $\theta<\infty$,
if $U$ solves $(\mathrm{MP})_{\mu,0}^{1,\beta,1}$ with
$\beta=\theta^{(6-d)/2}$,
then
\[
P(U \mbox{ survives}) > 0.
\]
\end{prop}

\begin{pf}
Fix a $T$ so that \eqref{eqT} holds. Fix $\varepsilon_{0}>0$ small
enough such that any 3-dependent oriented
site percolation process on $\zz{Z}^2_+$ with density at least
$(1-6\vep_0)$ has positive probability of percolation. For this $\vep
_0$, let $\theta
>\theta_{0}$, where $\theta_{0}$ is as in Proposition \ref{propperc}.
Then choose a measure $\mu$ so that it satisfies the hypotheses of
Lemma \ref{lemmapercbrw} with $\vep_0$ specified as above.
Let $L_t(x)$ denote the local time density of~$U$, and let
$L_\infty(x) =\lim_{t\to\infty} L_t(x)$ for all $x\in\zz{R}^d$.
By
Lemma \ref{lemmapropspeed} and (a scaled version of) Proposition
\ref
{propBRWenvelope}, almost surely,
\[
L_\infty\mbox{ is not compactly supported}\quad \Lto\quad U \mbox{ survives}.
\]
It therefore suffices to show that $L_\infty$ is not compactly
supported with positive probability. To \,do so, we will specify an
algorithm that produces a (random) set $\Omega$ consisting of integer
sites such that:
\begin{longlist}[(ii)]
\item[(i)] $L_\infty(Q(x))>0$ for all $x\in\Omega$;
\item[(ii)]$\Omega$ is infinite with positive probability.
\end{longlist}
The set $\Omega$ will be the connected cluster containing the origin
in a $3$-dependent site percolation process with density
$\geq1-6\varepsilon_{0}$.

Let us first give an overview of the algorithm. Recall that the grid
$\Gamma$ is defined to be $\zz{Z}^2_{+}$ when $d=2$ and
$\zz{Z}^2_{+}\times\{0\}$ when $d=3$. Initially all sites
$x\in\Gamma$ are designated \emph{vacant} (i.e., $\Omega
=\varnothing$). Our algorithm relies on the comparison in Proposition
\ref
{propcompimmigrationseries}. Starting from the origin, following the
total order
$0=x (1)\prec x (2) \prec\cdots$ on $\Gamma$ introduced in
\eqref{eqtotalorder}, we shall define stopping times $\tau_{i}$,
random measures $\mu_{i}, \nu_i$ and
suppression rates $K^*_{i}$. Proposition \ref
{propcompimmigrationseries} allows us to couple $U$ with another
process $U^*$, which, on any time interval between two successive
stopping times, is a usual spatial epidemic process. The set $\Omega$
will be determined by $U^*$. Proposition \ref
{propcompimmigrationseries} ensures that $L_t^U\geq L_t^{U^*}$,
which will be used to ensure property (i). Depending on how $U^*$
behaves for $t\in[\tau_{i-1}, \tau_{i}]$, we may change the status of
site $x=x(i)$ from vacant to \emph{occupied}, and add $x$ to the set
$\Omega$. Roughly speaking, this will be done if and only if the
spatial epidemic $U^*_t$ for $t\in[\tau_{i-1}, \tau_{i}]$ succeeds in
(1) putting enough mass in adjacent cubes at time $\tau_{i}$ and (2)
accumulating only a small
amount of local time. On the event that the status of site
$x$ is changed to occupied, for each successor $y\in\sA(x)$, we will
be able to extract a ``nice'' mass distribution $\mu^y$ in such a
way that if a spatial epidemic is initiated by $\mu^{y}$, then
it will have high probability of making events (1) and (2)
occur, in other words, so that site $y$ will also be added to $\Omega$
with high probability. By keeping this probability above the
percolation threshold we will ensure that the random set $\Omega$
consisting of all the \emph{occupied} sites will be infinite with
positive probability.

We now introduce some notation. In addition to $\theta_0$, assume $M,
A''$ are as in Proposition
\ref{propperc}, so that \eqref{eqperc} holds. In the algorithm, we
will repeatedly use stopping rules $\tau=\tau(Y;\ell;R) $ defined
as follows: for a measure-valued process $Y\in
C([0,\infty);\mathcal{M}_c(\zz{R}^d))$ with local time $L_{t}^Y$, a
threshold $\ell>0$, and a region $R\subseteq\zz{R}^d$,
%
%
\begin{equation}
\label{eqstoppingtau} \tau(Y;\ell;R):= \inf \Bigl\{t \dvtx \max
_{x} L_{t}^Y (x)\geq\ell\mbox{ or }
\operatorname{Supp} \bigl(L_{t}^Y \bigr)\nsubseteq R \Bigr\}
\wedge T.
\end{equation}
We will also repeatedly use the notation $F^1,\ldots, F^4$
as introduced in
\eqref{eqsreg}
to define the so-called ``good events.'' For notational ease, for each
$i=1,2,\ldots,$ associated with site $x(i)$ in the above overview, we
write $U^i_t = U^{*}_{t+\tau_{i-1}}$ for $t\geq0$ (i.e., the process
$U^{*}$ shifted and restricted to $t\geq\tau_{i-1}$), and
%
%
\begin{equation}
\label{eqgoodevent} G^{i}=F^1 \bigl(
\widetilde{M};U^i, x(i) \bigr) \cap F^2 \bigl(
\chi;U^i \bigr) \cap F^{3} \bigl( U^i, x(i)
\bigr) \cap F^{4} \bigl(\vep_0; U^i \bigr).
\end{equation}
The event $G^{i}$ will be called a ``good'' event. In plain
language, ignoring the technical restriction
$F^{4}$, on such a good event, before time $T$,
the spatial epidemic $U^i$ has not accumulated local time density more than
$\chi\cdot\indic_{Q_{\wt{M}}(x(i))}$, and in the meanwhile, at time $T$,
it spreads at least $|U^i_0|$ amount of mass in all the
cubes $Q(y)$ for $y\in\mathcal{A}(x(i))$.

Now we describe our algorithm in detail.
In order to apply Proposition \ref{propcompimmigrationseries}, we
need to define four sequences: random measures $\mu_{i}, \nu_{i}$,
suppression rate functions $K^*_{i}$ and stopping times $\tau_i$. The
random measures $\mu_{i}$ and $\nu_{i}$ will be defined through an
auxiliary random measure sequence $w_i$. The suppression rate functions
$K^*_{i}$ will be deterministic functions as follows: $K^*_{0}\equiv
0$, and for $i\geq1$,
%
%
\begin{equation}
\label{eqK^*} K^*_{i}=\beta\chi\cdot\sum
_{j=1}^{i} \indic_{Q_{\wt{M}}(x(j))}.
\end{equation}
Observe that for each $i$, $K^*_{i}$ is a summation of \emph{moving
windows} and is bounded by $\beta\kappa$ everywhere [recall that
$\kappa
$ is defined in \eqref{eqepsilon3} and note each point is covered by
at most $\wt{M}^2$ cubes of the form $Q_{\wt{M}}(x_j)$ for centers in
our 2-dimensional grid].

We start with site $x(1) = 0$. The $\tau_0$, $\mu_{0}$ and $\nu_{0}$
are all deterministic: $\tau_0=0$, $\mu_{0}=\mu$ and $\nu_{0}=0$. Let
$\tau_1 =\tau(U^1;\chi,Q_{\wt{M}}(x(1)))$.
By Proposition \ref{propperc}, the good event $G^{1}$ occurs with
probability $\geq1-3\vep_0$.
Observe also that $\tau_1>0$ almost surely and $\tau_1 = T$ on $G^{1}$.
If the good event $G^1$ occurs, then we change the status of site $0$
to be occupied.
Further define
%
%
\begin{equation}
\label{eqextract1w}\qquad w_1 =\cases{\displaystyle\sum_{z\in\sA(x(1))}
\frac{|\mu|}{U^*_{\tau_1-}(Q(z))}\cdot U^*_{\tau_1-} \bigl(\cdot\cap{Q(z)} \bigr), &\quad$
\mbox{ if } G^1 \mbox{ occurs}$, \vspace*{2pt}
\cr
0,&\quad $ \mbox{
otherwise}.$}
\end{equation}

We now work with site $y=x(i)$ for $i\geq2$. We proceed according to
whether the site $y$ is an
immediate offspring of some occupied site or not.

\textit{Case} I. Site $y$ is an
immediate offspring of some occupied site. Define
%
%
\begin{equation}
\label{eqextractp} (\mu_{i-1}, \nu_{i-1}) =
\bigl(w_{i-1} \bigl( \cdot\cap{Q(y)} \bigr), w_{i-1} \bigl(
\cdot\cap{Q(y)^c} \bigr) \bigr).
\end{equation}
Then $\mu_{{i-1}}$ is a measure supported by $Q(y)$, of total mass
$|\mu
|= f_d(\theta)$,
and $(A(\vep_0/4), f_d(\theta), r_0(\vep_0/4))$-admissible.
Let
$\tau_i =\tau_{i-1} + \tau(U^{i};\chi,Q_{\wt{M}}(y))$.
By Pro\-position \ref{propperc} (with an apparent spatial translation),
the good event $G^{i}$ occurs with probability $\geq1-3\vep_0$.
Observe also that
$\tau_i - \tau_{i-1} = T$ on $G^{i}$.
If the good event $G^i$ occurs, then we change the status of site $y$
to occupied. Moreover, according to whether $G^i$ occurs or not, we
define $w_{i}$ as follows:
%
%
\begin{equation}
\label{eqextractpw}  w_{{i}} = \cases{\displaystyle \nu_{{i-1}} + \sum
_{z\in\wt{\sA}(y)} \frac{|\mu|}{U^*_{\tau
_i-}(Q(z))} \cdot U^*_{\tau_i-}
\bigl(\cdot\cap{Q(z)} \bigr), &\quad$ \mbox{if } G^i \mbox{ occurs}$,
\vspace*{2pt}
\cr
\nu_{i-1}, &\quad $ \mbox{otherwise},$}\hspace*{-35pt}
\end{equation}
where
\[
\wt{\sA}(y)= \bigl\{z \in\sA(y)\dvtx z\notin\sA(u) \mbox{ for }u\mbox{ which is
occupied and } \prec y \bigr\}.
\]

\textit{Case} II. Site $y$ is not an
immediate offspring of any occupied site. Then we set $(\mu_{i-1}, \nu
_{i-1})=(0,w_{i-1})$,
$\tau_i=\tau_{i-1}$ and $w_{i}=w_{{i-1}}$.

In either case at time $\tau_i$ we proceed to site $x(i+1)$.

It is easy to see that such defined $\mu_i, \nu_i, K_i^*$ and $\tau_i$
satisfy the conditions of Proposition \ref
{propcompimmigrationseries}, and therefore the processes $U$ and
$U^*$ can be coupled such that
\[
L_t^U\geq L_t^{U^*} \qquad\mbox{for
all } t \geq0.
\]
Now if we let $\Omega$ be the set of all occupied sites, then by the
algorithm above, for any $x=x(i)\in\Omega$,
\[
L_\infty^{U^*} \bigl(Q(x) \bigr)\geq L_{\tau_i}^{U^*}
\bigl(Q(x) \bigr) - L_{\tau
_{i-1}}^{U^*} \bigl(Q(x) \bigr) >0,
\]
and hence $\Omega$ satisfies condition (i).\vadjust{\goodbreak}

We now show that $\Omega$ is infinite with positive probability. Define
a site percolation on $\Gamma$ as follows: for each $x\in\Gamma$, if
$x$ is occupied, then we let $\xi(x)=1$ if both
$y\in\mathcal{A}(x)$ are occupied, and $=0$ otherwise; if $x$ is
vacant, then we let $\xi(x)$ be a Bernoulli$(1-6\vep_0)$ random
variable that is independent of everything else.

We know that the origin is occupied with positive probability. We claim that
on the event that the origin is occupied, $\Omega$ contains the
collection of sites reachable from the origin.
We may assume that $\xi(0)=1$ since otherwise we are
done. But when 0 is occupied, $\xi(0)=1$ implies that both
$y\in\mathcal{A}(0)$ are occupied. By induction the conclusion
follows.

It remains to show that the above defined site percolation is a
3-dependent site percolation with density at least $(1-6\vep_0)$,
that is, we need to show that for any $n\geq1$ and any $1\leq
i_1<\cdots<i_n$
such that $\Vert x(i_j)-x(i_k)\Vert _1\geq3$,
\[
P \bigl(\xi \bigl(x(i_j) \bigr)=0\mbox{ for all } j =1,\ldots,n
\bigr) \leq(6\vep_0)^n.
\]
Since when a site $x$ is vacant, $\xi(x)$ is a Bernoulli$(1-6\vep_0)$
random variable independent of everything else, we
need only to show
%
%
\begin{equation}
\label{eq3depocc} P \bigl(\xi \bigl(x(i_j) \bigr)=0\mbox{ for all
} j =1,\ldots,n | \mbox{all } x(i_j) \mbox{'s} \mbox{
are occupied} \bigr)\leq(6\vep_0)^n.\hspace*{-35pt}
\end{equation}

Let us first consider the $n=1$ case. When $x:=x(i_1)$ is occupied, by
construction, each $y\in\mathcal{A}(x)$ is occupied with
probability at least $1-3\vep_0$, hence the probability that both
$y\in\mathcal{A}(x)$ are occupied is at least $1-6\vep_0$.
Equation~\eqref{eq3depocc} follows.

In general, for each $m\geq0$, we define $\sG_m$ to be the $\sigma
$-algebra generated by $\{U^{*}_t\dvtx 0 \leq t\leq \tau_m\}$. Then for
each $i\geq1$, the good event $G^{i}$ is measurable with respect to
$\sG_{i}$, and hence the Bernoulli random variable $\xi(x(i))$ is
measurable with respect to $\sG_{\ell}$ where $\ell$ is the index of
the second $y\in\sA(x(i))$. Now since $x(i_j)$'s are at least distance
3 from each other, if we let $\ell_j$ be the index of the second $y\in
\sA(x(i_j))$, then
\[
\ell_j < \ell_n - 2\qquad \mbox{for all } j<n.
\]
Hence by further conditioning on $\sG_{ \ell_{n}- 2}$, \eqref
{eq3depocc} reduces to the $n=1$ case and hence holds.
\end{pf}

\section{Proof of extinction when $d=2$ or $3$} \label{secext}

As the title suggests we shall assume $d=2$ or $3$ throughout this section.
\subsection{Scaled process}
%
\begin{prop}\label{propextscaled} Suppose $U$ is such that for each
$\psi\in C_c^2(\zz{R}^d)$,
%
%
\begin{eqnarray}
\label{eqnUscaled} U_t(\psi) &=& U_0( \psi) +
\frac{\alpha}{2}\int_0^t U_s(
\Delta\psi) \,ds +\vep\int_0^t
U_s(\psi) \,ds
\nonumber
\\[-8pt]
\\[-8pt]
\nonumber
&&{}-\beta\int_0^t
U_s \bigl( L^U_{t}\cdot\psi \bigr) \,ds +
\sqrt{\gamma} M_t(\psi),
\end{eqnarray}
where $M_t(\psi)$ is a martingale with quadratic
variation $[M(\psi)]_t=\int_0^t U_s(\psi^2) \,ds$. There exist
positive constants $\vep_0$\vadjust{\goodbreak} and $\zeta$ such that if the initial
condition $U_0$ belongs to the class
%
%
\begin{equation}
\mathcal{C}:= \bigl\{\mu\mbox{ satisfying 
{Assumption
\ref{asmtpinireg2}}, } 
\operatorname{ Supp}( \mu)\subseteq Q(0),\mbox{ and
} |\mu|= 2 \bigr\},\hspace*{-35pt}
\end{equation}
and the positive parameters $\alpha, \vep,\beta$ and $\gamma$
satisfy Assumption \ref{asmtpscaled} below, then
\[
P(U \mbox{ dies out}) =1.
\]
\end{prop}

\begin{asmptn}\label{asmtpscaled}
\[
\aligned\vep\leq\frac{\beta}{2\cdot3^d}, \qquad\max \biggl(\vep,\frac
{ \alpha
}{\gamma},
\frac{\sqrt\vep}{\gamma} \biggr)\leq\vep_0\quad\mbox{and} \quad\min \biggl(
\frac{\beta}{\vep^2}, \frac{\beta^2}{\vep^3\gamma}, \frac
{1}{\gamma}, \frac{\beta}{\gamma}
\biggr)\geq\zeta. \endaligned
\]
\end{asmptn}

We denote by $P_{\mu}^{\alpha,\vep,\beta,\gamma}$ the law of $U$
satisfying \eqref{eqnUscaled} with $U_0=\mu\in\mathcal{C}$. Then
we can rephrase the conclusion of Proposition \ref{propextscaled} as
\[
p^{\alpha, \vep,\beta,\gamma} = 0,
\]
where
%
%
\begin{equation}
\label{eqsupprobsurv} p^{\alpha,\vep,\beta,\gamma}:=\sup_{\mu\in\mathcal{C}}
P_{\mu}^{\alpha,\vep,\beta,\gamma}(U \mbox { survives}).
\end{equation}
When there is no confusion about the initial configuration $\mu$,
we omit $\mu$ and write $P^{\alpha,\vep,\beta,\gamma}$ and
sometimes just write $P$. Note that $P^{\alpha,\vep,0,\gamma}$
denotes the law of a Dawson--Watanabe process without any local
time killing, and $P^{\alpha,0,0,\gamma}$ the law of driftless
Dawson--Watanabe process. By (a scaled version of) Proposition \ref
{propBRWenvelope} we see
that when $\beta>0$,
%
%
\begin{equation}
\label{eqbrwenvelope} U^{\alpha,\vep,\beta,\gamma} \lesssim U^{\alpha,\vep,0,\gamma},
\end{equation}
where $U^{\alpha,\vep,\beta,\gamma}$ has law
$P_{\mu}^{\alpha,\vep,\beta,\gamma}$, $U^{\alpha,\vep,0,\gamma
}$ has
law $P_{\mu}^{\alpha,\vep,0,\gamma}$ and the above notation means we
can define versions of these processes on the same space with
$U_t^{\alpha,\vep,\beta,\gamma}\le
U_t^{\alpha,\vep,0,\gamma}$ for all $t\ge0$ almost surely. Furthermore,
by Lemma \ref{lemmaGirsanov}, the laws
$P_{\mu}^{\alpha,\vep,0,\gamma}$ and $P_{\mu}^{\alpha,0,0,\gamma
}$ are
related to each other via the likelihood ratio
%
%
\begin{equation}
\label{eqLRdriftstd} \frac{dP_{\mu}^{\alpha,\vep,0,\gamma
}}{dP_{\mu
}^{\alpha
,0,0,\gamma}}(U) \bigg\rrvert_{\mathcal{F}_t} =\exp \biggl(
\frac{\vep}{\sqrt{\gamma}}M_t(1)- \frac{\vep^2}{2\gamma}\int_0^t
|U_s| \,ds \biggr).
\end{equation}

We introduce the following notation:
\[
V_t=|U_t|,\qquad \tau_3=\inf \bigl\{t\dvtx
\operatorname{Supp} \bigl(L^U_t \bigr)\nsubseteq
Q_3(0) \bigr\},
\]
and for any continuous real valued process $X$ and any
$c\in\zz{R}$, we let $T_c(X)$ be the hitting time
\[
T_c(X)=\inf\{t\dvtx X_t=c\}.
\]
Finally, define $\tau$ to be the first time that $V_t$ hits 0 or
$4$ or that $U_t$ exits $Q_3(0)$, that is,
%
%
\begin{equation}
\label{eqtau} \tau=T_0(V)\wedge T_{4}(V)\wedge
\tau_3.\vadjust{\goodbreak}
\end{equation}

\begin{lemma}\label{lemmataufinite} $\tau<\infty$ almost
surely.\vspace*{-2pt}
\end{lemma}

\begin{prop}\label{propextbeatdiffuse} There exist constants $\vep
_0$ and $\zeta$ such that
if the parameters $\alpha, \vep,\beta$ and $\gamma$ satisfy
Assumption \ref{asmtpscaled}, then
%
%
\begin{equation}
\label{eqsubcond} \sup_{\mu\in\mathcal{C}} P_{\mu}^{\alpha,\vep,\beta,\gamma}(V_\tau>0)<
\frac{1}{2\cdot3^d}:=p_c.\vspace*{-2pt}
\end{equation}
\end{prop}

We will prove these results in the next subsection. Proposition
\ref{propextbeatdiffuse} is analogous to Lemma 2.3.1 in
\citet{MT94}. Once we have the proposition, we can prove
Proposition \ref{propextscaled} by constructing a sub-critical branching
process as in \citet{MT94}, or more directly as follows.\vspace*{-2pt}

\begin{pf*}{Proof of Proposition \ref{propextscaled}} Suppose that
the positive parameters $\alpha, \vep,\beta$ and $\gamma$ satisfy
the assumption of Proposition \ref{propextbeatdiffuse}. Let
\[
r_1:=\frac{\sup_{\mu\in\mathcal{C}} P_{\mu}^{\alpha,\vep,\beta
,\gamma
}(V_\tau
>0)}{p_c} <1.
\]
By the definition \eqref{eqsupprobsurv} of
$p^{\alpha,\vep,\beta,\gamma}$, we can find a $\mu\in\mathcal{C}$
such that
%
%
\begin{equation}
\label{eqsurvsupmu} P_{\mu}^{\alpha,\vep,\beta,\gamma}(U \mbox{ survives})\geq
\frac{1+r_1}{2}p^{\alpha,\vep,\beta
,\gamma}.
\end{equation}

Let $U_t$ satisfy \eqref{eqnUscaled} with $U_0=\mu$. For this
$U$, at time $\tau$, on the event that $V_\tau>0$, $U_\tau$ is
contained in $Q_3(0)$ with total mass no greater than $4$. We can
then decompose it into no more than $2\times3^d$ parts as
\[
U_{\tau} = \sum_{i=1}^\ell
U_\tau^i,\qquad \ell\leq2\times3^d,
\]
each of which has support contained in a unit cube, total mass
at most $2$ and satisfies
Assumption \ref{asmtpinireg2}.
To see this last property, the domination in \eqref{eqbrwenvelope},
the absolute continuity in \eqref{eqLRdriftstd},
and the finite propagation speed of the super-Brownian motion [see,
e.g., Theorem III.1.3 in \citet{PerkinsSFnotes}]
show that it suffices to prove that if $U$ is the super-Brownian motion
with law $P_\mu^{\alpha,0,0,\gamma}$, then
$U_{\tau}$ satisfies
Assumption~\ref{asmtpinireg2}
a.s. The last claim follows directly from Theorem III.3.4. in \citet
{PerkinsSFnotes}.

By the Markov property of the joint process
$(U,L^U)$ [see Theorem \ref{thmmexistuniq}(d)], Lemma \ref
{lemmasurvchar}, and
(a scaled version of) Lemma \ref{lemmacompK},
\[
P_\mu^{\alpha,\vep,\beta,\gamma}(U \mbox{ survives}) \leq E \bigl(
\indic_{(V_\tau>0)}\cdot P_{U_\tau}^{\alpha,\vep
,\beta,\gamma}(U \mbox{ survives}) \bigr).
\]
Here we are ``throwing away'' the killing due to $L^U_\tau$. By
Lemma \ref{lemmacompmu} and translation invariance, the
right-hand side is bounded above by
\begin{eqnarray*}
E \Biggl(\indic_{(V_\tau>0)}\cdot\sum_{i=1}^\ell
P_{U_\tau^i}^{\alpha
,\vep,\beta,\gamma}(U \mbox{ survives}) \Biggr)& \leq&
P(V_\tau>0)\cdot E \Biggl(\sum_{i=1}^\ell
p^{\alpha,\vep,\beta,\gamma} \Biggr) \\
&\leq& r_1 p^{\alpha,\vep
,\beta
,\gamma}.\vadjust{\goodbreak}
\end{eqnarray*}
Combining this with the previous inequality and
\eqref{eqsurvsupmu} we get
\[
\frac{1+r_1}{2}p^{\alpha,\vep,\beta,\gamma}\leq r_1 p^{\alpha
,\vep,\beta
,\gamma},
\]
hence $p^{\alpha,\vep,\beta,\gamma}=0$.
\end{pf*}

\subsection{\texorpdfstring{Proof of Lemma \protect\ref{lemmataufinite} and Proposition \protect\ref{propextbeatdiffuse}}
{Proof of Lemma 6.3 and Proposition 6.4}}
In the arguments below, $U$ is a process satisfying \eqref
{eqnUscaled} with a fixed
initial condition $\mu\in\mathcal{C}$. The bounds in Lemmas~\ref{lemmaprobexit}--\ref{lemmaextY} below hold for \emph{all
$\mu\in\mathcal{C}$}, and hence will lead to the uniform bound in Proposition
\ref{propextbeatdiffuse}.

First we note that $V_t=|U_t|$ satisfies the following SDE for some
Brownian motion $W$:
%
%
\begin{equation}
\label{eqV} dV_t=\vep V_t \,dt -\beta U_t
\bigl(L^U_t \bigr) \,dt + \sqrt{\gamma}
\sqrt{V_t} \,dW_t.
\end{equation}
By an integration by parts,
\[
\beta\int_0^t U_s
\bigl(L^U_s \bigr) \,ds=\frac{\beta}{2}\int
\bigl(L^U_t(x) \bigr)^2 \,dx.
\]
When $t\leq\tau_3$, by Cauchy--Schwarz, we get that
%
%
\begin{equation}
\label{eqkillinglb}\quad  \frac{\beta}{2}\int \bigl(L^U_t(x)
\bigr)^2 \,dx\geq\frac{\beta}{2}\cdot\frac
{1}{3^d} \biggl(
\int L^U_t(x) \,dx \biggr)^2 =
p_c \beta \biggl(\int_0^t
V_s \,ds \biggr)^2.
\end{equation}

We now prove Lemma \ref{lemmataufinite}.
\begin{pf*}{Proof of Lemma \ref{lemmataufinite}} Suppose
otherwise $P(\tau=\infty)>0$, in particular,
$P(\tau_3=\infty)> 0$. By \eqref{eqV} and \eqref{eqkillinglb}, on
the event
$\{\tau_3=\infty\}$,
%
%
\begin{eqnarray}
\label{Vbnd}V_t \leq2 +\vep\int_0^t
V_s \,ds -p_c\beta \biggl(\int_0^t
V_s \,ds \biggr)^2 + \sqrt{\gamma}\int
_0^t \sqrt{V_s}
\,dW_s
\nonumber
\\[-8pt]
\\[-8pt]
\eqntext{\mbox{for all } t\geq0.}
\end{eqnarray}
Define a sequence of stopping times $\{r_i\}$ by $r_0=0$ and for
$i\geq0$,
\[
r_{i+1}=\cases{d r_i+1, &\quad $\mbox{if } V_{r_i+1}
\leq2,$\vspace*{2pt}
\cr
\inf\{t\geq r_i + 1\dvtx V_t = 2
\}, & \quad$\mbox{otherwise.}$}
\]
%
\begin{claim}
For all $i$, $r_i<\infty$, almost surely.
\end{claim}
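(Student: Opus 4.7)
The plan is to prove the claim by induction on $i$, working on the event $\{\tau_3 = \infty\}$ where the estimate \eqref{Vbnd} is available (this is the only event that matters for the outer contradiction). The base case $r_0 = 0$ is trivial. For the induction step, suppose $r_i < \infty$ almost surely. If $V_{r_i + 1} \leq 2$, then by definition $r_{i+1} = r_i + 1 < \infty$ and there is nothing to prove. The nontrivial case is $V_{r_i + 1} > 2$, in which I need to show that $V_t$ returns to the level $2$ at some finite time after $r_i + 1$; by continuity of $V_t$ and the intermediate value theorem, this is equivalent to showing that $\{t \geq r_i + 1 : V_t \leq 2\}$ is nonempty almost surely.

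To produce the required return, I would argue by contradiction. Suppose on some event $E \subseteq \{\tau_3 = \infty\}$ of positive probability one has $V_t > 2$ for all $t \geq r_i + 1$. Then $I_t := \int_0^t V_s\,ds$ satisfies $I_t \geq 2(t - r_i - 1) \to \infty$, while \eqref{Vbnd} yields
\[
V_t \;\leq\; 2 + \vep I_t - p_c\beta I_t^2 + \sqrt{\gamma}\,M_t, \qquad M_t := \int_0^t \sqrt{V_s}\,dW_s.
\]
Since $\langle M\rangle_t = I_t$, the Dambis--Dubins--Schwarz theorem represents $M_t$ as a time change $\widetilde B(I_t)$ of a standard Brownian motion $\widetilde B$. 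The strong law of large numbers for Brownian motion then gives $\widetilde B(s)/s \to 0$ a.s.\ as $s \to \infty$, so $M_t/I_t \to 0$ on $E$.

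Substituting back, the right-hand side of the displayed inequality is asymptotic to $-p_c\beta I_t^2 \to -\infty$, contradicting $V_t > 2$ (indeed even $V_t \geq 0$). Therefore $P(E) = 0$, and $r_{i+1} < \infty$ almost surely on $\{\tau_3 = \infty\}$, completing the induction. The main (and essentially only) obstacle is the handling of the martingale term: the key point is that the quadratic depletion $-p_c\beta I_t^2$ coming from the Cauchy--Schwarz lower bound \eqref{eq:killing_lb} dominates both the linear drift $\vep I_t$ and the fluctuation $\sqrt{\gamma}\,M_t = O(\sqrt{I_t \log\log I_t})$, forcing the right-hand side below zero. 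This same dominance is what will drive the outer contradiction in the remaining part of the proof of Lemma~\ref{lemma:tau_finite}.
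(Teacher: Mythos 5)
Your proof is correct and follows essentially the same route as the paper: on the bad event $\{V_t>2 \text{ for all } t\ge r_i+1\}\cap\{\tau_3=\infty\}$ one has $\int_0^t V_s\,ds\to\infty$, and the quadratic depletion term in \eqref{Vbnd} then forces a contradiction. The only cosmetic difference is the final step: the paper observes that \eqref{Vbnd} would force the continuous local martingale $\sqrt{\gamma}\int_0^t\sqrt{V_s}\,dW_s$ to tend to $+\infty$, an event of probability zero, whereas you reach the same contradiction via Dambis--Dubins--Schwarz and the strong law, showing the martingale term is $o\bigl(\int_0^t V_s\,ds\bigr)$; both are equally valid.
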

Suppose for some $i$, $r_i<r_{i+1}=\infty$. Then $V_t>
2$ for all $t\ge r_i+1$. Therefore \eqref{Vbnd} shows that on
$\{\tau_3=\infty\}$ the continuous martingale
$\sqrt\gamma\int_0^t\sqrt{ V_s} \,dW_s$ approaches $+\infty$ as
$t\to\infty$, an event of probability zero. This proves the claim.

For each $i$ Proposition \ref{propBRWenvelope} allows us to bound
$V_t$ above on $[r_i,r_i+1]$ by a Feller diffusion with drift
$\vep$ and initial value $2$ which does hit 0 in the next one unit
of time with probability $q>0$. This shows $P(V\mbox{ hits 0 on
}[r_i,r_i+1]|\mathcal{F}_{r_i})\ge q>0$, and we therefore conclude
that $V$ will hit 0 almost surely, again a contradiction to our
supposition.
\end{pf*}

Next we prove Proposition \ref{propextbeatdiffuse}. Define a
continuous random time change
$\eta: [0,\int_0^{T_0(V)}V_s \,ds ]\to[0,T_0(V)]$ by
%
%
\begin{equation}
\label{eqalphas} \eta_t =\inf \biggl\{r\dvtx \int
_0^r V_s \,ds = t \biggr\},
\end{equation}
and let $\widetilde{V}_t=V_{\eta_t}$. Then $\widetilde{V}_t$
satisfies
\[
\widetilde{V}_t = 2 + \vep t -\beta\int_0^{\eta_t}U_s
\bigl(L^U_s \bigr) \,ds +\sqrt{\gamma}
B_t \qquad\mbox{for }t\le\int_0^{T_0(V)}V_s
\,ds,
\]
where $B_t=\int_0^{\eta_t} \sqrt{V_s} \,dW_t$ for $t\le
\int_0^{T_0(V)}V_s \,ds$ and may be extended, if necessary, to a
standard Brownian motion. If
%
%
\begin{equation}
\label{eqY} Y_t=2+\vep t - p_c\beta t^2+
\sqrt{\gamma}B_t,
\end{equation}
then
by \eqref{eqkillinglb}
%
%
\begin{equation}
\label{Yubnd} \widetilde{V}_t\le Y_t \qquad\mbox{for }t\le\int
_0^{\tau_3\wedge
T_0(V)}V_s \,ds,
\end{equation}
since the upper bound on $t$ implies $\eta_t\le\tau_3$.

We want to bound $P(V_\tau>0)$ where $\tau$ is defined in
\eqref{eqtau}. Using the comparison above, noting that by Lemma
\ref{lemmataufinite} $\tau<\infty$ \mbox{almost surely}, we get that
\begin{eqnarray}
\label{eqVtauposdec}
\nonumber
&&P(V_\tau>0)
\nonumber\\
&&\qquad\leq P \bigl(\tau_3< T_0(V),
\tau_3<T_{4}(V) \bigr) + P \bigl(\tau=T_{4}(V)
\bigr)
\nonumber\\
&&\qquad\leq P \bigl(\tau_3<1/(4\vep) \bigr) + P \bigl(1/(4\vep)
\leq \tau_3\leq T_0(V) \bigr)
\nonumber
\\[-8pt]
\\[-8pt]
\nonumber
&&\qquad\quad{}+P \bigl(T_{4}
\bigl(V( \cdot\wedge\tau_3) \bigr)< T_{0}(V) \bigr)
\\
&&\qquad\leq P \bigl(\tau_3<1/(4\vep) \bigr) + P \bigl(T_1(V)
\leq1/(8\vep) \mbox{ and } T_0(V)\ge1/(4\vep) \bigr)
\nonumber\\
\nonumber
&&\qquad\quad{} + P \bigl(T_0(Y)> 1/(8\vep) \bigr) +P
\bigl(T_{4}(Y)<T_{0}(Y) \bigr),
\end{eqnarray}
where in the last line we used that
\[
P \bigl(T_1(V)>1/(8\vep), T_0(V)\geq1/(4\vep) \mbox{ and } \tau_3\geq1/(4\vep) \bigr) \leq P \bigl(T_0(Y)>
1/(8\vep) \bigr).
\]
This holds because $V_0=2$, and hence on the event on the left-hand side,
\[
\int_0^{\tau_3\wedge T_0(V)} V_s \,ds\ge\int
_0^{1/(8\vep)}V_s \,ds\ge1/(8\vep),
\]
which implies $\eta_{1/(8\vep)}\leq1/(8\vep)$, and by
\eqref{Yubnd}, for all $t\leq1/(8\vep)$, $Y_t\geq
\widetilde{V}_t=V_{\eta_t}>0$ [since $ T_0(V)\geq1/(4\vep)$].
Proposition \ref{propextbeatdiffuse} will be proved if we can
show that all four probabilities in \eqref{eqVtauposdec}
are small.

\begin{lemma}\label{lemmaprobexit} There exists a constant $C>0$ such
that
\[
P \bigl(\tau_3\leq1/(4\vep) \bigr)\leq C\sqrt{ \frac{\alpha
}{\gamma}}
\exp \biggl(\frac{\sqrt\vep}{8\gamma} \biggr) +2\sqrt\vep\exp \biggl(\frac{2\sqrt\vep}{\gamma}
\biggr).
\]
\end{lemma}
\begin{pf}
By the domination \eqref{eqbrwenvelope}, it suffices to show the
lemma for $P^{\alpha,\vep,0,\gamma}$, which is then analogous to
Lemma 2.1.9 in \citet{MT94} where the conclusion for the $d=1$ case
is proved. We give here a slightly simpler proof for all $d\le3$.

Following \citet{MT94} and using \eqref{eqLRdriftstd}, we get
that
%
%
\begin{eqnarray}
\label{masterbnd}
 &&P^{\alpha,\vep,0,\gamma} \bigl(\tau_3\leq 1/(4\vep )\bigr)\nonumber\hspace*{-30pt}\\
&&\quad\le E^{\alpha,0,0,\gamma} \biggl(\mathbf{1}_{\{\tau_3\leq
1/(4\vep)\wedge T_{\vep^{-1/2}}(V)\}} \cdot\exp
\biggl(\frac{\vep}{\sqrt{\gamma}}M_{\tau_3}(1) - \frac{\vep^2}{2\gamma}
\int_0^{\tau_3} V_s \,ds \biggr) \biggr)\hspace*{-30pt}\\
\nonumber
&&\qquad{}+P^{\alpha,\vep,0,\gamma}\bigl(T_{\vep^{-1/2}}(V)<1/(4\vep) \bigr)\hspace*{-30pt}\\
\nonumber
&&\quad\leq \sqrt{P^{\alpha,0,0,\gamma} \bigl(\tau_3\leq1/(4\vep)\bigr)}\hspace*{-30pt}\\
&&\qquad{}\times\!\sqrt{\!E^{\alpha,0,0,\gamma} \biggl(\!\exp\!
\biggl(\frac{2\vep}{\sqrt{\gamma}}M_{\tau_3}(1) - \frac{\vep^2}{\gamma}\!
\int_0^{\tau_3}\!V_s\,ds\! \biggr)\cdot\mathbf{1}_{\{\tau_3\leq1/(4\vep)
\wedge T_{\vep^{-1/2}}(V)\}} \!\biggr)\!}\hspace*{-30pt}\\
\nonumber
&&\qquad{}+P^{\alpha,\vep,0,\gamma}\bigl(T_{\vep^{-1/2}}(V)<\infty \bigr).\nonumber\hspace*{-30pt}
\end{eqnarray}

A scale function [see, e.g., Proposition VII.3.2 and Exercise
VII.3.20 in \citet{RevuzYor}] for $V$ when $\beta=0$ is given by
$s(x)=\gamma(1-\exp(-2\vep
x/\gamma))/(2\vep)$ and so
\begin{eqnarray}\label{Vhitbnd}
\qquad P^{\alpha,\vep,0,\gamma} \bigl(T_{\vep^{-1/2}}(V)<\infty \bigr)&=&
\frac
{s(2)-s(0)}{s(\vep^{-1/2})-s(0)}
\nonumber
=\frac{1-\exp(-4\vep/\gamma)}{1-\exp(-2\sqrt\vep/\gamma)}
\nonumber
\\[-8pt]
\\[-8pt]
\nonumber
&\le&\frac{4\vep/\gamma}{(2\sqrt\vep/\gamma)\exp
(-2\sqrt
\vep/\gamma)} =2\sqrt\vep\exp(2\sqrt\vep/\gamma).
\end{eqnarray}

We will use Theorem 1 of \citet{Iscoe88} to bound
$P^{\alpha,0,0,\gamma}(\tau_3\leq1/(4\vep))\leq
P^{\alpha,0,0,\gamma}(\tau_3<\infty)$. To do so, we make another
scaling: let
\[
\widetilde{U}_t(\psi)=\frac{U_t (\psi(\sqrt{{2}/{\alpha}} x ) )}{\gamma}\qquad \mbox{for all }\psi\in
C_c^2 \bigl(\zz{R}^d \bigr).
\]
Then by Lemma \ref{lemmascaling}, $\widetilde{U}$ satisfies the
assumptions of Theorem 1 in
\citet{Iscoe88}, and
\[
U_t \bigl(Q_3^c(0) \bigr)>0
\quad\Longleftrightarrow\quad\widetilde{U}_t \bigl(Q_{3\sqrt{{2}/{\alpha}}}^c(0)
\bigr) >0.
\]
Hence by Theorem 1 in \citet{Iscoe88} and the fact that $U_0\in
\mathcal{C}$,
%
%
\begin{equation}
\label{eqprobdiffuse} P^{\alpha,0,0,\gamma}(\tau_3< \infty) \leq
\frac{\widetilde{U}_0 (u ( (({3}/{2})\sqrt{{2}/{\alpha}} )^{-1}\cdot) ) } {
(({3}/{2})\sqrt{{2}/{\alpha}} )^2} \leq Cu \biggl(\frac{2}{3}e_1 \biggr) \cdot
\frac{ \alpha}{\gamma},
\end{equation}
where $e_1$ is a unit vector, and $u(x)$ is the unique positive
(radial) solution of the
singular elliptic boundary value problem
\[
\Delta u(x)=u^2(x), \quad x\in B(0,1) \quad\mbox{and}\quad u(x)\to\infty\qquad\mbox{as
} |x|\to1.
\]

Next denote by $\lambda=\frac{2\vep}{\sqrt{\gamma}}$. Since
$Z(t):=\exp
(\lambda M_t(1)-\frac{\lambda^2}{2}\int_0^tV_s \,ds )$ is a
supermartingale (being a nonnegative local martingale),
%
%
\begin{eqnarray}\label{RN2bnd}
&&E^{\alpha,0,0,\gamma} \biggl(\exp \biggl(\frac{2\vep}{\sqrt {\gamma
}}M_{\tau_3}(1)
- \frac{\vep^2}{\gamma}\int_0^{\tau_3}
V_s \,ds \biggr)\cdot\mathbf{1}_{\{\tau_3\leq1/(4\vep)\wedge
T_{\vep
^{-1/2}}(V)\}} \biggr)
\nonumber\hspace*{-30pt}\\
&&\qquad= E^{\alpha,0,0,\gamma} \biggl(Z \bigl(\tau_3\wedge(4
\vep)^{-1} \bigr)\exp \biggl(\frac{\lambda^2}{4}\!\int_0^{\tau_3}\!V_s
\,ds \biggr)\cdot\mathbf{1}_{\{\tau_3\leq1/(4\vep)\wedge T_{\vep
^{-1/2}}(V)\}}\! \biggr)\hspace*{-30pt}
\nonumber
\\[-8pt]
\\[-8pt]
\nonumber
&&\qquad
\le\exp \biggl(\frac{\lambda^2}{4}\cdot\frac{1}{4\vep
^{3/2}}
\biggr)E^{\alpha,0,0,\gamma}(Z_0)\hspace*{-30pt}
\\
 &&\qquad=\exp \biggl(\frac{\sqrt\vep}{4\gamma} \biggr),\nonumber\hspace*{-30pt}
\end{eqnarray}
where optional sampling is used in the next to last line.
Now insert \eqref{Vhitbnd}, \eqref{eqprobdiffuse} and \eqref
{RN2bnd} into
\eqref{masterbnd} to complete the proof.
\end{pf}

\begin{lemma}\label{lemmaprobext}There exists a constant $C>0$
such that
\[
P \bigl(T_1(V)\leq1/(8\vep) \mbox{ and } T_0(V)\geq1/(4
\vep) \bigr)\leq C\frac{\vep}{\gamma}.
\]
\end{lemma}
\begin{pf}
Recall that $V$ satisfies \eqref{eqV}.
Applying Proposition \ref{propBRWenvelope} again, on
$\{T_1(V)<\infty\}$ we may define an
$\mathcal{F}_{T_1(V)+t}$-adapted solution $\ol{V}$ of
\[
\ol{V}_t=1 + \vep\int_0^t
\ol{V}_s \,ds + \sqrt{\gamma}\int_0^t
\sqrt{\ol{V}_s} \,dW'_s,
\]
where $W'$ is an $\mathcal{F}_{T_1(V)+t}$-Brownian motion and
$\ol{V}_t\geq V_{T_1(V) + t}$ for all $t\geq0$, \mbox{almost surely} on
$\{T_1(V)<\infty\}$. Therefore
\[
P \bigl(T_1(V)\leq1/(8\vep) \mbox{ and } T_0(V)\geq1/(4
\vep) \bigr) \leq P \bigl(T_0(\ol{V})\geq1/(8\vep) \bigr).
\]
By Exercise \mbox{II.5.3.} in \citet{PerkinsSFnotes} the last term
equals
\[
1-\exp \biggl(\frac{-2\vep}{\gamma(1-e^{-1/8})} \biggr) \leq\frac
{2\vep
}{\gamma(1-e^{-1/8})}.
\]
\upqed\end{pf}

Recall that $Y$ is defined in \eqref{eqY}.
%
\begin{lemma}\label{eqextprobY}
There exist constants $C_1,C_2>0$ such that for all
$\beta\vep^{-2}\geq20\mbox{,}000$ and $0<\vep\le 1/4$,
\[
P \bigl(T_0(Y)>1/(8\vep) \bigr)\leq C_1\exp
\biggl(-C_2\frac{\beta^2}{\vep^{3}\gamma} \biggr).
\]
\end{lemma}
\begin{pf} Assume $\beta,\vep$ are as above and recall that
$p_c=1/(2\cdot3^{d})$.
\begin{eqnarray*}
P \bigl(T_0(Y)>1/(8\vep) \bigr)&\leq& P(Y_{1/(8\vep)} >0 )
\\
&=&P \biggl(B_{1/(8\vep)}\geq\frac{1}{\sqrt{\gamma}} \biggl(\frac
{p_c}{64}
\cdot\beta\vep^{-2} - 2-\frac{1}{8} \biggr) \biggr)
\\
&\leq &P \biggl(B_1\geq\frac{\sqrt{8\vep} p_c}{64}\frac{1}{\sqrt {\gamma
}} \bigl(
\beta\vep^{-2} - 10\mbox{,}000 \bigr) \biggr)
\\
&\le& P \biggl(B_1\geq\frac{\sqrt{8\vep} p_c}{128}\frac{1}{\sqrt {\gamma
}} \cdot
\bigl(\beta\vep^{-2} \bigr) \biggr).
\end{eqnarray*}
The result follows.
\end{pf}

\begin{lemma}\label{lemmaextY} There exists $C>0$ such that if $
\vep\leq\min(1/2, p_c \beta)$, then
\[
P \bigl(T_{4}(Y)<T_{0}(Y) \bigr) \leq C\exp \biggl(-
\frac{1}{8\gamma} \biggr) + \exp \biggl(-2p_c\frac{\beta
}{\gamma}
\biggr).
\]
\end{lemma}
\begin{pf}
Recall that $Y$ satisfies
\[
Y_t=2 + \vep t -p_c \beta t^2 + \sqrt{
\gamma} B_t.
\]
Hence if we define $\widetilde{Y}_t$ by
\[
\widetilde{Y}_t= 2 + \vep t + \sqrt{\gamma} B_t,
\]
then $Y_t\leq\widetilde{Y}_t$. Note also that
\[
P \bigl(T_{4}(Y)<T_{0}(Y) \bigr)\leq P
\bigl(T_{3}(\widetilde{Y}) \leq1 \bigr) + P \bigl(T_{3}(
\widetilde{Y}) \geq1, T_{4}(Y)<T_{0}(Y) \bigr).
\]
We first estimate $P(T_{3}(\widetilde{Y}) \leq1)$ as
%
%
\begin{eqnarray}
\label{eqtYexplore} P \bigl(T_{3}(\widetilde{Y}) \leq1 \bigr) &=&P
\Bigl(\max_{t\leq1}(\vep t + \sqrt{\gamma} B_t)\geq1
\Bigr)
\nonumber\\
&\leq& P \biggl(\max_{t\leq1} B_t \geq
\frac{1-\vep}{\sqrt{\gamma}} \biggr)
\\
&\leq& C\exp \biggl(-\frac{1}{8\gamma} \biggr),\nonumber
\end{eqnarray}
provided that $\vep\leq1/2$, where $C>0$ is some constant
independent of $\vep$ and~$\gamma$.

We now work with $P(T_{3}(\widetilde{Y}) \geq1,
T_{4}(Y)<T_{0}(Y))$. Define
\[
\ol{Y}_t = Y_1 - p_c\beta t +\sqrt{\gamma}
\ol{B}_t,
\]
where $\ol{B}_t = B_{t+1}-B_1$. If $\vep\le p_c\beta$ and $t\ge1$,
then
\begin{eqnarray*}
Y_t &=&Y_1+\vep(t-1)-p_c\beta
\bigl(t^2-1 \bigr)+\sqrt\gamma\ol{B}_{t-1}
\\
&=&Y_1+(t-1) \bigl[\vep-p_c\beta(t+1) \bigr]+\sqrt\gamma
\ol{B}_{t-1}
\\
&\le &Y_1+(t-1)[p_c\beta-2p_c\beta]+\sqrt
\gamma\ol{B}_{t-1}
\\
&=&\ol{Y}_{t-1}.
\end{eqnarray*}
Furthermore, since $Y_t\leq\widetilde{Y}_t$, on the event
$\{T_{3}(\widetilde{Y}) \geq1\}$, $Y_1\leq3$ and $T_4(Y)>1$. Therefore
\begin{eqnarray*}
P \bigl(T_{3}(\widetilde{Y}) \geq1, T_{4}(Y)<T_{0}(Y)
\bigr) &\leq P (3- p_c\beta t +\sqrt{\gamma} \ol{B}_t
\mbox{ hits } 4 \mbox{ before } 0 ).
\end{eqnarray*}
The latter probability can be explicitly calculated using scale
functions: if we let
\[
s(x)=\int_0^x \exp \biggl(\int
_0^y \frac{2p_c\beta}{\gamma} \,dz \biggr) \,dy =
\frac{\gamma}{2p_c\beta} \biggl(\exp \biggl(\frac{2p_c\beta
}{\gamma} x \biggr) -1 \biggr),
\]
then
\begin{eqnarray*}
P \biggl(3- \frac{p_c\beta}{2}\cdot t +\sqrt{\gamma} \ol{B}_t
\mbox{ hits } 4 \mbox{ before } 0 \biggr) &=&\frac{s(3)-s(0)}{s(4)-s(0)}=
\frac{\exp(({2p_c\beta}/{\gamma})
\cdot3 ) - 1} {
\exp(({2p_c\beta}/{\gamma})\cdot4 )-1}
\\
&\leq& \exp \biggl(-2p_c\frac{\beta}{\gamma} \biggr).
\end{eqnarray*}
\upqed\end{pf}

\begin{pf*}{Proof of Proposition \ref{propextbeatdiffuse}} The hypotheses
of the above four lemmas are satisfied under
Assumption \ref{asmtpscaled} for small enough $\vep_0$ and large
enough $\zeta$. The bounds obtained in all four lemmas can also be
made as small as we like, again by taking $\vep_0$ small enough
and $\zeta$ large enough. By inserting
these bounds into \eqref{eqVtauposdec}, we obtain Proposition \ref
{propextbeatdiffuse}.
\end{pf*}

\subsection{Proof of extinction for the original equation}
By Proposition \ref{propthetaccommon} and Proposition \ref
{propextscaled},
in order to show extinction for $X$ defined by the original equation
\eqref{eqDWmgSIR}, it suffices to show that when $\theta>0$ is
sufficiently small, there exists a scaling as in Lemma \ref
{lemmascaling} such that the
parameters in the scaled equation satisfy
Assumption \ref{asmtpscaled}. This is the content of the next
lemma.

\begin{lemma}\label{lemmascaleok} For any fixed constants
$0<\vep_0<\zeta$, for all $\theta>0$ sufficiently small, there exist a
scaling of $X$, as in Lemma \ref{lemmascaling} with $K=0$, such that the
parameters in the scaled equation
satisfy Assumption \ref{asmtpscaled}.
\end{lemma}
\begin{pf} By Lemma \ref{lemmascaling}
we want to find positive constants $a, b$ and $c$ such that
%
%
\begin{equation}
\label{eqparrel} \alpha=ab^2, \qquad\vep=a\theta,\qquad \beta=
\frac{a^2b^d}{c}\quad \mbox{and}\quad\gamma= ac
\end{equation}
satisfy Assumption \ref{asmtpscaled}. We will only look at power
functions, that is,
\[
a=\theta^x,\qquad b= \theta^y \quad\mbox{and}\quad c=
\theta^z,
\]
and show that for appropriate (real) choices of $x, y$ and $z$,
Assumption \ref{asmtpscaled} is satisfied provided that $\theta$
is sufficiently small. We have that
\[
\alpha= \theta^{x + 2y},\qquad \vep=\theta^{1+ x},\qquad \beta=
\theta^{2x + dy
- z}\quad \mbox{and}\quad\gamma= \theta^{x+z}.
\]
Looking back at the conditions in Assumption \ref{asmtpscaled},
we see that it is sufficient that
\[
\cases{1+x> 2x + dy-z,\vspace*{2pt}
\cr
1 + x >0,\qquad x+2y-(x+z)>0, \qquad (1+x)/2 - (x +
z)>0,\vspace*{2pt}
\cr
(2x+dy-z)-2(1+x)<0, \qquad 2(2x+dy-z)-3(1+x)-(x+z)<0,
\vspace*{2pt}
\cr
x+z<0, \qquad 2x+dy-z - (x+z)<0, }
\]
that is,
\[
\cases{1+z> x + dy,\vspace*{2pt}
\cr
x >-1, \qquad 2y>z,\qquad 1-x>2z,\vspace*{2pt}
\cr
dy-z<2,\qquad 2\,dy<3z+3,\vspace*{2pt}
\cr
x+z<0, \qquad x+dy<2z. }
\]
There is an abundance of choices, for example, $x= -3/4$ and $y=z=1/2$
will do.
\end{pf}

\section{A strong form of local extinction}\label{seclocalExtinction}

Theorem \ref{thmmlocext} is a direct consequence of
Proposition \ref{proposition1DExtinction} and the following result.

\begin{thmm}\label{theoremlocalExtinction}
Assume that $d=2$ or $d=3$. If the initial mass distribution $\mu$ satisfies
Assumption \ref{asmtpinireg1}, then for any value of $\theta$ the
epidemic $X$ [the
solution to the martingale problem (1.1)] dies out locally, that is,
with probability one, for every compact subset $K\subset\zz{R}^{d}$,
%
%
\begin{equation}
\label{eqlocalExtinction} X_t(K)=0 \qquad\mbox{for large enough }t.
\end{equation}
\end{thmm}

The remainder of this section will be devoted to the proof of this
theorem. Observe at the outset that it suffices to show that the
property \eqref{eqlocalExtinction} holds when $K$ is a ball of radius
$\varrho=\varrho(\theta) >0 $ centered at a point with rational
coordinates, because any compact $K$ is covered by finitely many such
balls. Moreover, it suffices to consider only balls centered at the
origin, because the initial mass distribution $\mu$ can always be
re-centered. Thus, our objective is to prove that the epidemic dies
out in $K=B_{\varrho} (0)$.

\subsection{Re-infection at large times}\label{ssecconfinement}

The proof of Theorem \ref{theoremlocalExtinction} will have three
parts: first, we will show that \eqref{eqlocalExtinction} could fail
only if the ball $B_{\varrho} (0)$ were re-infected from outside the ball
$B_{3\varrho} (0)$ at indefinitely large times. Second, we will show (in
Section~\ref{ssecflux} below) that boundedness
of $EL_{\infty} (B_{3\varrho} (0))$, by Proposition~\ref
{propLocextL1}, implies that the mean mass flux through the sphere of radius
$2 \varrho$ is finite. Finally, we will show (in Section~\ref{ssecoccTimeBlowup}) the
finite total mean mass flux through the sphere of radius $2 \varrho$ will
imply that reinfection of $B_{\varrho} (0)$ from outside $B_{3\varrho}
(0)$ at arbitrarily
large times cannot occur.

To give precise meaning to the notions of ``re-infection from
outside'' and ``mass flux through a boundary'' we must bring in the
\emph{historical process} $H$ associated with the spatial epidemic
$X$. [For a rigorous development of the basic theory, for
Dawson--Watanabe processes without interaction, see \citet{DP91},
for interactive processes including our setting, see \citet{P95}
and for an overview of both, see
\citet{PerkinsSFnotes}.] Recall that for each time
$t$ the state $H_{t}$ is a random measure on the space of continuous
paths $C ([0,t],\zz{R}^{d})$ that projects to $X_{t}$ via the time-$t$
evaluation mapping. As in the above references, for $w\in\mathcal
{C}:=C([0,\infty),\zz{R}^d)$ we
set $w^t(\cdot)=w(\cdot\wedge t)$, and identify $C([0,t],\zz{R}^d)$
with $\{w\in C([0,\infty),\zz{R}^d)\dvtx w=w^t\}$.

Theorem 5.11(a) of \citet{P95} gives a version of Dawson's Girsanov
theorem for historical processes. It is then easy to adapt the proof of
Theorem \ref{thmmexistuniq} to see that Theorem 5.11(a) of \citet{P95}
will apply with the drift function $g$ there equal to $\theta-L^X_s(w_s)$.
This gives a solution $H_t$ to a well-posed historical martingale
problem so that
$X_t(\phi)=\int\phi(w_t)H_t(dw)$ is the unique solution to \eqref
{eqDWmgSIR}. It also shows that the law of $H$ is absolutely
continuous to the law of the historical process associated with
super-Brownian motion on the filtration up to time $t$, for each $t>0$.

For a fixed $\varrho>0$ let
\[
\eta(w)=\eta_\varrho(w)=\inf\bigl\{t\ge0\dvtx |w_t|\ge3\varrho\bigr\}
\]
be the exit time of the path $w$ from the interior of $B_{3\varrho
}(0)$. At time $t$ color the path $(w_s)_{s\le t}$ red if $\eta\le t$,
and otherwise color it blue. This gives a decomposition,
%
%
\begin{equation}
\label{RBdecomp} H_t(\cdot)=H^R_t(
\cdot)+H^B_t(\cdot):= H_t \bigl(\cdot\cap\{
\eta \le t\} \bigr)+H_t \bigl(\cdot\cap\{\eta>t\} \bigr).
\end{equation}
Projecting via the time-$t$
evaluation, we obtain the decomposition
\[
\label{eqredBlueDecomp}
X_{t}(\cdot)=X^{R}_{t}(
\cdot)+X^{B}_{t}(\cdot):= H_t^R(w_t
\in\cdot)+H^B_t(w_t\in\cdot).
\]

\begin{prop}\label{propositionconfinedEpidemics}
For each value $\theta\in
\zz{R}$ there exists $\varrho=\varrho(\theta)>0$ such that for
any initial mass distribution
$\mu$ satisfying Assumption \ref{asmtpinireg1}, the process $H^{B}$
in the
red/blue decomposition \eqref{RBdecomp} will die out with
probability one.
\end{prop}

\begin{pf}
Arguing as in Proposition IV.1.4 of \citet{PerkinsSFnotes}, but using
historical processes, one can construct our historical epidemic process
$H$ and the historical process $\overline H$ for a drift-$\theta$
super-Brownian motion, $\ol{X}$, on a common probability space so that
$H_0=\overline H_0$ and $H_t\le\overline H_t$ for all $t\ge0$. We
decompose $\overline H=\overline{H}^R+\overline{ H}^B$ as in \eqref
{RBdecomp}, thus inducing a corresponding decomposition, $\ol{X}=\ol
{X}^R+\ol{X}^B$. Then $\ol{X}^B$ will be the drift-$\theta$
superprocess associated with Brownian motion killed when it exits the
interior of $B_{3\varrho}(0)$. Therefore if $Q_x$ is Wiener measure
starting at $x$, and $\eta_\varrho$ is also the corresponding exit time
for the Brownian path, then for $t>0$,
%
%
\begin{equation}
E \bigl(\bigl|\ol{X}^B_t\bigr| \bigr)=e^{\theta t}\int
Q_x(t<\eta_\varrho)\,d\mu(x)\le e^{\theta t}|
\mu|Q_0(t<\eta_\varrho).
\end{equation}
[A careful proof of this could use the appropriate version of
Proposition \ref{propositionfreezing}(c) below with $\eta_\varrho$ in
place of $\tau_k$ and $\overline H$ in place of $H$.]
For $\varrho>0$ sufficiently small (how
small will depend on $\theta$), this expectation decays exponentially
with $t$, by elementary estimates on the transition kernel for killed
Brownian motion. [In particular, $3\varrho>0$ must be small enough
that the first eigenvalue of $-\Delta/2$ with Dirichlet boundary
conditions on $\partial B_{3\varrho} (0)$ is strictly greater than
$\theta$.]

It remains to show that the exponential decay of $E|\ol{X}^{B}_{t}|$
implies that $\ol{X}^{B}$ dies out almost surely.
Let $Z$ denote a Feller branching process with drift $\theta$. For
$n\in
\zz{N}$,
the fact that the total mass process $|\ol{X}^B|$ is dominated by the
total mass process without killing
on $\partial B_{3\varrho}(0)$ implies
%
%
\begin{eqnarray}
\label{superext}
P \bigl(\bigl|\ol{X}^B_{n+1}\bigr|>0
\bigr) &\le &E \bigl(P_{\ol{X}^B_n} \bigl(\bigl|\ol{X}_1^B\bigr|>0
\bigr) \bigr)\le E \bigl(P_{|\ol{X}^B_n|}(Z_1>0) \bigr)\nonumber\\
& =&E \biggl[1-
\exp \biggl(\frac{-2\theta|\ol{X}^B_n|}{1-e^{-\theta}} \biggr) \biggr]
\\
&\le& C(\theta)E \bigl(\bigl|\ol{X}^B_n\bigr| \bigr),\nonumber
\end{eqnarray}
where
Exercise II.5.3
of \citet{PerkinsSFnotes} is used in the next to last line. The
exponential decay in the mean on the right-hand side now shows that
$\overline H^B$, and hence the smaller $H^B$, dies out a.s. by a
Borel--Cantelli argument.
\end{pf}

For future reference we state a time shifted version of the above. Let
$T>0$, define
%
%
\begin{equation}
\label{dfnsigmaT} \sigma_T=\inf\bigl\{t\ge T\dvtx |w_t|\ge3
\varrho\bigr\}
\end{equation}
and for $t\ge T$ set
\[
H^{B,T}_t(\cdot)=H_t \bigl(\cdot\cap\{
\sigma_T>t\} \bigr).
\]

\begin{prop}\label{propositionconfinedEpidemicsT}
For $\mu$, $\theta$ and $\varrho(\theta)$ as in Proposition \ref
{propositionconfinedEpidemics}, the process $H^{B,T}$ will die out
with probability one.\vadjust{\goodbreak}
\end{prop}

\begin{pf}
One proceeds just as above but conditional on the past up to $T$,
$\overline H_t, t\ge T$
will be the historical process associated with a drift-$\theta$
super-Brownian motion
starting at $\overline H_T=H_T$.
\end{pf}

\emph{Assume} for the remainder of the proof that $\varrho=\varrho
(\theta)>0$ is small enough that the
conclusions of Propositions \ref{propositionconfinedEpidemics} and
\ref
{propositionconfinedEpidemicsT} hold.
Then for any fixed $T\geq0$, all mass in the spatial epidemic $X_{t}$ will
eventually be descended from the mass in $X_{T}$ outside of
$B_{3\varrho}(0)$.
This obviously implies that
if local extinction
\eqref{eqlocalExtinction} fails for $K=B_{\varrho} (0)$ then the ball
$B_{\varrho} (0)$ must be re-infected by mass from outside
$B_{3\varrho}(0)$ at arbitrarily large times.

\subsection{Finite mass flux}\label{ssecflux}

We will control the re-infections of $B_\varrho(0)$ from outside
$B_{3\varrho}(0)$ by bounding the total ``mass flux'' (to be made
precise below) through $\partial B_{2\varrho}(0)$. For any continuous
path $w$ in $\zz{R}^{d}$ define
$\nu_{0}<\tau_{1}<\nu_{1}<\cdots$ to be the successive times of
passage between the spheres $\partial B_{3\varrho} (0)$ and $\partial
B_{2\varrho}
(0)$ [i.e., $\nu_{0}$ is the first hitting time of $B_{3\varrho} (0)$,
$\tau_{1}$ the first hitting time of $\partial B_{2\varrho} (0)$ after
$\nu_{0}$, and so on].
Now for each $k=1,2,\ldots$ define $H_t^{k}$ to be an
associated historical process in which historical mass frozen at time
$\tau_k$ is collected as $\tau_k$ occurs for $\tau_k<t$. For general
superprocesses these are the historical random measures constructed by
\citet{D91} (Theorem 1.5) using log Laplace equations. We will follow
Theorem 2.23 and Remark 2.25 of \citet{P95} which gives a recipe for
their construction and associated stochastic analysis, using historical
stochastic calculus, and does so in a more general interactive
framework which includes our spatial epidemic processes.

$C^2_b(\zz{R}^d)$ denotes the space of bounded continuous functions on
$\zz{R}^d$ with bounded continuous partials of order $2$ or less.

\begin{prop}\label{propositionfreezing}
For each $k\in\zz{N}$, there is a nondecreasing continuous $\sM(\sC
)$-valued process, $H^k$, and hence an associated random measure on
$[0,\infty)\times\sC$ (also denoted by $H^k$), satisfying $H^k_0=0$
and the following properties:
\begin{longlist}[(a)]
\item[(a)] $w=w^{\tau_k}, \tau_k(w)= t \mbox{ and so }w_t\in
\partial B_{2\varrho}(0), \mbox{for }H^k-a.a.\ (t,w)$ a.s.

\item[(b)] If $\psi$ is a bounded measurable function on $\sC$, then
with probability 1 for all $t\ge0$,
\begin{eqnarray*}
&&\int\psi \bigl(w^{\tau_k} \bigr)\indic_{(t>\tau_k)}H_t(dw)\\
&&\qquad=
\int_0^t\int\psi \bigl(w^{\tau_k}
\bigr) \indic_{(s>\tau_k)} \,dM^H(s,w)
\\
&&\qquad\quad{}+\int_0^t\int\psi \bigl(w^{\tau_k}
\bigr)\indic_{(s>\tau_k)} \bigl[\theta-L^X_s(w_s)
\bigr]H_s(dw)\,ds+H^k_t(\psi),
\end{eqnarray*}
where $M^H$ is the orthogonal martingale measure associated with $H$.

\item[(c)] If $X^k_t(\cdot)=\int_0^t\int\indic_{(w_{\tau_k}\in
\cdot
)}H^k(ds,dw)$ and $\phi\in C_b^2(\zz{R}^d)$, then with probability~1
and for all $t\ge0$,
\begin{eqnarray*}
&&\int\phi(w_t)\indic_{(t>\tau_k)}H_t(dw)\\
&&\qquad=\int
_0^t\int\phi(w_s)
\indic_{(s>\tau_k)} \,dM^H(s,w)
\\
&&\qquad\quad{}+\int_0^t\int\indic_{(s>\tau_k)} \biggl[
\frac{\Delta\phi}{2}(w_s)+\phi(w_s) \bigl(
\theta-L^X_s(w_s) \bigr)
\biggr]H_s(dw)\,ds
\\
&&\qquad\quad{}+X^k_t(\phi).
\end{eqnarray*}

\item[(d)] For any fixed $t\ge0$ and bounded Borel $\psi\dvtx  \sC\to
\zz
{R}$, if
\[
A_n(t,\psi)=\sum_{i=1}^\infty
\indic_{(i2^{-n}<t)}\int\psi \bigl(w^{\tau_k} \bigr) \indic
_{((i-1)2^{-n}\le\tau_k<i2^{-n})} H_{i 2^{-n}}(dw),
\]
then $A_n(t,\psi)\rightarrow H_t^k(\psi)$ in probability as $n\to
\infty
$. If $A_n$ also denotes the measure on $[0,\infty)\times\sC$
associated with $A_n(t+,\psi)$, there is a subsequence $\{n_j\}$ so
that $A_{n_j}|_{[0,T]\times\sC}\to H^k$ in $\sM([0,T]\times\sC)$ for
all $T>0$ a.s.
\end{longlist}
\end{prop}

\begin{pf} The above result is implicit in Remark 2.25 in \citet{P95}
and carried out for the total mass in Theorem 2.23 of the same
reference. We will sketch how the latter construction is easily
extended to the measure-valued process~$H^k$.

Let $\psi\ge0$ be a bounded Borel function on $\sC$ and in the setting
of Theorem 2.23 in \citet{P95}, set
%
%
\begin{equation}
\label{Cdfn} C(t,\omega,w)=\psi \bigl(w^{\tau_k} \bigr)\indic
_{(t>\tau_k(w))}.
\end{equation}
The above setting includes our historical epidemic process with the
function $\hat g$ on page 9
of this reference equal to $\theta-L^X_s(\omega,w_s)$ and the
integrator $Z^0$ on page 12 given by
\[
dZ^0(s,w)=dM^H(s,w)+\theta H_s(dw)
\,ds-L^X_s(w_s)H_s(dw)\,ds.
\]
Therefore for $\psi$ fixed, Theorem 2.23 in \citet{P95} implies (b) and
the first conclusion in (d) for some nondecreasing left-continuous
process $H_t^k(\psi)$ satisfying $H_0^k(\psi)=0$. To derive (c) from
(b) [with $\psi(w)=\phi(w_{\tau_k})$], we need to show that
\begin{eqnarray*}
\int \bigl(\phi(w_t)-\phi(w_{\tau_k}) \bigr)
\indic_{(t>\tau_k)}H_t(dw)&= &\int_0^t
\int \bigl(\phi(w_s)-\phi(w_{\tau_k}) \bigr)
\indic_{(s>\tau_k)} \,dZ^0(s,w)
\\
&&{}+\int_0^t\int\frac{\Delta\phi}{2}(w_s)
\indic_{(s>\tau_k)} H_s(dw)\,ds,
\end{eqnarray*}
and this follows easily from the historical stochastic calculus in
Chapter~2 of \citet{P95}.

Consider next the continuity of $H^k_t(\psi)$ in $t$ for $\psi\ge0$
bounded and Borel. By~(IV.48) of \citet{DM82}, it suffices to show that
if $T_n\downarrow T$ are bounded $(\mathcal{F}_t)$-stopping times, then
\[
\lim_{n\to\infty} E \bigl(H^k_{T_n}(
\psi)-H^k_T(\psi) \bigr)=0.
\]
Arguing as in (2.44) of \citet{P95}, this reduces to showing
%
%
\begin{equation}
\label{cont1} \lim_{n\to\infty}E \bigl(H_{T_n}(T\le
\tau_k<T_n) \bigr)=0
\end{equation}
and
%
%
\begin{equation}
\label{cont2} \lim_{n\to\infty}E \bigl(H_s(
\tau_k=T) \bigr)=0\qquad\mbox{for each }s>0.
\end{equation}
We consider only \eqref{cont1} as the proof of \eqref{cont2} will then
be clear. Using the weak continuity of $H$, one easily sees that
%
%
\begin{eqnarray}
\label{cont3} \limsup_{n\to\infty}E \bigl(H_{T_n}(T\le
\tau_k<T_n) \bigr)&\le& E \bigl(H_T\bigl(|w_T|=2
\varrho,\tau_k\le T\bigr) \bigr)
\nonumber
\\[-8pt]
\\[-8pt]
\nonumber
& \le& E \bigl(H_T\bigl(|w_T|=2
\varrho\bigr)\indic_{(0<T)} \bigr),
\end{eqnarray}
where we used $\tau_k(w)>0$.
Theorem III.5.1 of \citet{PerkinsSFnotes} and our absolute continuity
of $X$ with respect to super-Brownian motion show that
%
%
\begin{eqnarray}
\label{tauknonsconst}&& P \bigl(H_t\bigl(|w_t|=2\varrho\bigr)>0
\mbox{ for some }t>0 \bigr)
\nonumber
\\[-8pt]
\\[-8pt]
\nonumber
&&\qquad\le P \bigl(X_t \bigl(\partial
B_{2\varrho}(0) \bigr)>0\mbox{ for some }t>0 \bigr)=0.
\end{eqnarray}
This implies the right-hand side of \eqref{cont3} is zero, and so
\eqref
{cont1} is proved, thus giving the continuity of $H^k_t(\psi)$ for each
$\psi$ as above.

Next we construct $H^k$ as a measure-valued process. Choose a countable
determining class $\mathcal{D}$ of bounded continuous functions on
$\sC
$ containing the constant $1$. For each $\psi\in\mathcal{D}$ there
is a
subsequence $\{n_j\}$ so that
%
%
\begin{equation}
\label{psiconv} \sup_{t\le T}\bigl |A_{n_j}(t,
\psi)-H^k_t(\psi)\bigr|=0 \qquad\mbox{for all }T>0 \mbox{ a.s.}
\end{equation}
This holds by the first part of (d), monotonicity in $t$ and the a.s.
continuity of the limit.
By diagonalization we assume the same subsequence works for all $\psi
\in
\mathcal{D}$.
It is then easy to check that $A_{n_j}|_{[0,T]\times\sC}\wkc
H^k|_{[0,T]\times\sC}$ as finite measures on $[0,T]\times\sC$ for all
$T>0$. Formally we may use Jakubowski's theorem [Theorem II.4.1 of
\citet{PerkinsSFnotes}] and the fact that the required compact
containment condition follows easily from the modulus of continuity for
the historical paths of super-Brownian motion [Theorem III.1.3 of
\citet
{PerkinsSFnotes}] and the usual absolute continuity argument. Implicit
in the above notation is the fact that the limiting random measure
$H^k$ is related to the processes $H^k(\psi)$ constructed earlier by
\[
\int_0^t\int\psi(w)H^k(ds,dw)=H^k_t(
\psi)\qquad\mbox{for all }t\ge0 \mbox{ a.s.}
\]
This gives the existence of the required process $H^k$ satisfying
properties (b)--(d).

We have $ \tau_k(w)\le t$, $w=w(\cdot\wedge\tau_k)$, and so $w_t\in
\partial B_{2\varrho}(0)$ for $A_n(dt,dw)$-a.s., and taking weak limits
in $n$ we obtain (a) except with $\tau_k\le t$ $H^k$-a.s.
To see that $\tau_k=t$ $H^k$-a.s., it suffices to fix $t\ge
\varepsilon
>0$ and show $H^k((t-\varepsilon,t]\times\{\tau_k<t-\varepsilon\})=0$
a.s. This is easily derived from (b) with $\psi=\indic_{(\tau
_k<t-\varepsilon)}$ and a bit of historical stochastic calculus.
\end{pf}

We may repeat the above construction with minor changes for the
stopping times $\nu_k$ in place of $\tau_k$ and so obtain continuous
nondecreasing $\sM(\sC)$-valued processes $\{\hat H^k\dvtx k\in\zz{N}\}$
and their projections $\{\hat X^k\dvtx k\in\zz{N}\}$ which are $\sM(\zz
{R}^d)$-valued processes supported on $\partial B_{3\varrho}(0)$. We
identify $X^k$ and $\hat X^k$ with the corresponding random measure on
$[0,\infty)\times\zz{R}^d$.

For future reference we state a truncated version of Proposition \ref
{propositionfreezing}(c).
For $t\ge T>0$ define
\[
\hat X^{k,T}(\cdot)=\int_T^t\int
\indic_{(w_{\nu_k}\in\cdot)} \indic_{(T\le\tau_k)}\hat H^k(ds,dw).
\]

\begin{prop}\label{propositionfreezingT} If $T>0$ and $\phi\in
C_b^2(\zz{R}^d)$, then with probability 1 for all $t\ge T$,
%
%
\begin{eqnarray}
\label{tautrunc}&&\int\phi(w_t) \indic_{(T\le\tau_k<t)}
H_t(dw)
\nonumber\\[-2pt]
&&\qquad=\int_T^t\int\indic_{(T\le\tau_k<s)}
\phi(w_s)\,dM^H(s,w)
\nonumber
\\[-8pt]
\\[-8pt]
\nonumber
&&\qquad\quad+\int_T^t\int\indic_{(T\le\tau_k<s)}
\biggl[\frac{\Delta\phi
}{2}+\phi \bigl(\theta-L^X_s
\bigr) \biggr](w_s)H_s(dw)\,ds \\[-2pt]
&&\qquad\quad{}+ \bigl(X_t^k-X_T^k
\bigr) (\phi),\nonumber
\\[-2pt]
\label{nutrunc}&&\int\phi(w_t) \indic_{(T\le\tau_k, \nu
_k<t)}
H_t(dw)
\nonumber\\[-2pt]
&&\qquad=\int_T^t\int\indic_{(T\le\tau_k, \nu_k<s)}
\phi(w_s)\,dM^H(s,w)
\nonumber
\\[-8pt]
\\[-8pt]
\nonumber
&&\qquad\quad{} +\int_T^t\int
\indic_{(T\le\tau_k, \nu_k<s)} \biggl[\frac
{\Delta\phi}{2} +\phi \bigl(
\theta-L^X_s \bigr) \biggr](w_s)H_s(dw)
\,ds\\[-2pt]
&&\qquad\quad{} +\hat X_t^{k,T}(\phi).\nonumber
\end{eqnarray}
\end{prop}

\begin{pf} For \eqref{tautrunc} start with Proposition \ref
{propositionfreezing}(b) with $\psi(w^{\tau_k})=\break  \phi(w_{\tau
_k})\indic
_{(T\le\tau_k)}$, and then proceed as in the derivation of (c) above.
The fact that $\tau_k=s$ for $H^k$-a.a $(s,w)$ is used to get the form
of the final term. The proof of~\eqref{nutrunc} is similar.
\end{pf}

The total flux measure on $[0,\infty)\times\partial B_{2\varrho}(0)$
is $X^\tau=\sum_{k=1}^\infty X^k$, and similarly
we define $X^\nu=\sum_{k=1}^\infty\hat X^k$ on $[0,\infty)\times
B_{3\varrho}(0)$. At present these measures may be infinite.

As was already noted, our plan is to control the re-infections of
$B_\varrho(0)$ from outside $B_{3\varrho}(0)$ by bounding the total
flux, $|X^\tau|$, through $\partial B_{2\varrho}(0)$. We next bound
this flux in $L^1$ as a consequence
of Proposition \ref{propLocextL1} and Proposition \ref
{propositionfreezing}(c) above.

Color a path $w$ yellow at time $t$ if and only if $\tau_k<t\le\nu_k$,
for some $k\ge1$, that is, if and only if at time $t$ $w$ is engaged
in an excursion from $\partial B_{2\varrho}(0)$ to $\partial
B_{3\varrho
}(0)$. Let $H^Y_t$ be the restriction of $H_t$ to the yellow paths at
time $t$, that is,
$H^Y_t(A)=\int1_A(w) [\sum_{k=1}^\infty\indic_{(\tau_k<t\le\nu
_k)} ] H_t(dw)$,
and let $X^Y_t$ be the corresponding time-$t$ projection.

\begin{prop}\label{propositionfiniteflux} $E(|X^\tau|)<\infty$ and
$E(|X^\nu|)<\infty$.
\end{prop}
\begin{pf} We only prove the first conclusion as the proof of the
second is similar.

By differencing the decompositions in Proposition \ref
{propositionfreezing}(c) for times $\tau_k$ and $\nu_k$, we see that
for $\phi\in C_b^2(\zz{R}^d)$,
%
%
\begin{eqnarray}
\label{kdecomp} &&\int\phi(w_t) \indic_{(\tau_k<t\le\nu_k)}
\,dH_t(w)
\nonumber\\
&&\qquad=\int_0^t\int\phi(w_s)
\indic_{(\tau_k<s\le\nu_k)} \,dM^H(s,w)
\nonumber
\\[-8pt]
\\[-8pt]
\nonumber
&&\qquad\quad+\int_0^t\int\indic_{(\tau_k<s\le\nu_k)}
\biggl(\frac{\Delta
\phi}{2}(w_s)+\phi(w_s) \bigl(
\theta-L^X_s(w_s) \bigr)
\biggr)H_s(dw)\,ds\\
&&\qquad\quad{}+X^k_t(\phi)-\hat
X^k_t(\phi).\nonumber
\end{eqnarray}
Let $0\le\phi_0\le1$ be as above with support in the interior of
$B_{3\varrho}(0)$ and so that $\phi_0=1$ on $B_{2\varrho}(0)$. Then
$\hat X_t^k(\phi_0)=0$ and $X^k_t(\phi_0)=|X_t^k|$ for all $k$, $t$.
Take expectations in the above with $\phi=\phi_0$, and then sum over
$k$ to conclude that
\begin{eqnarray*}
E \bigl(X^Y_t(\phi_0) \bigr)&=&E \biggl(\int
_0^t\int\frac{\Delta\phi_0}{2}(w_s)+
\phi_0(w_s) \bigl[\theta-L^X_s(w_s)
\bigr] H^Y_s(dw)\,ds \biggr)\\
&&{}+E\bigl(X^\tau
\bigl([0,t]\times\sC \bigr)\bigr).
\end{eqnarray*}
Rearrange the above, and use $X^Y_t\le X_t$ and then \eqref
{eqDWmgSIR} to see that
\begin{eqnarray*}
&&E \bigl(X^\tau \bigl([0,t]\times\sC \bigr) \bigr)\\
&&\qquad\le E
\bigl(X_t(\phi_0) \bigr)+ E \biggl(\int
_0^tX_s \biggl(
\frac{|\Delta\phi_0|}{2}+\phi_0 \bigl(L^X_s+
\theta^- \bigr) \biggr) \,ds \biggr)
\\
&&\qquad\le\mu(\phi_0)+E \bigl( \bigl\langle L^X_t,|
\Delta\phi_0|+\phi_0 \theta^+ \bigr\rangle \bigr).
\end{eqnarray*}
The right-hand side remains bounded as $t\to\infty$ by
Proposition \ref
{propLocextL1}, and so the result follows.
\end{pf}

\subsection{Local extinction}\label{ssecoccTimeBlowup}

Recall that $\eta(w)=\eta_\varrho(w)$ is the exit time of $w$ from the
interior of $B_{3\varrho}(0)$. For any path $w\in\sC$, if $\eta\le
t$ and
$|w_t|<2\varrho$, then for some $k\ge1$, $\tau_k<t\le\nu_k$. That is,
if you exit from the interior of $B_{3\varrho}(0)$ before time $t$ and
at time $t$ are back in the interior of $B_{2\varrho}(0)$, then $t$
must fall in one of the excursions from $\partial B_{2\rho}(0)$ to
$\partial B_{3\rho}(0)$. Therefore if $\phi_1\in C^\infty_c(\zz{R}^d)$,
takes values in $[0,1]$, has support in $B_{(3/2)\varrho}(0)$, and
$\phi
_1=1$ on $B_\varrho(0)$, and $T>0$, then for all $t\ge T$,
\begin{eqnarray*}
\nonumber
X_t(\phi_1)&=&\int
\phi_1(w_t) \indic_{(t<\eta)} H_t(dw)+
\sum_{k=1}^\infty\int\phi_1(w_t)
\indic_{(\tau_k<t\le\nu_k)} H_t(dw)
\\
&=&X_t^B(\phi_1)+\sum
_{k=1}^\infty\int\phi_1(w_t)
\indic_{(\tau_k<T \le
t\le\nu_k)} H_t(dw)
\\
&&{}+\sum
_{k=1}^\infty\int\phi_1(w_t)
\indic_{(T\le\tau_k<t\le\nu_k)} H_t(dw)
\\
&:=& X_t^B(\phi_1)+{\hat X}^{Y,T}_t(
\phi_1)+X^{Y,T}_t(\phi_1).
\end{eqnarray*}
We have decomposed $X^{Y}$ according to whether or not the $k$th return
to $B_{2\varrho}(0)$ occurs before time $T$ or after it.

We have already shown (Proposition \ref{propositionconfinedEpidemics})
that $X^B$ dies out a.s. Recall the $\sigma_T$ defined in \eqref
{dfnsigmaT}. Clearly $\tau_k<T\le t\le\nu_k$ implies $\sigma_T>t$
for $H_t$-a.a. $w$ for all $t\ge T$ a.s. [recall \eqref
{tauknonsconst}], and so by Proposition \ref{propositionconfinedEpidemicsT},
\[
\hat X_t^{Y,T}(\phi_1)\le\int
\phi_1(w_t)H_t^{B,T}(dw)=0\qquad\mbox{for
large }t \mbox{ a.s.} \mbox{ for each $T>0$. }
\]
Therefore to complete the proof of Theorem \ref
{theoremlocalExtinction} it suffices to show the following:

\begin{prop}\label{propositionflux}$\lim_{T\to\infty
}P(X^{Y,T}_t(\phi
_1)>0 \mbox{ for some }t\ge T)=0$.
\end{prop}
To prove this result, we first recall a standard method used to compute
hitting probabilities for a super-Brownian motion $\overline X$ with
drift $\theta$. For $\lambda>0$ let $U(t,x)=U^\lambda(t,x)$ be the
unique nonnegative solution of
%
%
\begin{equation}
\label{dualeq} \frac{\partial U}{\partial t}=\frac{\Delta
}{2}U_t+\theta
U_t-U^2_t/2+\lambda\phi_1,\qquad
U_0\equiv0,
\end{equation}
which is bounded on $[0,T]\times\zz{R}^d$ for all $T$; for example,
see Theorem II.5.11(b) in \citet{PerkinsSFnotes}. The duality for
superprocesses [e.g., see Theorem II.5.11(c) in \citet{PerkinsSFnotes}]
implies that for all initial measures $\nu$,
\[
E_\nu \biggl(\exp \biggl(-\lambda\int_0^t
\overline X_s(\phi_1) \,ds \biggr) \biggr)=\exp \bigl(-
\nu \bigl(U_t^\lambda \bigr) \bigr).
\]
It follows that $U^\lambda(t,x)$ increases as $\lambda,t\to\infty$
to a
Borel function $U^\infty(x)\ge0$ satisfying
%
%
\begin{equation}
\label{superhitting} P_\nu \bigl(\overline X_s(
\phi_1)>0\mbox{ for some }s\ge0 \bigr)=1-\exp \bigl(-\nu
\bigl(U^\infty \bigr) \bigr).
\end{equation}
Next use the fact that $\overline X$ propagates locally at a finite
rate [see Theorem III.1.3 of \citet{PerkinsSFnotes}] and dies out in
small time with high probability if $|\overline X_0|$ is small [recall
\eqref{superext}], to see that for $\vep$ small,
\[
\sup_{|x|\ge2\varrho}P_{\vep\delta_x} \bigl(\overline
X_s( \phi_1)>0\mbox{ for some }s\ge0 \bigr)\le
\frac{1}{2}.
\]
It therefore follows from \eqref{superhitting} that
%
%
\begin{equation}
\label{Uinfbound} \sup_{|x|\ge2\varrho}U^\infty(x)=C_\varrho<
\infty.
\end{equation}

\begin{pf*}{Proof of Proposition \ref{propositionflux}}
Fix $T>0$. By differencing the decompositions in Proposition \ref
{propositionfreezingT}, we have for $\phi\in C_b^2(\zz{R}^d)$, with
probability 1 for all $t\ge T$,
%
%
\begin{eqnarray}
\label{kTdecomp} &&\int\phi(w_t) \indic_{(T\le\tau_k<t\le\nu_k)}
H_t(dw)
\nonumber\\
&&\qquad=\int_T^t\int\indic_{(T\le\tau_k<s\le\nu_k)}
\phi(w_s) \,dM^H(s,w)
\nonumber
\\[-8pt]
\\[-8pt]
\nonumber
&&\qquad\quad{} +\int_T^t\int
\indic_{(T\le\tau_k<s\le\nu_k)} \biggl[\frac
{\Delta\phi}{2}+\phi \bigl(\theta-L^X_s
\bigr) \biggr](w_s)H_s(dw)\,ds\\
&&\qquad\quad{}+ \bigl[X^k_t-X^k_T
\bigr](\phi)-\hat X_t^{k,T}(\phi).\nonumber
\end{eqnarray}
Fix $u>T$. Arguing as in Proposition II.5.7 of \citet{PerkinsSFnotes}
it is easy to extend~\eqref{kTdecomp} to time-dependent test functions
on $[0,u]\times\zz{R}^d$, including
$V(t,x)=U^\lambda(u-t,x)$; see also Theorem II.5.11(b) of \citet
{PerkinsSFnotes} for the regularity of the above $V$. One gets an
additional term involving $\frac{\partial V}{\partial t}$, and so with
the above choice of $V$, equation
\eqref{dualeq} shows that the function in the square brackets in the
second integral in \eqref{kTdecomp} becomes
\[
\frac{\partial V}{\partial s}+\frac{\Delta V}{2}+\theta V_s-L^X_sV_s=
\frac{V_s^2}{2}-\lambda\phi_1-L^X_sV_s.
\]
Therefore
for $T\le t\le u$,
\begin{eqnarray*}
&&\int V_t(w_t)\indic_{(T\le\tau_k<t\le\nu_k)} H_t(dw)
\\
&&\qquad=\int_T^tV_s(w_s)
\indic_{(T\le\tau_k<s\le\nu_k)}\, dM^H(s,w)\\
&&\quad\qquad{}+\int_T^t
\int\indic_{(T\le\tau_k<s\le\nu_k)}
 \biggl[\frac{V_s^2}{2}-\lambda\phi_1-L^X_sV_s
\biggr](w_s)H_s(dw)\,ds\\
&&\qquad\quad{}+\int_T^t
\int V_s(x) \bigl[X^k-\hat X^{k,T}
\bigr](ds,dx).
\end{eqnarray*}
Rearrange the above and sum over $k$ (using Proposition \ref
{propositionfiniteflux}) to see that
if $X^{\nu,T}=\sum_{k=1}^\infty\hat X^{k,T}_t$, and then
for $T\le t\le u$,
%
%
\begin{eqnarray}
\label{XVdecomp}&&X_t^{Y,T}(V_t)+\int
_T^t\lambda X^{Y,T}_s(
\phi_1)\,ds\nonumber
\\
&&\qquad=M_t^{Y,T}(V)+\int_T^tX^{Y,T}_s
\biggl(\frac
{V_s^2}{2}-L^X_sV_s
\biggr)\,ds\\
&&\qquad\quad{}+\int_T^t\int V_s(x)
\bigl[X^\tau(ds,dx)-X^{\nu,T}(ds,dx) \bigr],\nonumber
\end{eqnarray}
where $M^{Y,T}_t(V)$ is a continuous martingale starting at $0$ at time
$T$ and satisfying
$\langle M^{Y,T}(V)\rangle_t=\int_0^tX^{Y,T}_s(V_s^2)\,ds$.
Using Proposition \ref{propositionfiniteflux} we see the last term is
continuous in $t$, and it then follows easily that each of the terms in
\eqref{XVdecomp} is continuous.
Now apply It\^o's lemma
to $\exp(-X^{Y,T}_t(U^\lambda_{u-t})-\lambda\int_T^tX^{Y,T}_s(\phi
_1)\,ds )$,
and take expectations at $t=u$, where $V_u=U_0=0$
and note that $X_T^{Y,T}\equiv0$
to deduce that
\begin{eqnarray*}
&&E \biggl(1-\exp \biggl(-\lambda\int_T^u
X_s^{Y,T}(\phi_1)\,ds \biggr) \biggr)
\\
&&\qquad=E \biggl(\int_T^u\exp \biggl(-X^{Y,T}_t
\bigl(U^\lambda_{u-t} \bigr)-\lambda\int_T^tX^{Y,T}_s(
\phi_1)\,ds \biggr)\\
&&\qquad\qquad{}\times\int
U^\lambda_{u-t}(x) \bigl[X^\tau(dt,dx)-X^{\nu
,T}(dt,dx)-L^X_t(x)X^{Y, T}_t(dx)
\,dt \bigr] \biggr).
\end{eqnarray*}
Let $u, \lambda\uparrow\infty$, and drop the last two negative terms to
show that
%
%
\begin{eqnarray}
\label{XYsurv} P \biggl(\int_T^\infty
X^{Y,T}_s(\phi_1)\,ds>0 \biggr)&\le &E \biggl(
\int_T^\infty\int U^\infty(x)X^\tau(dt,dx)
\biggr)
\nonumber
\\[-8pt]
\\[-8pt]
\nonumber
&\le& C_\varrho E \bigl(X^\tau \bigl([T,\infty )
\times \zz{R}^d \bigr)\bigr).
\end{eqnarray}
Bound \eqref{Uinfbound} on $U^\infty$ for $|x|\ge2\varrho$ is used in
the last inequality. If we sum \eqref{kTdecomp} over $k$ we may argue
as in the analysis of \eqref{XVdecomp} to see that $X_t^{Y,T}(\phi_1)$
is continuous in $t$. This and the fact that the upper bound in \eqref
{XYsurv} approaches zero as $T\to\infty$ by Proposition \ref
{propositionfiniteflux} imply the required result.
\end{pf*}

\setcounter{equation}{0}

\begin{appendix}\label{app}
\section*{\texorpdfstring{Appendix: Proof of \lowercase{\protect\eqref{hypsmoothness}} for $\lowercase{d}=2,3$}
{Appendix: Proof of (2.17) for $d=2,3$}} The
main step is to show that for any fixed $t>0$,
\[
\lim_{\vep\to0} \limsup_N\hspace*{-1pt} \max
_{x\in\zz{Z}^d/\sqrt{N^\alpha\sigma^2}} \frac{ \sum_{i=1}^{[N^\alpha
\cdot|\mu|]} \indic_{|x-X_i|\leq\vep}  G_{[N^{\alpha}t]}(x\sqrt{N^\alpha\sigma^2}-[X_i\sqrt{N^\alpha
\sigma
^2}])}{{N^{\alpha(2-d/2)}}} = 0.
\]
The result would then follow easily by using the monotonicity in $t$,
the SLLN and the local central limit theorems.
Using inequality (19) in Lemma 2 of \citet{lz10} one can show that
\[
G_{[N^{\alpha}t]} (y) \leq C_1 N^{\alpha(1-d/2)} q^N
\bigl(y/\sqrt{N^{\alpha}\sigma^2} \bigr)\qquad \mbox{for all } N
\mbox{ and all } y\in\zz{Z}^d,
\]
where
$q^N(x)=\int_{1/(bN^{\alpha})}^{t/b} p_s(x)  \,ds$ for $x\in\zz{R}^d$,
and $b>0$ and $C_1=C_1(b)>0$ are both constants. Hence it suffices to show
\[
\lim_{\vep\to0} \limsup_N \max
_{x\in\zz{Z}^d/\sqrt{N^\alpha\sigma^2}} \frac{ \sum_{i=1}^{[N^\alpha
\cdot|\mu|]} \indic_{|x-X_i|\leq\vep} q^N(x-[X_i\sqrt{N^\alpha\sigma^2}]/\sqrt{N^{\alpha}\sigma
^2})}{N^{\alpha
}} =0.
\]
Let $h(r)=1/r$ when $d=3$, and $h(r)=\log(1/r)$ when $d=2$. Routine
calculations show that there exists a constant $C_2>0$ such that for
all $0\neq|x|$ small and for all $N$ sufficiently large,
%
%
\begin{equation}
\label{eqgbd} 1/C_2 h\bigl(|x|\bigr)\leq q^N(x) \leq
C_2 \bigl( h\bigl(|x|\bigr)\wedge h \bigl(N^{-\alpha/2} \bigr) \bigr).
\end{equation}
It follows that there exists $C_3>0$ such that for all $\vep>0$ small
enough, for all $N$ large enough, for all $ |z|\leq2\vep$,
%
%
\begin{equation}
\label{gslowvary} q^N(z) \leq C_3 q^N (v)\qquad
\mbox{for all } |v-z|\leq1/\sqrt{N^{\alpha
} \sigma^2}.
\end{equation}
Combining this with bound \eqref{eqrounddist}, we see that it
suffices to show
%
%
\begin{eqnarray}
\label{eqmaxfinite} \lim_{\vep\to0} \limsup_N
\sup_{x\in\zz{R}^d} Q_\vep^N(x)=0,
\nonumber
\\[-8pt]
\\[-8pt]
\eqntext{\mbox{where }
Q_\vep^N(x):= \frac{ \sum_{i=1}^{[N^\alpha\cdot
|\mu|]}
\indic_{|x-X_i|\leq\vep}  q^N(x-X_i)}{N^{\alpha}}.}
\end{eqnarray}

Next, for each $j=1,\ldots,[N^\alpha|\mu|]$, let
\[
\hat{Q}_\vep^N(j)=N^{-\alpha}\sum
_{i=1,i\neq j}^{[N^\alpha|\mu|]}\indic_{|X_i-X_j|\le\vep}h\bigl(|X_i-X_j|\bigr)
\wedge h \bigl(N^{-\alpha/2} \bigr).
\]

\begin{lemmas}\label{hatQ} There is a $C_4$ so that for all $\vep$ small
enough and all $N$ large enough,
\[
\sup_{x\in\bR^d}Q_\vep^N(x)\le
C_4 \biggl[\max_{j\le[N^\alpha|\mu|]}\hat Q_{2\vep}^N(j)+
\frac{h(N^{-\alpha/2})}{N^{\alpha}} \biggr].
\]
\end{lemmas}
\begin{pf} The upper bound in \eqref{eqgbd} shows that for $\vep$
small enough and $N$ large enough (which is assumed in the rest of this proof),
\[
Q^N_\vep(x)\le C_2N^{-\alpha}\sum
_{i=1}^{[N^\alpha|\mu|]}\indic_{|x-X_i|\le\vep}h\bigl(|x-X_i|\bigr)
\wedge h \bigl(N^{-\alpha/2} \bigr).
\]
Fix $x\in\bR^d$ and choose $j\in\{1,\dots,[N^\alpha|\mu|]\}$ so that
$|X_j-x|=\break  \min_{1\le i\le[N^\alpha|\mu|]}|X_i-x|$. Then
\[
|X_i-X_j|\le|X_i-x|+|x-X_j|
\le2|X_i-x|,
\]
and so $|x-X_i|\le\vep$ implies $|X_i-X_j|\le2\vep$, and therefore,
%
%
\begin{eqnarray}
\label{hatQbnd1}&&Q_\vep^N(x)
\nonumber
\\[-8pt]
\\[-8pt]
\nonumber
&&\qquad
\le C_2N^{-\alpha}
\sum_{i=1}^{[N^\alpha|\mu|]}\indic_{|X_i-X_j|\le2\vep}
\bigl[h\bigl(|X_i-X_j|/2\bigr)\wedge h \bigl(N^{-\alpha/2}
\bigr) \bigr].
\end{eqnarray}
We may assume $\vep\le1/4$. It follows that in the above summation
$h(|X_i-X_j|/2)\le2 h(|X_i-X_j|)$ for $d=2$ and this is obvious for
$d=3$. Therefore by~\eqref{hatQbnd1},
\[
Q^N_\vep(x)\le2C_2 \biggl[\hat
Q_{2\vep}^N(j)+\frac{h(N^{-\alpha
/2})}{N^{\alpha}} \biggr],
\]
where we have separated out the $i=j$ term in the summation on the
right-hand side of \eqref{hatQbnd1}. The result follows with $C_4=2C_2$
upon taking the max over $j$ on the right.
\end{pf}

Therefore to show \eqref{eqmaxfinite}, it suffices to establish
%
%
\begin{equation}
\label{eqhatQbnd2} \lim_{\vep\to0}\limsup_N
\max_{1\le j\le[N^\alpha|\mu|]}\hat Q_\vep^N(j)=0.
\end{equation}

Let $C_\mu f_d(r)$ denote the function arising on the right-hand side
of \eqref{conc}. Let $r_n=2^{-n}$, and define
\[
M_{n,j}=\#\bigl\{i\neq j\dvtx |X_i-X_j|\le
r_n\bigr\}.\vadjust{\goodbreak}
\]
If $K_N=[N^\alpha|\mu|]-1$ and
\[
p_n(x)=P\bigl(|X_1-x|\le r_n\bigr)\le C
f_d(r_n)
\]
[by \eqref{conc}], then conditional on $X_j$, $M_{n,j}$ is
$\operatorname{binomial}(K_N,p_n(X_j))$. Therefore a square function inequality for
martingales [see Theorem 21.1 in \citet{B73}] implies that for any $q>0$
there is a $C'_q$ so that
%
%
\begin{eqnarray}
\label{qmom} &&E \bigl(\bigl|M_{n,j}-K_Np_n(X_j)\bigr|^q
| X_j \bigr)\nonumber\\
&&\qquad\le C'_q \bigl(
\bigl(K_Np_n(X_j) \bigl(1-p_n(X_j)
\bigr) \bigr)^{q/2}+1 \bigr)
\\
&&\qquad\le C_q \bigl(N^{\alpha
q/2}f_d(r_n)^{q/2}+1
\bigr).\nonumber
\end{eqnarray}
Choose $n_0$ so that $r_{n_0}\le N^{-\alpha/2}<r_{n_0-1}$, and define
$\Lambda_N$ to be the complement of
\[
\bigcup_{n=1}^{n_0} \Bigl\{\max
_{j\le[N^\alpha|\mu
|]}\bigl|M_{n,j}-K_Np_n(X_j)\bigr|>N^\alpha
f_d(r_n) \Bigr\}.
\]

Use  \eqref{qmom} and Markov's inequality, and then $f_d(r)^{-1}\le
c(\log(1/r))^3r^{-1}$ for $r\in(0,1/2]$ to see that
\begin{eqnarray*}
P \bigl(\Lambda_N^c \bigr)&\le& c\sum
_{n=1}^{n_0}N^\alpha|\mu| \cdot
N^{-q\alpha
}f_d(r_n)^{-q}
\bigl(N^{\alpha q/2}f_d(r_n)^{q/2}+1 \bigr)
\\[-1pt]
&\le& cN^{\alpha(1-q)}\sum_{n=1}^{n_0}
\bigl(N^{\alpha
q/2}f_d(r_n)^{-q/2}+f_d(r_n)^{-q}
\bigr)
\\[-1pt]
&\le& c N^{\alpha(1-q)}\sum_{n=1}^{n_0}
\bigl(N^{\alpha
q/2}r_n^{-q/2} \bigl(
\log(1/r_n) \bigr)^{3q/2}+r_n^{-q}
\bigl(\log(1/r_n) \bigr)^{3q} \bigr).
\end{eqnarray*}
Recalling the choice of $n_0$ we can bound the above by
\begin{eqnarray*}
&&cN^{\alpha(1-q)} \bigl(\log(1/r_{n_0}) \bigr)^{3q}
\bigl(N^{\alpha
q/2}2^{n_0q/2}+2^{n_0q} \bigr) \\[-1pt]
&&\qquad\le
cN^{\alpha(1-q)}(\log N)^{3q} \bigl(N^{3\alpha q/4}+N^{\alpha
q/2}
\bigr)
\\[-1pt]
&&\qquad\le cN^{\alpha(1-(q/4))}(\log N)^{3q}.
\end{eqnarray*}
So choose $q$ large enough so that $\alpha(1-(q/4))<-2$ to conclude
that for all $N$ large enough,
%
%
\begin{equation}
\label{Plambdabnd} P \bigl(\Lambda_N^c \bigr)\le
CN^{-2}.
\end{equation}

Now for $\frac{1}{2}> \vep\ge N^{-\alpha/2}$ choose $n_1\in\{
2,\dots
,n_0\}$ so that $2^{-n_1}\le\vep<2^{1-n_1}$. On $\Lambda_N$, for
$1\le
j\le[N^\alpha|\mu|]$ we have [use $h(r)f_d(r)=C_\mu\log(1/r)^{-2}$]\vspace*{-1pt}
%
%
\begin{eqnarray}\label{hatQbound3}
\nonumber
\hat Q_\vep^N(j) &=&N^{-\alpha}\sum
_{i=1,i\neq j}^{[N^\alpha|\mu|]}\indic_{|X_i-X_j|\le
\vep}
\bigl(h\bigl(|X_i-X_j|\bigr)\wedge h \bigl(N^{-\alpha/2}
\bigr) \bigr)
\\[-1pt]
\nonumber
&\le& c \Biggl(\frac{M_{n_0,j}}{N^\alpha}h \bigl(N^{-\alpha/2} \bigr)+\sum
_{n=n_1-1}^{n_0-1}\frac{M_{n,j}}{N^\alpha}h(r_{n+1})
\Biggr)
\\[-1pt]
\nonumber
&\le& c \Biggl(\frac{K_N}{N^\alpha}p_{n_0}(X_j)h
\bigl(N^{-\alpha
/2} \bigr)+f_d(r_{n_0})h
\bigl(N^{-\alpha/2} \bigr)
\\[-1pt]
&&\hspace*{9pt}{} +\sum_{n=n_1-1}^{n_0-1} \biggl(
\frac{K_N}{N^\alpha
}p_n(X_j)h(r_n)+f_d(r_n)h(r_n)
\biggr) \Biggr)
\\[-1pt]
\nonumber
&\le& c \Biggl(f_d(r_{n_0})h(r_{n_0})+
\sum_{n=n_1-1}^{n_0-1}f_d(r_n)h(r_n)
\Biggr)
\\[-1pt]
\nonumber
&\le& c\sum_{n=n_1-1}^{n_0}(
\log1/r_n)^{-2}
\\[-1pt]
&\le& cn_1^{-1}\le c(\log1/
\vep)^{-1}.\nonumber
\end{eqnarray}
By \eqref{Plambdabnd}, the Borel--Cantelli lemma and \eqref
{hatQbound3}, we conclude that with probability~$1$ for all $N$ large
enough, we have
\[
\max_{1\le j\le[N^\alpha|\mu|]}\hat Q_\vep^N(j)\le c(\log1/
\vep)^{-1}\qquad\mbox{for }N^{-\alpha/2}\le\vep< 1/2.
\]
This implies \eqref{eqhatQbnd2}, and we are done.
\end{appendix}

\section*{Acknowledgments} We are grateful to the
Associate Editor and referee for their very careful reading of the
paper and constructive suggestions.

%
%



\printaddresses

\end{document}